\documentclass[11pt]{article}
\usepackage{amsmath, amsthm, amssymb, mathtools}
\usepackage{bm, soul}
\usepackage{graphicx}
\usepackage[showzone=false]{datetime2}
\usepackage{fancyhdr}
\usepackage[table]{xcolor}
\usepackage{optidef}
\usepackage{hyperref}
\usepackage{geometry}
\usepackage{booktabs}
\usepackage{algorithm} 
\usepackage{algpseudocode}
\usepackage{tikz}
\usetikzlibrary{positioning,shapes,shapes.geometric,arrows.meta,calc,math,trees,decorations,decorations.pathmorphing,decorations.markings,fadings,spy,graphs,automata}
\usepackage[backend=biber, style=numeric, maxnames=4, urldate=long, url=true, doi=false]{biblatex}
\usepackage{listings}

\makeatletter
\renewcommand\paragraph{%
 \@startsection {paragraph}{4}{\z@ }{3.25ex \@plus 1ex
 \@minus .2ex}{-1em}{\normalfont \normalsize \bfseries }}%
\renewcommand\subparagraph{%
 \@startsection {subparagraph}{5}{\z@ }{3.25ex \@plus 1ex
 \@minus .2ex}{-1em}{\normalfont \normalsize \itshape }}%
\makeatother

\newtheorem{theorem}{Theorem}

\newtheorem{proposition}[theorem]{Proposition}
\newtheorem{lemma}[theorem]{Lemma}

\newcommand{\E}{\mathbb E}

\newcommand{\Var}{\mathrm{Var}}
\newcommand{\avg}{\mathrm{avg}}
\newcommand{\diag}{\mathrm{diag}}

\newcommand{\Markov}{\mathrm{Markov}}
\newcommand{\R}{\mathbb R}

\newcommand{\mdp}{\mathrm{mdp}}
\newcommand{\qdp}{\mathrm{qdp}}
\newcommand{\extract}{\mathrm{extract}}
\newcommand{\exact}{\mathrm{exact}}
\newcommand{\qplex}{\mathrm{qplex}}

\newcommand{\AAA}{\mathcal A}

\newcommand{\PPP}{\mathcal P}

\newcommand{\SSS}{\mathcal S}
\newcommand{\LLL}{\mathcal L}
\newcommand{\ZZZ}{\mathcal Z}
\newcommand{\UUU}{\mathcal U}

\newcommand{\XXX}{\mathcal X}

\newcommand{\ttheta}{\bm \theta}
\newcommand{\TTheta}{\bm \Theta}

\newcommand{\ggamma}{\bm \gamma}
\newcommand{\GGamma}{\bm \Gamma}

\let\oldSS\SS
\DeclareRobustCommand\SS{\ifmmode\bm{S}\else\oldSS\fi}

\newcommand{\old}{\textup{old}}
\newcommand{\new}{\textup{new}}

\newcommand{\bin}{\textup{bin}}
\newcommand{\ctrd}{\textup{ctrd}}

\DeclarePairedDelimiterX{\infdivx}[2]{(}{)}{%
  #1\;\delimsize\|\;#2%
}
\newcommand{\KL}{\mathrm{KL}\infdivx}

\DeclareMathOperator*{\argmax}{arg\,max}

\title{QPLEX Decision Processes: Formulation via Nonlinear Markov Chains and Optimization via Policy Gradients}
\author{Antonius B.\ Dieker, Steven T.\ Hackman, Zitong Wang, Yunhao Yan}
\date{}

\begin{document}
\maketitle

\begin{abstract}
We introduce a QPLEX Decision Process (QDP) as a model for dynamic control of queueing systems with 
non-stationary arrivals, general service distributions, and service-level chance constraints.
QDPs integrate QPLEX, a computational modeling methodology for transient analysis of stochastic systems,
into a nonlinear Markov decision framework. 
Since QPLEX approximations use nonlinear transition probabilities with orders-of-magnitude smaller state spaces, QDPs circumvent the curse of dimensionality associated with general service times.
Via forward and backward iterative schemes, we can rapidly compute gradients 
deterministically on the much smaller state space, eliminating sampling variance.
We further address optimization through natural-gradient-inspired methods with block-diagonal Fisher approximations. 
To illustrate the QDP methodology, we formulate a single-station dynamic pricing problem with non-stationary demand as a QDP.
When the reward structure uses waiting and terminal costs, our approach can find near-optimal policies in seconds on a single CPU; when the reward structure uses penalties for deviating from service-level chance constraints, the optimization landscape is substantially more challenging yet our approach
can find a high-quality, practical policy in approximately a minute on a single CPU. 

\end{abstract}

\section{Introduction}
\label{sec:introduction}

This paper addresses dynamic control of queueing systems
that exhibit challenging characteristics, including
non-stationary arrivals, general (e.g., empirical) service distributions,
periods of significant overload,
and service-level chance constraints (e.g., bounded wait times with high probability).  
When tackling such complex systems, existing methods face practical difficulties.
Markov decision process (MDP) formulations often impose
restrictive assumptions such as stationary parameters
and exponentially distributed service durations to achieve tractability;
moreover, handling chance constraints is problematical in this framework.
Sampling-based alternatives such as reinforcement learning and
simulation-based optimization avoid explicit state enumeration but
suffer from slow convergence, high variance of gradient  estimators (especially in the presence of chance constraints), sensitivity to hyperparameters, and highly suboptimal solutions.

In this paper we propose
a fundamentally different approach that leverages QPLEX \cite{qplexbook}, a recently developed computational modeling and analysis methodology for (discrete-time, discrete-state) 
nonstationary stochastic systems.
QPLEX circumvents the curse of dimensionality associated with general service times
by reimagining information requirements to enable an orders-of-magnitude reduction in state space through conditional independence assumptions. 
This yields a deterministic forward iterative scheme that quickly and accurately generates approximate \emph{transient} distributions 
of performance metrics like the number of customers at stations of a queueing network. 
Since no sampling is involved, this iterative scheme acts as a \emph{deterministic} simulation.

At the core of the QPLEX forward iterative scheme lies a  \emph{nonlinear} Markov chain on the smaller state space,
meaning that  transition mechanism is both \emph{time-dependent} and \emph{distribution-dependent}: the transition probabilities from time $t$ to $t+1$
depend on the decision epoch $t$ and on the state distribution at time $t$.
This paper integrates QPLEX into  \emph{nonlinear} MDPs, a sequential decision-making framework built on nonlinear Markov chains.
A nonlinear MDP uses states, actions, rewards, and transition probabilities just like an MDP, except that the reward functions at time $t$ and the transition
probabilities from time $t$ to $t+1$ not only depend on the state and action chosen but also depend on the state distribution at time $t$.
A \emph{QPLEX Decision Process (QDP)} is a  nonlinear MDP that uses QPLEX-based transition probabilities.  
Any of the QPLEX models described in \cite{qplexbook} can be enriched with actions and rewards to obtain a QDP.  (The QPLEX book provides a foundation for formulating custom models, too.)

As transition probabilities and reward functions become both time-dependent and distribution-dependent,
a change in the policy at time $t$ creates a chain reaction affecting \emph{all} subsequent
transition probabilities and reward functions, so standard dynamic programming approaches no longer apply. Therefore,
we develop an MDP-style policy gradient framework for  nonlinear Markov chains that results in a backward iterative scheme for approximate gradient calculations,
complementing the QPLEX forward iterative scheme for performance evaluation.
It still works with the smaller state space,
and we show in an illustrative example that the backward scheme has
roughly the same computational cost as the forward scheme.
Crucially, no sampling is required.
Thus, our approach sidesteps the high variance inherent in gradient estimators,
in sharp contrast to sampling-based methods for gradient estimation using stochastic simulation.

Even with accurate gradient information using the backward scheme, 
standard first-order methods can struggle.
For instance, with softmax parameterization, standard gradient ascent can
force small step sizes due to high-curvature directions
that yield minimal progress in flatter regions.
So-called \emph{natural gradient ascent} methods aim to overcome this challenge
by exploiting the geometry underlying policy parametrization, but they require inversion of
a large Fisher information matrix.
This paper introduces a natural-gradient-inspired algorithm tailored to  nonlinear MDPs,
which remains computationally feasible by 
utilizing block-diagonal approximations of this Fisher information matrix.
For the special class of state-partitioned tabular policies, this leads to an exponential Q-ascent algorithm similar to familiar reinforcement learning counterparts, 
except that here the Q-functions are not state-action value functions due to nonlinearity.
We show that this algorithm can be viewed as an approximate version of natural gradient ascent when a softmax parameterization is used, and  
we also provide a performance guarantee near pure policies.

For an illustrative single-station dynamic pricing example with non-stationary demand and both waiting and terminal costs, we find policies within 1–2\% of theoretical optimality in seconds on a single CPU.
Standard sampling-based approaches such as Q-learning struggle with this problem due to large effective state spaces, failing to produce acceptable policies within practical time limits.
When there are service-level chance constraints, the optimization landscape is substantially more challenging yet our approach finds a high-quality policy in about a minute. We are unaware of other generic approaches for such decision problems.

\subsection{Literature Review}

\paragraph{Gradient estimation.}
Estimating gradients via stochastic simulation has long been a central challenge in discrete-event system optimization \cite{CassandrasLafortune2008}.
Black-box finite-difference methods are generally applicable but scale poorly with parameter dimensionality, while pathwise methods like Infinitesimal Perturbation Analysis (IPA) offer efficiency but require restrictive structural assumptions to yield valid gradients. Recent work \cite{CheEtAl2024} bridges this gap via differentiable surrogates, yet estimating reliable gradients from complex event dynamics remains computationally intensive.

Policy gradient methods (e.g., REINFORCE, actor–critic) optimize policies directly but face the challenge of high variance estimation.
Because gradients are estimated from sample paths \cite{Williams1992,SuttonEtAl2000}, convergence relies on variance-reduction techniques like baselines and control variates \cite{GreensmithBaxter2004,KondaTsitsiklis2000,Schulman2015TRPO,Schulman2017PPO,HendersonGlynn2002,LamZhou2017}. Despite these advances, the inherent noise in the gradient signal necessitates large sample sizes to achieve precision.

In contrast, our approach operates on a deterministic model of the system's state-distribution dynamics, eliminating Monte Carlo noise altogether. Rather than relying on sampling-based gradient estimation, we compute gradients from the QPLEX surrogate's distributional evolution. This provides a noise-free foundation for optimization, removing the fundamental variance that plagues all sampling-based approaches. This is particularly critical when handling chance constraints involving tail probabilities.

\paragraph{Nonlinear Markov chains and control.}
Nonlinear Markov processes arise naturally when the evolution of a system depends not just on individual states, but on the distribution of states across a population. This concept has deep roots in probability theory, particularly in the study of large interacting particle systems \cite{mckean1966,Sznitman1991,Kolokoltsov2010}. In the continuous-time optimal control setting, this framework has motivated extensive work on mean-field control and McKean-Vlasov dynamics \cite{CarmonaDelarueLachapelle2013}, where the goal is to optimize a large population's collective behavior. These approaches typically lead to a coupled system of PDEs: a Hamilton-Jacobi-Bellman equation for the optimal value function (which depends on the population distribution) and a Fokker-Planck equation for the evolution of that distribution (which depends on the optimal policy). Such coupled systems are generally intractable, both analytically and numerically.

In contrast, we develop a discrete-time computational framework for finite-horizon problems with time-varying transition and reward functions that depend on the state distribution. Our approach avoids differential equations. While related discrete-time formulations have been explored \cite{Baeuerle2023}, the novelty of this paper is a policy gradient framework with an accompanying algorithm.

 \paragraph{Natural gradient policy optimization.}
 Natural gradient methods rescale gradient updates using the Fisher information matrix rather than the objective Hessian, improving optimization efficiency and stability \cite{Amari1998,Kakade2002}. Practical implementations such as trust-region policy optimization (TRPO) \cite{Schulman2015TRPO}, Kronecker-factored curvature (K-FAC) \cite{MartensGrosse2015}, and related block-diagonal approximations make these methods computationally feasible. However, they remain inherently stochastic: both gradients and Fisher information must be estimated from sampled trajectories, introducing variance and computational overhead. Recent work analyzing natural policy gradients in various settings, including queueing control \cite{grosof2024convergence}, establishes convergence guarantees but still relies on Monte Carlo sampling.

Our approach enables noise-free computation of (approximate) gradients and Fisher information by working directly with the system's distributional evolution, allowing natural-gradient updates to be performed deterministically without sampling-based estimators. For computational efficiency, we employ block-diagonal Fisher approximations similar to K-FAC while retaining the variance-free nature of its curvature information.

\subsection{Notation and Terminology}


We often require differentiability of functions defined on convex subsets of Euclidean space. The most delicate case arises when the domain is lower-dimensional, such as the probability simplex or a convex subset thereof.
  We say such a function has the \emph{smooth extension property} if it admits an extension to a neighborhood of the relative interior of its domain, and this extension is continuously differentiable with the gradient extending by continuity to all boundary points in the original domain.
  This property enables the use of standard Euclidean gradients and partial derivatives in ordinary coordinates for all points in the original domain.
 It can typically be readily verified when the function admits an explicit algebraic representation. This ambient approach contrasts with intrinsic manifold methods, which would accommodate weaker differentiability assumptions but require more cumbersome presentation. While the smooth extension is not unique, the orthogonal projection of the resulting ambient gradient onto the tangent space of the affine hull of the domain \emph{is} unique. This ensures that the intrinsic gradient is not affected by the choice of extension.
  The practical benefit of this setup is significant: we work with standard vectors and matrices, and the chain rule applies directly in ordinary coordinates. 

Throughout this paper, we use subscripts and arguments interchangeably,
so we can take gradients with respect to either (if the function is suitably differentiable).
Gradients are column vectors.
Instead of formally defining our notational conventions regarding the use of the gradient symbol $\nabla$, we
use the illustrative example of 
a function $f_\theta(\mu)$ of a pmf $\mu$ on some underlying set and a parameter $\theta$ in some convex set in Euclidean space, where $f$ has the smooth extension property with respect to $\mu$ and $\theta$: 

\begin{enumerate}
  \item If $f$ is real-valued, then $\nabla_\mu f_{\theta}(\mu)$ denotes the gradient of $f$ with respect to $\mu$, holding $\theta$ constant,
    and $\nabla_\theta f_{\theta}(\mu)$ denotes the gradient of $f$ with respect to $\theta$, holding $\mu$ constant. 
We shall often reference the $s$-th component of the gradient $\nabla_\mu f_{\theta}(\mu)$ in ordinary coordinates,
    and we denote this partial derivative by $\frac{\partial f_{\theta}(\mu)}{\partial \mu(s)}$.
    \item If $f$ is vector-valued (e.g., pmf-valued), $\nabla_\mu f_{\theta}(\mu)$ and $\nabla_\theta f_\theta(\mu)$ denote the transposes of the Jacobian of $f_\theta(\mu)$ with respect to $\mu$ and $\theta$, respectively. 
    \item We use the notation $\nabla_{\mu} f_{\theta_0}(\mu_0)$ for the gradient $\nabla_\mu f_\theta(\mu)$ evaluated at $(\mu, \theta) = (\mu_0, \theta_0)$. Similarly, $\nabla_{\theta} f_{\theta_0 }(\mu_0)$ is the gradient $\nabla_\theta f_\theta(\mu)$ evaluated at $(\mu, \theta) = (\mu_0,\theta_0)$.
      Occasionally, for clarity, we use a vertical bar to signify evaluation,
  writing for instance $\nabla_\mu f_\theta(\mu) |_{\mu=\mu_0} $
  to denote the gradient $\nabla_\mu f_\theta(\mu)$ evaluated at $\mu=\mu_0$. 
\end{enumerate}

\subsection{Organization}
Section~\ref{sec:nonlinearMDPs} develops the nonlinear MDP framework and derives a policy gradient theorem, connecting it to the classical MDP case. Section~\ref{sec:statepartitioned} specializes to a tractable policy class, characterizes its local optima, and presents our natural gradient ascent algorithm with convergence guarantees. Section~\ref{sec:dynamicpricingexample} applies the framework to an illustrative dynamic pricing example, constructing the nonlinear transition probabilities (and reward functions) via QPLEX. Section~\ref{sec:numericalillustration} provides numerical experiments on challenging problem instances, demonstrating the practical efficiency of our approach. 
The appendices contain detailed derivations and proofs.

\section{Nonlinear Markov Decision Problems}
\label{sec:nonlinearMDPs}

This section introduces a sequential decision-making framework
built on \emph{nonlinear Markov chains}.
These processes were originally introduced by McKean~\cite{mckean1966}
and play an important role in mean-field stochastic models. 
We use a discrete-time, finite-dimensional, and nonstationary version.
A specification of a nonlinear Markov chain with a finite state space $\SSS$ requires a collection
of conditional pmfs $p^{(t)}_\mu(s'|s)$ parameterized by pmfs $\mu$ on $\SSS$ for $t=0,\ldots,T-1$ for some horizon $T > 0$,
along with an initial distribution $\mu^{(0)}$ on $\SSS$.
The process is then constructed in two steps.
In the first step, a sequence $\{\mu^{(t)}\}_{t=1}^T$ of pmfs on $\SSS$ is recursively defined
via
\begin{equation}
  \label{eq:zetaupdate}
  \mu^{(t+1)}(s' ) = \sum_{s\in\SSS} \mu^{(t)}(s)  \times p^{(t)}_{\mu^{(t)}}(s'|s).
\end{equation}
In the second step, the joint pmf of $(S^{(0)},\ldots, S^{(T)})$ is defined as
\begin{equation*}
  \mu^{(0)}(s^{(0)}) \times \prod_{t=0}^{T-1} p^{(t)}_{\mu^{(t)}}(s^{(t+1)}|s^{(t)}).
\end{equation*}
It can be verified that $\mu^{(t)}$ is the time-$t$ marginal pmf.
These processes derive their name from the observation that the right-hand side of (\ref{eq:zetaupdate}) can be nonlinear in $\mu^{(t)}$.

\subsection{Definition}
A \emph{discrete-time, finite-horizon  nonlinear Markov decision problem} consists of the following model elements: 
\begin{itemize}
\item The number of periods $T$.
\item A fixed underlying finite state space $\SSS = \{ 1, 2, \ldots, |\SSS| \}$.  We let $s, s'$ represent a generic element of $\SSS$.
We write $S^{(t)}$ for the state at time $t$, and we let $s^{(t)}$ represent a generic realization of $S^{(t)}$. 
\item A fixed underlying finite action space $\AAA$. We let $a$ represent a generic element of $\AAA$.
We write $A^{(t)}$ for the action at time $t$ taken upon observing
$S^{(t)}$, and $a^{(t)}$ represent a generic realization of $A^{(t)}$.
\item  \emph{Nonlinear} transition probabilities $p^{(t)}_\mu(s'|s,a)$, $t = 0, 1, \ldots, T-1$, parameterized by a pmf $\mu$ on the state space. 
  In line with nonlinear Markov chain terminology, we call the distribution $p^{(t)}_\mu$ the \emph{kernel} of the environment at time $t$.
We assume that, for each $(s,a,s')$, the restriction of
  $p^{(t)}_\mu(s'|s,a)$ to the set of pmfs $\mu$ on $\SSS$ with $\mu(s)>0$ has the smooth extension property.  
\item Markovian policy functions $\pi^{(t)}_{\theta^{(t)}}(a|s)$ parameterized by $\theta^{(t)}$ from some convex set $\Theta^{(t)}$ in Euclidean space.
  We require that, for each $(s,a)$, $\pi^{(t)}_{\theta^{(t)}}(a|s)$ has the smooth extension property as a function of $\theta^{(t)}$ on $\Theta^{(t)}$.
  
\item  Immediate reward functions $r^{(t)}_\mu(s,a)$, $t = 0, 1, \ldots, T-1$, parameterized by a pmf $\mu$ on the state space, and a 
  terminal reward function $r_{\mu}^{(T)}(s)$.
We assume that restrictions of the $r^{(t)}_\mu(s,a)$ and $r^{(T)}_\mu(s)$ to the set of pmfs $\mu$ on $\SSS$ with $\mu(s)>0$ have the smooth extension property.  

\end{itemize}

The table below classifies immediate reward and transition kernel
  primitives along two dimensions. Our setting is the highlighted in the lower-right cell.
In the standard MDP setting, these primitives are \emph{exogenous} in the sense that their values are known
before any computation is undertaken.  In our setting, these primitives are \emph{endogenous} in the sense that their values only become known after the
state distributions are calculated forward in time.
Our primitives are also nonstationary.

\vspace{2mm}
\begin{center}
\renewcommand{\arraystretch}{1.4}
\begin{tabular}{c|c|c}
 & \textbf{stationary} & \textbf{nonstationary} \\
\hline
\textbf{exogenous}
  & $p(s'|s,a)$, $r(s,a)$
  & $p^{(t)}(s'|s,a)$, $r^{(t)}(s,a)$ \\
\hline
\textbf{endogenous}
  & $p_\mu(s'|s,a)$, $r_\mu(s,a)$
  & \cellcolor{yellow!25} $p^{(t)}_\mu(s'|s)$, $r^{(t)}_\mu(s,a)$ \\
\end{tabular}
\end{center}
\vspace{2mm}

Throughout the remainder of this paper, unless otherwise stated, the symbol $t$ shall represent some non-negative time epoch less than $T$.  

Fix a choice of model elements, an initial $\mu^{(0)}$ on the state space, and a policy parameter vector $\ttheta = (\theta^{(0)}, \ldots, \theta^{(T-1)})$.  
A pmf $q_{\mu^{(0)}, \ttheta}$ of the random vector $(S^{(0)},A^{(0)}, \ldots, S^{(T-1)}, A^{(T-1)}, S^{(T)})$
can be defined in two steps.  In the first step, 
a sequence $\{ \mu_{\ttheta}^{(t)} \}_{t=0}^T$ of pmfs on the state space $\SSS$ is recursively defined, for each $s'\in \SSS$, via
\begin{equation}
   \label{eq:mutdef}
\begin{aligned}
  \mu^{(0)}_{\ttheta}(s')&=\mu^{(0)}(s')\\
  \mu^{(t+1)}_{\ttheta}(s') &= \sum_{s,a} \mu^{(t)}_{\ttheta}(s) \times \pi^{(t)}_{\theta^{(t)}}(a|s) \times p^{(t)}_{\mu^{(t)}_{\ttheta}} (s'| s, a).
\end{aligned}
\end{equation}
We emphasize that $\mu^{(t)}_{\ttheta}$ is constant in $\theta^{(t)},\ldots,\theta^{(T-1)}$,
but for conciseness we do not make this explicit in our notation.
In the second step, the pmf $q_{\mu^{(0)}, \ttheta}$ is defined via 
\begin{equation} \label{qdef0}
  q_{\mu^{(0)}, \ttheta}(s^{(0)}, a^{(0)}, \ldots, a^{(T-1)}, s^{(T)}) = \mu^{(0)}(s^{(0)}) \times \prod_{t=0}^{T-1}
  \left[\pi^{(t)}_{\theta^{(t)}}(a^{(t)}|s^{(t)}) \times p_{\mu^{(t)}_{\ttheta}}^{(t)}(s^{(t+1)}| s^{(t)}, a^{(t)})\right].
\end{equation}
It directly follows from (\ref{qdef0}) that
\begin{equation}
\label{eq:defNLMP}
q_{\mu^{(0)}, \ttheta}(s^{(t+1)}| s^{(0)},a^{(0)},\ldots, s^{(t)}, a^{(t)}) = p^{(t)}_{\mu^{(t)}_{\ttheta}} (s^{(t+1)}| s^{(t)}, a^{(t)}),
\end{equation}
and so the pmf $q_{\mu^{(0)}, \ttheta}$ satisfies
the \emph{nonlinear Markov property}:  the
future state depends only on the current state, the current action, \emph{and the distribution of the current state}, but not on the history of previous states or actions.
Note also that  
\begin{equation}
  \label{eq:piq}
q_{\mu^{(0)}, \ttheta}(a^{(t)}|s^{(0)},a^{(0)}, \ldots, s^{(t)}) = \pi^{(t)}_{\theta^{(t)}}(a^{(t)}|s^{(t)})
\end{equation}
thereby justifying the name of $\pi^{(t)}_{\theta^{(t)}}$.

Given that $q_{\mu^{(0)}, \ttheta}(s^{(0)}) = \mu^{(0)}(s^{(0)})$ and that
\begin{align*}
q_{\mu^{(0)}, \ttheta}(s^{(t+1)}) &= \sum_{s^{(t)}, a^{(t)}} q_{\mu^{(0)}, \ttheta}(s^{(t)}) \times q_{\mu^{(0)}, \ttheta}(a^{(t)} | s^{(t)}) \times q_{\mu^{(0)}, \ttheta}(s^{(t+1)} | s^{(t)}, a^{(t)}) \\
 &= \sum_{s^{(t)}, a^{(t)}} q_{\mu^{(0)}, \ttheta}(s^{(t)}) \times \pi^{(t)}_{\theta^{(t)}}(a^{(t)}|s^{(t)}) \times p^{(t)}_{\mu^{(t)}_{\ttheta}} (s^{(t+1)}| s^{(t)}, a^{(t)}),
\end{align*}
an inductive argument shows that 
\begin{equation}
  \label{eq:muq}
  q_{\mu^{(0)}, \ttheta}(s^{(t)}) =  \mu^{(t)}_{\ttheta}(s^{(t)}),
  \end{equation}
and so $\mu^{(t)}_{\ttheta}$ is the one-dimensional marginal pmf of $S^{(t)}$ under $q_{\mu^{(0)}, \ttheta}$.

For the remainder of this paper, we shall consider $\mu^{(0)}$ fixed and suppress the functional dependence of $\mu^{(0)}$ on $q_{\mu^{(0)}, \ttheta}$.
We denote the expectation operator corresponding to $q_{\ttheta}$ by $\E_{\ttheta}$.

The \emph{nonlinear Markov decision problem} can now be formally stated as:
\begin{equation*} 
\sup_{\ttheta} \E_{\ttheta} \left[ \sum_{t=0}^{T-1} r^{(t)}_{\mu^{(t)}_{\ttheta}}(S^{(t)}, A^{(t)})  + r^{(T)}_{\mu^{(T)}_{\ttheta}}(S^{(T)}) \right].
\end{equation*}
By (\ref{eq:defNLMP}), (\ref{eq:piq}), and (\ref{eq:muq}), the expected immediate reward at time $t < T$ and expected final reward under $q_{\ttheta}$ can be expressed as 
\begin{align}
\E_{\ttheta} \left[r^{(t)}_{\mu^{(t)}_{\ttheta}}(S^{(t)}, A^{(t)})\right] &=  \sum_{s,a} \mu^{(t)}_{\ttheta}(s) \times \pi^{(t)}_{\theta^{(t)}}(a|s) \times  r^{(t)}_{\mu^{(t)}_{\ttheta}} (s,a) \label{expimmrewardtimet} \\
\E_{\ttheta} \left[r^{(T)}_{\mu^{(T)}_{\ttheta}}(S^{(T)})\right] &= \sum_{s} \mu^{(T)}_{\ttheta}(s) \times r^{(T)}_{\mu^{(T)}_{\ttheta}} (s), \nonumber
\end{align}
and so this decision problem can be represented as 
\begin{equation}
  \label{eq:maxJ}
\sup_{\ttheta} J_{\ttheta},
\end{equation}
where
\begin{equation}
  \label{eq:Jtheta}
  J_{\ttheta} = \sum_{t=0}^{T-1} \left( \sum_{s,a} \mu^{(t)}_{\ttheta}(s) \times \pi^{(t)}_{\theta^{(t)}}(a|s) \times  r^{(t)}_{\mu^{(t)}_{\ttheta}} (s,a) \right) + \sum_{s} \mu^{(T)}_{\ttheta}(s) \times r^{(T)}_{\mu^{(T)}_{\ttheta}} (s)
  \end{equation}

Evidently, finite-horizon Markov decision processes are a special case upon letting
the kernels and rewards not depend on $\mu$.

\subsection{The Policy Gradient Theorem}

The algorithms we develop for the decision problem (\ref{eq:maxJ}) require the gradients of $J_{\ttheta}$ with respect to the $\theta^{(t)}$
for all $t < T$.  A change in $\theta^{(t)}$ directly affects the expected immediate reward at time $t$ in (\ref{expimmrewardtimet}), 
since the probabilities of realizing each action given a state at time $t$ will be changed.  
In sharp contrast to the Markov setting, in this nonlinear setting a change in $\theta^{(t)}$ also indirectly affects \emph{all} subsequent rewards
because it directly affects the pmf $\mu^{(t+1)}_{\ttheta}$, see (\ref{eq:mutdef}), and thus \emph{all} subsequent pmfs $\mu^{(t+2)}_{\ttheta},\ldots,\mu^{(T)}_{\ttheta}$, too.

\begin{theorem}[Policy Gradient Theorem]
  \label{thm:pgt}
Suppose a pmf $\mu^{(0)}$ on initial states and
a vector of policy parameters $\ttheta$ are given.
The gradient $\nabla_{\theta^{(t)}} J_{\ttheta}$ of the expected total reward with respect to $\theta^{(t)}$ satisfies
\begin{equation}
  \label{eq:pgt}
  \nabla_{\theta^{(t)}} J_{\ttheta} = \sum_{s, a} \mu^{(t)}_{\ttheta}(s) \times \left(\nabla_{\theta^{(t)}} \pi^{(t)}_{\theta^{(t)}} (a | s) \right)\times Q_{\mu^{(t)}_{\ttheta},\sigma^{(t+1)}_{\ttheta}}^{(t)}(s, a),
  \end{equation}
  where $Q^{(t)}_{\mu,\sigma'}$ is defined as 
 \begin{equation*}
Q^{(t)}_{\mu, \sigma'}(s, a) =  r^{(t)}_{\mu}(s, a) + \sum_{s'}  p^{(t)}_{\mu}(s' | s, a) \times \sigma'(s'),
\end{equation*}
the $\mu^{(t)}_{\ttheta}$ can be calculated with (\ref{eq:mutdef}),
and the $\sigma^{(t)}_{\ttheta}$ can be calculated via 
\begin{equation}
 \label{eq:sigmatrec}
\begin{aligned}
\sigma^{(T)}_{\ttheta} (s) &= r^{(T)}_{\mu^{(T)}_{\ttheta}}(s)  + \sum_{\tilde s} \mu^{(T)}_{\ttheta}(\tilde s)\times \frac{\partial}{\partial \mu(s)} r^{(T)}_{\mu^{(T)}_{\ttheta}}(\tilde s) \\
\sigma^{(t)}_{\ttheta}(s) &= \sum_a \pi^{(t)}_{\theta^{(t)}} (a | s) \times Q^{(t)}_{\mu^{(t)}_{\ttheta}, \sigma^{(t+1)}_{\ttheta}}(s, a) + 
\sum_{\tilde s, a} \mu^{(t)}_{\ttheta} (\tilde s) \times \pi^{(t)}_{\theta^{(t)}} (a | \tilde s) \times \frac{\partial}{\partial \mu(s)} Q^{(t)}_{\mu^{(t)}_{\ttheta}, \sigma^{(t+1)}_{\ttheta}} (\tilde s, a).
\end{aligned}
\end{equation}
\end{theorem}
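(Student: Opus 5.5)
I will prove this by a backward-induction/adjoint argument on the cost-to-go viewed as a function of the state distribution. For each $k \le T$ define the tail functional
\[
V^{(k)}(\mu) = \sum_{u=k}^{T-1} \sum_{s,a} \mu^{(u)}(s)\,\pi^{(u)}_{\theta^{(u)}}(a|s)\, r^{(u)}_{\mu^{(u)}}(s,a) + \sum_s \mu^{(T)}(s)\, r^{(T)}_{\mu^{(T)}}(s),
\]
where $\mu^{(k)}=\mu$ and $\mu^{(u+1)}$ is generated from $\mu^{(u)}$ by the recursion (\ref{eq:mutdef}) with the fixed parameters $\theta^{(k)},\ldots,\theta^{(T-1)}$. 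Thus $V^{(k)}$ depends on $\mu$ and on $\theta^{(k)},\ldots,\theta^{(T-1)}$. Two identities follow directly from the definitions:
\[
V^{(T)}(\mu)=\sum_s \mu(s) r^{(T)}_\mu(s), \qquad V^{(k)}(\mu)=\sum_{s,a}\mu(s)\pi^{(k)}(a|s) r^{(k)}_\mu(s,a) + V^{(k+1)}\bigl(\Phi^{(k)}(\mu)\bigr),
\]
where $\Phi^{(k)}(\mu)(s') = \sum_{s,a}\mu(s)\pi^{(k)}(a|s)p^{(k)}_\mu(s'|s,a)$. Moreover, $J_{\ttheta}=V^{(0)}(\mu^{(0)}) = \sum_{u<t}(\text{terms independent of }\theta^{(t)}) + V^{(t)}(\mu^{(t)}_{\ttheta})$. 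The equality holds because $\mu^{(t)}_{\ttheta}$ does not depend on $\theta^{(t)},\ldots,\theta^{(T-1)}$, and neither do the earlier rewards.

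The key claim, proved by backward induction on $k$, is that $\nabla_\mu V^{(k)}(\mu^{(k)}_{\ttheta}) = \sigma^{(k)}_{\ttheta}$ as ambient gradients in ordinary coordinates. We use the smooth extension property so that the chain rule applies, and the claim holds up to the choice of extension, which affects nothing along the tangent directions that matter. For $k=T$ the claim is the product rule applied to $\sum_s\mu(s)r^{(T)}_\mu(s)$, which yields exactly the first line of (\ref{eq:sigmatrec}).

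For the inductive step, differentiate the second identity at $\mu=\mu^{(k)}_{\ttheta}$ and use $\Phi^{(k)}(\mu^{(k)}_{\ttheta})=\mu^{(k+1)}_{\ttheta}$ together with the inductive hypothesis. The chain rule gives
\[
\frac{\partial V^{(k)}}{\partial\mu(s)} = \sum_a \pi^{(k)}(a|s) r^{(k)}(s,a) + \sum_{\tilde s,a}\mu(\tilde s)\pi^{(k)}(a|\tilde s)\frac{\partial r^{(k)}_\mu(\tilde s,a)}{\partial\mu(s)} + \sum_{s'}\sigma^{(k+1)}(s')\frac{\partial \Phi^{(k)}(\mu)(s')}{\partial\mu(s)}.
\]
Expand $\partial\Phi^{(k)}/\partial\mu(s)$ by the product rule into a term $\sum_a\pi^{(k)}(a|s)p^{(k)}_\mu(s'|s,a)$ and a term $\sum_{\tilde s,a}\mu(\tilde s)\pi^{(k)}(a|\tilde s)\partial p^{(k)}_\mu(s'|\tilde s,a)/\partial\mu(s)$. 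Regrouping, and noting that $\sigma^{(k+1)}$ is held fixed when differentiating $Q^{(k)}_{\mu,\sigma^{(k+1)}}$ in $\mu$, reproduces the second line of (\ref{eq:sigmatrec}).

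Finally, differentiate $V^{(t)}(\mu^{(t)}_{\ttheta})$ in $\theta^{(t)}$ with $\mu$ held fixed. The parameter $\theta^{(t)}$ enters only through $\pi^{(t)}$: in the immediate reward term directly, and in $V^{(t+1)}$ through $\Phi^{(t)}$. By the chain rule and $\nabla_\mu V^{(t+1)}(\mu^{(t+1)}_{\ttheta})=\sigma^{(t+1)}_{\ttheta}$, we get
\[
\sum_{s,a}\mu^{(t)}_{\ttheta}(s)\,\nabla_{\theta^{(t)}}\pi^{(t)}(a|s)\Bigl[r^{(t)}(s,a)+\sum_{s'}p^{(t)}(s'|s,a)\sigma^{(t+1)}_{\ttheta}(s')\Bigr],
\]
which is (\ref{eq:pgt}).

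The main obstacle is the technical justification of the chain rule on the simplex. The kernels and rewards are only assumed smooth on $\{\mu:\mu(s)>0\}$, and ambient gradients depend on the chosen extension. I would handle this by noting that all derivatives are taken along curves $\theta\mapsto\mu^{(u)}_{\ttheta}$ that stay in the simplex. Along such curves only tangential components matter, and these are extension-independent as noted in the Notation subsection. A separate case is states with $\mu^{(u)}(s)=0$: there $\mu$-derivatives of the $s$-indexed quantities are multiplied by $\mu(s)=0$, so those terms vanish.
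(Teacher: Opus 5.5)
Your proposal is correct and follows essentially the same route as the paper: your tail functional $V^{(k)}(\mu)$ is the paper's $J^{(k)}_{\mu,\ttheta}$, with the same one-step recursion through $\Phi^{(k)}=\PPP^{(k)}_{\theta^{(k)}}$, and your chain-rule computations giving (\ref{eq:sigmatrec}) and (\ref{eq:pgt}) match the paper's. The only difference is cosmetic. The paper defines $\sigma^{(t)}_{\ttheta}$ as $\nabla_\mu J^{(t)}_{\mu,\ttheta}$ evaluated at $\mu^{(t)}_{\ttheta}$ and then derives the recursion, whereas you define $\sigma$ by the recursion and verify the gradient identity by backward induction.
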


The figure below depicts the iterative schemes involved in the gradient calculation.
  Arrows indicate the computational dependencies.
  The top row shows the forward scheme defined in (\ref{eq:mutdef}),
  the bottom row shows the backward scheme in (\ref{eq:sigmatrec}), and
  the middle row illustrates how the gradient components are computed from (\ref{eq:pgt}).

\begin{center}
\begin{tikzpicture}[every node/.style={rectangle,draw}, box/.style={minimum width=1.8cm,minimum height=.95cm,rounded corners=1mm},thick,xscale=1.5,yscale=1.2]%
  \node (mu) at (0,2) [box] {\strut$\mu^{(0)}_{\ttheta}$};
  \node (mup) at (2,2) [box] {\strut$ \mu^{(1)}_{\ttheta}$};
  \node (mupp) at (4,2) [box] {\strut$ \mu^{(2)}_{\ttheta}$};
  \node[draw=none] (tdots) at (5.5,2) {$\cdots$};
  \node (muppp) at (7,2) [box] {\strut$ \mu^{(T-1)}_{\ttheta}$};
  \node (mupppp) at (9,2) [box] {\strut$ \mu^{(T)}_{\ttheta}$};
    \node (sigma) at (0,-2) [box] {\strut$\sigma^{(0)}_{\ttheta}$};
  \node (sigmap) at (2,-2) [box] {\strut$ \sigma^{(1)}_{\ttheta}$};
  \node (sigmapp) at (4,-2) [box] {\strut$ \sigma^{(2)}_{\ttheta}$};
  \node[draw=none] (sigmapppghost) at (6,-2) {};
  \node[draw=none] (bdots) at (5.5,-2) {$\cdots$};
  \node (sigmappp) at (7,-2) [box] {\strut$ \sigma^{(T-1)}_{\ttheta}$};
  \node (sigmapppp) at (9,-2) [box] {\strut$ \sigma^{(T)}_{\ttheta}$};
  \node (omega) at (1,0) [box] {\strut$\nabla_{\theta^{(0)}} J_{\ttheta}$};
  \node (omegap) at (3,0) [box] {\strut$ \nabla_{\theta^{(1)}} J_{\ttheta}$};
  \node (omegapp) at (5,0) [box] {\strut$ \nabla_{\theta^{(2)}} J_{\ttheta}$};
  \node[draw=none] (omegappghost) at (6, 0) {};
  \node[draw=none] (cdots) at (6.5,0) {$\cdots$};
  \node (omegappp) at (8,0) [box] {\strut$ \nabla_{\theta^{(T-1)}} J_{\ttheta}$};
  \path[draw,ultra thick,->] (mu) to (mup);
  \path[draw,ultra thick,->] (mup) to (mupp);
  \path[draw,ultra thick,-] (mupp) to (tdots);
  \path[draw,ultra thick,->] (tdots) to (muppp);
  \path[draw,ultra thick,->] (muppp) to (mupppp);
  \path[draw,ultra thick,->] (sigmap) to (sigma);
  \path[draw,ultra thick,->] (sigmapp) to (sigmap);
  \path[draw,ultra thick,->] (bdots) to (sigmapp);
  \path[draw,ultra thick,-] (sigmappp) to (bdots);
  \path[draw,ultra thick,->] (sigmapppp) to (sigmappp);
  \path[draw,ultra thick,->] (sigmap) to ($(omega.south)+(.22,0)$);
  \path[draw,ultra thick,->] (sigmapp) to ($(omegap.south)+(.22,0)$);
  \path[draw,ultra thick,-] (sigmappp) to ($(sigmappp)!0.4!(omegappghost.south east)$);
  \node[draw=none] (cbdots) at (6,-1.2) {$\ddots$};
  \path[draw,ultra thick,->] ($(sigmapppghost)!0.75!(omegapp.south east)-(.25,0)$) to ($(omegapp.south)+(.22,0)$);
  \path[draw,ultra thick,->] (sigmapppp) to ($(omegappp.south)+(.22,0)$);;
  \path[draw,ultra thick,->] (mu) to ($(omega.north)-(.22,0)$);
  \path[draw,ultra thick,->] (mup) to ($(omegap.north)-(.22,0)$);
  \path[draw,ultra thick,->] (mupp) to ($(omegapp.north)-(.22,0)$);
  \path[draw,ultra thick,->] (muppp) to ($(omegappp.north)-(.22,0)$);
  \path[draw,ultra thick,->] (mu) to (sigma);
  \path[draw,ultra thick,->] (mup) to (sigmap);
  \path[draw,ultra thick,->] (mupp) to (sigmapp);
  \path[draw,ultra thick,->] (muppp) to (sigmappp);
  \path[draw,ultra thick,->] (mupppp) to (sigmapppp);
\end{tikzpicture}
\end{center}

\subsection{Connection to the Markov Chain Setting}
The function $Q^{(t)}_{\mu^{(t)}_{\ttheta}, \sigma^{(t+1)}_{\ttheta}}$
appearing in the policy gradient theorem
is \emph{not} the state-action value function. 
In this subsection, we explain why and
also relate our theorem to the standard textbook form
of the policy gradient theorem. 

In the (classical) Markov chain case,
the familiar backward recursion for the state value function $V^{(t)}_{\ttheta,\Markov}$ for Markov decision problems is 
\begin{equation}
  \label{eq:recMarkovV}
  \begin{aligned}
    V^{(T)}_{\ttheta,\Markov}(s) &= r^{(T)}_{\Markov}(s) \\
    V^{(t)}_{\ttheta,\Markov}(s) &= \sum_a \pi^{(t)}_{\theta^{(t)}}(a|s)\times \left[r^{(t)}_{\Markov} (s,a) + \sum_{s'} p^{(t)}_{\Markov} (s'|s,a)\times V^{(t+1)}_{\ttheta,\Markov}(s') \right],
    \end{aligned}
\end{equation}
which can be expressed using state-action value functions $Q_{\ttheta,\Markov}^{(t)}$ as
\begin{equation}
  \label{eq:recMarkovVQ}
  \begin{aligned}
    V^{(T)}_{\ttheta,\Markov}(s) &= r^{(T)}_{\Markov} (s) \\
    Q_{\ttheta,\Markov}^{(t)}(s, a) &= r^{(t)}_{\Markov} (s,a) + \sum_{s'} p^{(t)}_{\Markov} (s'|s,a)\times V^{(t+1)}_{\ttheta,\Markov}(s')  \\
    V^{(t)}_{\ttheta,\Markov}(s) &= \sum_a \pi^{(t)}_{\theta^{(t)}}(a|s)\times Q_{\ttheta,\Markov}^{(t)}(s, a).
    \end{aligned}
  \end{equation}
Thus, in that case, the policy gradient theorem
can be obtained immediately upon noting that the expected total reward 
\begin{equation*}
J_{\ttheta, \Markov} = \sum_{t=0}^{T-1} \sum_{s,a} \mu^{(t)}_{\ttheta}(s) \times \pi^{(t)}_{\theta^{(t)}}(a|s) \times r^{(t)}_{\Markov} (s,a)  + \sum_s \mu^{(T)}_{\ttheta}(s)\times r^{(T)}_{\Markov}(s)
\end{equation*}
can be written as a sum over the first $t$ expected immediate rewards plus
$\sum_{s,a} \mu^{(t)}_{\ttheta}(s) \times \pi^{(t)}_{\theta^{(t)}} (a|s)\times Q^{(t)}_{\ttheta,\Markov}(s,a)$. 
Since $Q^{(t)}_{\ttheta, \Markov}$ and $\mu^{(\tau)}_{\ttheta}$ for $\tau\le t$ are constant in $\theta^{(t)}$, the policy gradient theorem follows:
\begin{equation} \label{eq:JTMarkovgradexpression}
\nabla_{\theta^{(t)}}J_{\ttheta,\Markov}=
\sum_{s,a} \mu^{(t)}_{\ttheta}(s)\times \nabla_{\theta^{(t)}} \pi^{(t)}_{\theta^{(t)}}(a|s) 
\times Q^{(t)}_{\ttheta,\Markov}(s,a).
\end{equation}

Now consider the nonlinear Markov chain case.
Fix $t$ and policy parameter $\theta^{(t)}\in\Theta^{(t)}$.  For a pmf $\mu$ on the state space $\SSS$, define the pmf $\PPP^{(t)}_{\theta^{(t)}}(\mu)$ on $\SSS$ via
\begin{equation} \label{eq:Ptmuthetadef}
[\PPP^{(t)}_{\theta^{(t)}}(\mu)](s') = \sum_{s,a}\mu(s)\times \pi^{(t)}_{\theta^{(t)}}(a|s) \times p^{(t)}_\mu(s'|s,a).
\end{equation}
Note that $\PPP^{(t)}_{\theta^{(t)}}(\mu^{(t)}_{\ttheta}) = \mu^{(t+1)}_{\ttheta}$ for a fixed policy vector $\ttheta$.  
Recursively define the real-valued functions $V^{(t)}_{\mu, \ttheta}$ by setting, for each $s\in \SSS$,
\begin{equation}
\label{eq:nlmcVrec}
\begin{aligned}  
V^{(T)}_{\mu, \ttheta}(s) &= r^{(T)}_{\mu} (s) \\
 V^{(t)}_{\mu, \ttheta}(s) &= \sum_a \pi^{(t)}_{\theta^{(t)}}(a | s) \times r^{(t)}_{\mu}(s, a)  + \sum_{a, s'}  \pi^{(t)}_{\theta^{(t)}}(a | s) \times p_{\mu}^{(t)}(s' | s, a) \times V_{\PPP^{(t)}_{\theta^{(t)}}(\mu), \ttheta}^{(t+1)}(s'),
\end{aligned}
\end{equation}
and define the state value function $\check V^{(t)}_{\ttheta}$ via $\check V^{(t)}_{\ttheta}(s) = V^{(t)}_{\mu^{(t)}_{\ttheta},\ttheta}(s)$.
Expanding (\ref{eq:nlmcVrec})  
in the same manner as we did to derive (\ref{eq:recMarkovVQ}) from (\ref{eq:recMarkovV}) and substituting the appropriate $\mu^{(t)}_{\ttheta}$ for $\mu$
results in the recursion
  \begin{equation*}
\begin{aligned}  
  \check V^{(T)}_{\ttheta}(s) &= r^{(T)}_{\mu^{(T)}_{\ttheta}}(s) \\
  \check Q^{(t)}_{\ttheta}(s,a) &=r^{(t)}_{\mu^{(t)}_{\ttheta}}(s, a)  + \sum_{ s'} p_{\mu^{(t)}_{\ttheta}}^{(t)}(s' | s, a) \times \check V_{\ttheta}^{(t+1)}(s')\\
 \check V^{(t)}_{\ttheta}(s) &= \sum_a \pi^{(t)}_{\theta^{(t)}}(a | s) \times  \check Q^{(t)}_{\ttheta}(s,a),
\end{aligned}
\end{equation*}
where $\check Q^{(t)}_{\ttheta}$ is the state-action value function.
In contrast to this recursion for the state value function,
the recursion for the $\sigma^{(t)}_{\ttheta}$ given by 
\begin{equation*}
\begin{aligned}  
  \sigma^{(T)}_{\ttheta}(s) &= r^{(T)}_{\mu^{(T)}_{\ttheta}} (s) \\
  Q^{(t)}_{\mu^{(t)}_{\ttheta}, \sigma^{(t+1)}_{\ttheta}}(s, a) &= r^{(t)}_{\mu^{(t)}_{\ttheta}}(s, a) + \sum_{s'}  p^{(t)}_{\mu^{(t)}_{\ttheta}}(s' | s, a) \times \sigma^{(t+1)}_{\ttheta}(s')\\
\sigma^{(t)}_{\ttheta}(s) &= \sum_a \pi^{(t)}_{\theta^{(t)}} (a | s) \times Q^{(t)}_{\mu^{(t)}_{\ttheta}, \sigma^{(t+1)}_{\ttheta}}(s, a) + 
\sum_{\tilde s, a} \mu^{(t)}_{\ttheta} (\tilde s) \times \pi^{(t)}_{\theta^{(t)}} (a | \tilde s) \times \frac{\partial}{\partial \mu(s)} Q^{(t)}_{\mu^{(t)}_{\ttheta}, \sigma^{(t+1)}_{\ttheta}} (\tilde s, a)\\
\end{aligned}
\end{equation*}
is fundamentally \emph{different} due to the presence of the extra partial derivative term in the last equation. This is a
consequence of the fact that changing $\mu^{(t)}_{\ttheta}$ also changes
the transition probabilities $p_{\mu^{(t)}_{\ttheta}}^{(t)}(s'|s,a)$ in the nonlinear setting.

The policy gradient theorem in the nonlinear Markov chain case
uses $Q^{(t)}_{\mu^{(t)}_{\ttheta}, \sigma^{(t+1)}_{\ttheta}}$, which \emph{includes} this
extra term, instead of the state-action value function $\check Q^{(t)}_{\ttheta}$.
In the Markov chain case, where $r^{(t)}_\mu$ and $p^{(t)}_\mu(s'|s,a)$ do not depend on $\mu$, the two functions $Q^{(t)}_{\mu^{(t)}_{\ttheta}, \sigma^{(t+1)}_{\ttheta}}$ and $\check Q^{(t)}_{\ttheta}$ are identical.
We see that $Q^{(t)}_{\mu^{(t)}_{\ttheta}, \sigma^{(t+1)}_{\ttheta}}$
is \emph{not} the state-action value function, even though we use the symbol $Q$.

We next relate our policy gradient theorem to
the policy gradient theorem found in textbooks such as \cite{SuttonBarto2018}.
We stay in the Markov setting.
After augmenting the state with time to incorporate nonstationarity,
the collection of value functions $\{V^{(t)}_{\ttheta,\Markov}: 0\le t\le T\}$ can be viewed as a \emph{single} value function on the set of augmented states,
where the value at $(s,t)$ is given by $V^{(t)}_{\ttheta,\Markov}(s)$.
Similarly, the value at an augmented state-action pair $(s,t,a)$ is 
$Q^{(t)}_{\ttheta,\Markov}(s,a)$.
The agent's policy is modeled via $\tilde \pi_\zeta(a|s,t)$
using a single---but typically high-dimensional---policy parameter $\zeta$.
The probability $\tilde \mu_\zeta(s,t)$ of ever reaching the augmented state $(s,t)$
can be calculated recursively via  
\begin{equation}
  \label{eq:forwardextendedstate}
  \begin{aligned}
\tilde \mu_\zeta(s,0)&=\mu^{(0)}(s)\\
\tilde \mu_\zeta(s',t+1) &= \sum_{s,a} \tilde \mu_\zeta(s,t) \times \tilde \pi_\zeta(a|s,t) \times p^{(t)}_{\Markov} (s'|s,a).
\end{aligned}
\end{equation}
Writing $\tilde J_{\zeta,\Markov}$ for the expected total reward,
the policy gradient theorem in \cite[Section~13.2]{SuttonBarto2018} takes the form
\begin{equation*}
  \nabla_{\zeta}\tilde J_{\zeta,\Markov}=\sum_{s,t} \tilde \mu_\zeta(s,t)\times \sum_{a}
  \nabla_{\zeta} \tilde \pi_{\zeta}(a|s,t) 
\times Q^{(t)}_{\ttheta,\Markov}(s,a).
\end{equation*}
Suppose that $\zeta$ is taken as the vector $\ttheta=(\theta^{(0)},\ldots,\theta^{(T-1)})$
and that $\tilde \pi_\zeta(a|s,t)$ takes the form $\pi^{(t)}_{\theta^{(t)}}(a|s)$,
so we have that $\tilde \mu_\zeta(s,t) = \tilde \mu^{(t)}_{\ttheta}(s)$ and $\tilde J_{\zeta,\Markov}=J_{\ttheta,\Markov}$.
Since $\nabla_{\zeta} \tilde J_{\zeta,\Markov}$ can then be written as $(\nabla_{\theta^{(0)}} J_{\ttheta,\Markov},\ldots,\nabla_{\theta^{(T-1)}}J_{\ttheta,\Markov})^{\top}$, 
the standard policy gradient for $t$ takes the form in (\ref{eq:JTMarkovgradexpression}).

\subsection{Natural Gradient}
While the standard gradient $\nabla_{\ttheta} J_{\ttheta}$ points in the direction of steepest ascent in Euclidean parameter space, the \emph{natural gradient} 
$\widehat \nabla_{\ttheta} J_{\ttheta}$ identifies the steepest direction according to the intrinsic geometry of the parameter space, as measured by the Fisher information metric \cite{Amari1998,Kakade2002}. 
Specifically, the natural gradient is defined via
\begin{equation*}
\widehat \nabla_{\ttheta} J_{\ttheta}= F(\ttheta)^\dagger \times \nabla_{\ttheta} J_{\ttheta},
\end{equation*}
where $F(\ttheta)^{\dagger}$ is the Moore-Penrose inverse of the \emph{Fisher information matrix} 
\begin{equation*}
F(\ttheta) =  \E_{\ttheta} \Big( \nabla_{\ttheta} \log q_{\ttheta}(S^{(0)},A^{(0)},\ldots, S^{(T)})  \times \left(\nabla_{\ttheta} \log q_{\ttheta}(S^{(0)},A^{(0)},\ldots, S^{(T)}) \right)^\top \Big). 
\end{equation*}
The square matrix $F(\ttheta)$
can be represented using the following block structure:  
\begin{equation*}
F(\ttheta) = \left[
\begin{array}{cccc}
F^{(0, 0)}(\ttheta) & F^{(0, 1)}(\ttheta) & \cdots & F^{(0,T-1)}(\ttheta) \\
F^{(1, 0)}(\ttheta) & F^{(1, 1)}(\ttheta) & \cdots & F^{(1,T-1)}(\ttheta) \\
\vdots     &  \vdots    & \ddots & \vdots \\
F^{(T-1, 0)}(\ttheta) & F^{(T-1, 1)}(\ttheta) & \cdots & F^{(T-1,T-1)}(\ttheta) \\
\end{array}
\right],
\end{equation*}
where the matrices $F^{(t_1,t_2)}(\ttheta)$ are given by
\begin{equation} \label{Ft1t2def}
F^{(t_1, t_2)}(\ttheta) = \E_{\ttheta} \left( \nabla_{\theta^{(t_1)}} \log q_{\ttheta}(S^{(0)},A^{(0)},\ldots, S^{(T)})  \times \left(\nabla_{\theta^{(t_2)}} \log q_{\ttheta}(S^{(0)},A^{(0)},\ldots, S^{(T)}) \right)^\top \right).
\end{equation}
Of course, $F^{(t_2, t_1)}(\ttheta)$ is the transpose of $F^{(t_1, t_2)}(\ttheta)$.  
Here the expectation of a matrix should be interpreted elementwise.

Proposition \ref{prop:fisherrecursion} below shows how to calculate each block $F^{(t_1, t_2)}(\ttheta)$ of $F(\ttheta)$
in terms of the `policy function' Fisher information matrices $K^{(t)}(\ttheta)$ defined by
\begin{equation*}
  K^{(t)}(\ttheta) = \E_{\ttheta} \left(\nabla_{\theta^{(t)}} \log \pi^{(t)}_{\theta^{(t)}}(A^{(t)} | S^{(t)}) \times (\nabla_{\theta^{(t)}} \log \pi^{(t)}_{\theta^{(t)}}(A^{(t)} | S^{(t)}))^\top \right)
\end{equation*}
and the transition kernel Fisher information matrices $M^{(t)}(\ttheta)$ defined by
\begin{equation*}
 M^{(t)}(\ttheta) =  \E_{\ttheta} \left(\nabla_\mu \log p^{(t)}_{\mu^{(t)}_{\ttheta}}(S^{(t+1)}|S^{(t)}, A^{(t)})\times\left(\nabla_\mu \log p^{(t)}_{\mu^{(t)}_{\ttheta}}(S^{(t+1)}|S^{(t)},A^{(t)})\right)^\top\right).
  \end{equation*}
It is sufficient to focus on $0 \le t_1 \le t_2 \le T-1$ due to symmetry of $F(\ttheta)$.

In the statement of the proposition below (and for the remainder of this paper) we use the shorthand notation $\nabla_{\mu^{(t)}_{\ttheta}} \mu^{(t+1)}_{\ttheta} =
  \nabla_{\mu}  \PPP^{(t)}_{\theta^{(t)}}(\mu)\big|_{\mu=\mu^{(t)}_{\ttheta}}$ and 
  $\nabla_{\theta^{(t)}} \mu^{(t+1)}_{\ttheta} =  \nabla_{\theta^{(t)}}  \PPP^{(t)}_{\theta^{(t)}}(\mu) \big|_{\mu=\mu^{(t)}_{\ttheta}}$.
  Using this notation, for $t_1<t_2$, we furthermore write
\begin{equation*}
  \nabla_{\theta^{(t_1)}} \mu^{(t_2+1)}_{\ttheta} = (\nabla_{\theta^{(t_1)}} \mu^{(t_1+1)}_{\ttheta}) \times (\nabla_{\mu^{(t_1+1)}_{\ttheta}} \mu^{(t_1+2)}_{\ttheta}) \times \cdots\times
 (\nabla_{\mu^{(t_2)}_{\ttheta}} \mu^{(t_2+1)}_{\ttheta}),
\end{equation*}
which is the chain rule when 
$\mu^{(t_2+1)}_{\ttheta}$ is viewed as the composition of $\PPP^{(t_2)}_{\theta^{(t_2)}},\ldots, \PPP^{(t_1)}_{\theta^{(t_1)}}$ evaluated at $\mu^{(t_1)}_{\ttheta}$.

\begin{proposition}
  \label{prop:fisherrecursion}
For the diagonal blocks of $F(\ttheta)$, we have 
\begin{equation*}
  F^{(t, t)}(\ttheta) =
    K^{(t)}(\ttheta)   + (\nabla_{\theta^{(t)}} \mu^{(t+1)}_{\ttheta}) \times G^{(t+1)}(\ttheta) \times (\nabla_{\theta^{(t)}} \mu^{(t+1)}_{\ttheta})^\top.
\end{equation*}
For the off-diagonal blocks, with $0 \le t_1 < t_2 \le T-1$, we have
\begin{equation*}
  F^{(t_1, t_2)}(\ttheta) =
  (\nabla_{\theta^{(t_1)}} \mu^{(t_2+1)}_{\ttheta}) \times G^{(t_2+1)}(\ttheta) \times (\nabla_{\theta^{(t_2)}} \mu^{(t_2+1)}_{\ttheta})^\top,
\end{equation*}
where the $G^{(t)}(\ttheta)$ can be recursively calculated using
$G^{(T)}(\ttheta)=0$ and, for $t=1,\ldots,T-1$,
\begin{equation*}
G^{(t)}(\ttheta)  = M^{(t)}(\ttheta)+(\nabla_{\mu^{(t)}_{\ttheta}} \mu^{(t+1)}_{\ttheta}) \times G^{(t+1)} (\ttheta)\times (\nabla_{\mu^{(t)}_{\ttheta}} \mu^{(t+1)}_{\ttheta})^\top.
\end{equation*}
\end{proposition}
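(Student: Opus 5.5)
The plan is to write out the score function of the trajectory pmf explicitly, split it into per-period "martingale-difference" pieces, and show that all cross-period products have zero expectation. From (\ref{qdef0}),
\begin{equation*}
\log q_{\ttheta}(S^{(0)},\ldots,S^{(T)}) = \log\mu^{(0)}(S^{(0)}) + \sum_{\tau=0}^{T-1}\Big[\log \pi^{(\tau)}_{\theta^{(\tau)}}(A^{(\tau)}|S^{(\tau)}) + \log p^{(\tau)}_{\mu^{(\tau)}_{\ttheta}}(S^{(\tau+1)}|S^{(\tau)},A^{(\tau)})\Big].
\end{equation*}
The parameter $\theta^{(t)}$ enters only through $\pi^{(t)}$ and through $\mu^{(\tau)}_{\ttheta}$ for $\tau\ge t+1$, since $\mu^{(\tau)}_{\ttheta}$ is constant in $\theta^{(\tau)},\ldots,\theta^{(T-1)}$. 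Set $u_t=\nabla_{\theta^{(t)}}\log\pi^{(t)}_{\theta^{(t)}}(A^{(t)}|S^{(t)})$ and $v_\tau=\nabla_\mu\log p^{(\tau)}_{\mu^{(\tau)}_{\ttheta}}(S^{(\tau+1)}|S^{(\tau)},A^{(\tau)})$. By the chain rule, with $\nabla_{\theta^{(t)}}\mu^{(\tau)}_{\ttheta}$ the product defined before the proposition,
\begin{equation*}
\nabla_{\theta^{(t)}}\log q_{\ttheta} = u_t + \sum_{\tau=t+1}^{T-1}(\nabla_{\theta^{(t)}}\mu^{(\tau)}_{\ttheta})\, v_\tau .
\end{equation*}

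Next I would establish the zero-mean properties. Conditionally on the history up to $S^{(t)}$, $u_t$ has mean $\sum_a\nabla_{\theta^{(t)}}\pi^{(t)}_{\theta^{(t)}}(a|S^{(t)})=\nabla_{\theta^{(t)}}1=0$; this uses (\ref{eq:piq}). Conditionally on the history up to $(S^{(\tau)},A^{(\tau)})$, $v_\tau$ has mean $c_\tau=\sum_{s'}\nabla_\mu p^{(\tau)}_{\mu}(s'|S^{(\tau)},A^{(\tau)})$, by (\ref{eq:defNLMP}).

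\textbf{This is the step I expect to be the delicate one.} Because of the smooth extension convention, $c_\tau$ need not vanish as an ambient vector. It is only orthogonal to the tangent space $\{d:\sum_s d(s)=0\}$, since $\sum_{s'}p_\mu=1$ on the simplex. However, $v_\tau$ only ever appears premultiplied by $\nabla_{\theta^{(t)}}\mu^{(\tau)}_{\ttheta}$. The columns of that matrix lie in the tangent space, because $\mu^{(\tau)}_{\ttheta}$ is a pmf for every $\ttheta$; equivalently, each factor $\nabla_{\mu}\PPP$ maps tangent vectors to tangent vectors, and $\nabla_{\theta}\PPP$ has tangent columns. Hence $(\nabla_{\theta^{(t)}}\mu^{(\tau)}_{\ttheta})\,c_\tau=0$, and the weighted terms $(\nabla_{\theta^{(t)}}\mu^{(\tau)}_{\ttheta})v_\tau$ are genuine martingale differences. (Alternatively, one may assume an extension with $\sum_{s'}p_\mu=1$ in a neighborhood; by the uniqueness remark in the notation section this does not change the result.)

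With this in hand, a tower-property argument shows that the expectation of the product of any two of these pieces attached to different time slots vanishes. The slots are ordered $u_0,v_0,u_1,v_1,\ldots$, and the later piece is conditionally centered given everything the earlier piece depends on. For $t_1\le t_2$, only the matching pairs survive in $F^{(t_1,t_2)}(\ttheta)=\E_{\ttheta}[(\nabla_{\theta^{(t_1)}}\log q_{\ttheta})(\nabla_{\theta^{(t_2)}}\log q_{\ttheta})^\top]$. These are $u_t u_t^\top$, giving $K^{(t)}(\ttheta)$ when $t_1=t_2=t$, and the $v_\tau v_\tau^\top$ pairs for $\tau\ge t_2+1$, giving
\begin{equation*}
\sum_{\tau=t_2+1}^{T-1}(\nabla_{\theta^{(t_1)}}\mu^{(\tau)}_{\ttheta})\,M^{(\tau)}(\ttheta)\,(\nabla_{\theta^{(t_2)}}\mu^{(\tau)}_{\ttheta})^\top .
\end{equation*}
For $t_1<t_2$, the term $u_{t_2}$ has nothing to pair with, since the first factor contains only $u_{t_1}$ and $v_\tau$ with $\tau\ne$ the slot of $u_{t_2}$.

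Finally, I factor each $\nabla_{\theta^{(t_i)}}\mu^{(\tau)}_{\ttheta}$ as $(\nabla_{\theta^{(t_i)}}\mu^{(t_2+1)}_{\ttheta})$ times the chain $(\nabla_{\mu^{(t_2+1)}_{\ttheta}}\mu^{(t_2+2)}_{\ttheta})\cdots(\nabla_{\mu^{(\tau-1)}_{\ttheta}}\mu^{(\tau)}_{\ttheta})$. The remaining sum is then exactly the unrolled form of the recursion for $G^{(t_2+1)}(\ttheta)$, which can be checked by a short backward induction from $G^{(T)}(\ttheta)=0$. This yields both the diagonal and the off-diagonal formulas.
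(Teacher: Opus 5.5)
Your proposal is correct and follows the paper's proof. It uses the same split of $\nabla_{\theta^{(t)}}\log q_{\ttheta}$ into the policy score plus the propagated kernel scores $(\nabla_{\theta^{(t)}}\mu^{(\tau)}_{\ttheta})\,v_\tau$, the same tower-property argument that kills all cross terms, and the same chain-rule factorization through $\nabla_{\mu^{(t_2+1)}_{\ttheta}}\mu^{(\tau)}_{\ttheta}$ that gives the unrolled form of $G^{(t_2+1)}(\ttheta)$. Your extra step on the kernel score refines the paper's argument rather than departing from it: the paper asserts that this score has zero conditional mean via $\sum_x\nabla_\zeta p_\zeta(x)=0$, which can fail for an arbitrary ambient extension of $p^{(t)}_\mu$. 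Your observation closes that gap: the conditional mean is a multiple of $\bm 1$, and $\nabla_{\theta^{(t)}}\mu^{(\tau)}_{\ttheta}$ annihilates it (in the paper's transposed-Jacobian convention it is the rows that sum to zero, so your ``columns'' should read ``rows''). Note that this annihilation, like the zero conditional mean of the policy score you take for granted, relies on $\sum_a\pi^{(t)}_{\theta^{(t)}}(a|s)=1$ persisting under ambient perturbations of $\theta^{(t)}$, as it does for the softmax parameterization where the proposition is applied.
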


\section{State-Partitioned Tabular Policies}
\label{sec:statepartitioned}

There are several challenges to calculate the natural gradient from the Fisher information matrix.
First, the matrices $\nabla_{\theta^{(t)}} \mu^{(t+1)}_{\ttheta}$ and $\nabla_{\mu^{(t)}_{\ttheta}} \mu^{(t+1)}_{\ttheta}$
must be calculated, which may be expensive depending on the size of the state space
and the dimension of the parameter space.
Second, appropriate matrix products must be calculated to find the $G^{(t)}(\ttheta)$.
Third, the policy Fisher information matrix $F(\ttheta)$ needs to be inverted, which
may again come at a significant expense.
In this section we describe and justify a computationally tractable algorithm
in the important special case when the policy is a \emph{state-partitioned tabular policy}.

\subsection{Preliminaries}
A state-partitioned tabular policy requires a deterministic partition of the state space $\SSS$ into disjoint subsets $\SSS_z$ indexed by a finite set $\ZZZ$, where each subset $\SSS_z$ is assigned to a distinct \emph{tabular expert} $z \in \ZZZ$.
The policy parameter $\theta^{(t)}$ becomes a collection
$(\theta^{(t)}_z)_{z \in \ZZZ}$.
Each $\theta^{(t)}_z$ is a pmf over the action set $\AAA$, so the set
$\Theta^{(t)}$ of feasible parameters is the $|\ZZZ|$-fold Cartesian product of the probability simplex over $\AAA$.
The policy is then given by
\begin{equation}
  \label{eq:partitionedtabular}
  \pi^{(t)}_{\theta^{(t)}}(a|s) = \theta^{(t)}_{z(s)}(a),
  \end{equation}
where $z(s)$ denotes the deterministic assignment function mapping state $s$ to its corresponding expert.
A \emph{standard tabular policy} corresponds to the special case
when each expert is identified with a corresponding state so that $|\ZZZ|=|\SSS|$ and each $\SSS_z$ is a singleton.
A policy parameter vector $\ttheta$ is a \emph{pure policy} if, for each $z$ and $t$,
the support of $\theta_z^{(t)}$ is a singleton.

The forward iterative scheme takes the form
\begin{equation}
  \label{eq:expertforward}
\begin{aligned}
  \mu^{(0)}_{\ttheta}(s')&=\mu^{(0)}(s')\\
  \mu^{(t+1)}_{\ttheta}(s') &= \sum_{s,a} \mu^{(t)}_{\ttheta}(s) \times \theta^{(t)}_{z(s)}(a) \times p^{(t)}_{\mu^{(t)}_{\ttheta}} (s'| s, a)
\end{aligned}
\end{equation}
and the backward iterative scheme takes the form
\begin{equation}
  \label{eq:expertbackward}
  \begin{aligned}
    \sigma^{(T)}_{\ttheta}(s)&=r^{(T)}_{\mu^{(T)}_{\ttheta}}(s)+\sum_{\tilde s}\mu^{(T)}_{\ttheta}(\tilde s)\times \frac{\partial}{\partial \mu(s)}  r^{(T)}_{\mu^{(T)}_{\ttheta}}(\tilde s)\\
     \sigma^{(t)}_{\ttheta}(s) &=
\sum_a \theta^{(t)}_{z(s)} (a) \times Q^{(t)}_{\mu^{(t)}_{\ttheta}, \sigma^{(t+1)}_{\ttheta}}(s, a) + 
\sum_{\tilde s, a} \mu^{(t)}_{\ttheta} (\tilde s) \times \theta^{(t)}_{z(\tilde s)} (a) \times
\frac{\partial}{\partial \mu(s)} Q^{(t)}_{\mu^{(t)}_{\ttheta},
  \sigma^{(t+1)}_{\ttheta}} (\tilde s, a),
\end{aligned}
\end{equation}
where we use, as before,
\begin{equation}
  \label{eq:expertQ}
Q^{(t)}_{\mu,\sigma'}(s,a)=  r^{(t)}_{\mu}(s,a) + \sum_{s'} p^{(t)}_{\mu}(s'|s,a)\times \sigma'(s').
\end{equation}

The function $\overline Q^{(t)}_{z,\ttheta}$ on $\AAA$
associated with time $t$ and expert $z$ defined via
\begin{equation}
  \label{eq:barQ}
  \overline Q^{(t)}_{z, \ttheta}(a) = \begin{cases}
    \sum_{s\in\SSS_z} \mu^{(t)}_{\ttheta} (s|\SSS_z)\times
    Q^{(t)}_{\mu^{(t)}_{\ttheta},\sigma^{(t+1)}_{\ttheta}}(s,a) & \text{if } \mu_{\ttheta}^{(t)}(\SSS_z) > 0\\
    0 &\text{otherwise}
    \end{cases}
  \end{equation}
is a weighted average of the $Q^{(t)}$ function and plays a pivotal role in our algorithm.
Here the conditional pmf on states $\mu^{(t)}_{\ttheta}(s|\SSS_z)$ associated with time $t$ and expert $z$
  with $\mu^{(t)}_{\ttheta}(\SSS_z)>0$ is given by, for $s\in\SSS_z$, 
\begin{equation*}
 \mu^{(t)}_{\ttheta}(s|\SSS_z) =\frac{\mu^{(t)}_{\ttheta}(s)}{\mu^{(t)}_{\ttheta}(\SSS_z)}.
\end{equation*}
For a standard tabular policy, we simply have 
$\overline Q^{(t)}_{z, \ttheta}(a) =Q^{(t)}_{\mu^{(t)}_{\ttheta}, \sigma^{(t+1)}_{\ttheta}} (z, a)$.

We close this subsection with a characterization of the local maxima of $ J_{\ttheta}$ for state-partitioned tabular policies.
The gradient $\nabla_{\ttheta} J_{\ttheta}$ consists of
$T\times |\ZZZ|$ subvectors of the form $\nabla_{\theta^{(t)}_z} J_{\ttheta}$
corresponding to time $t$ and expert $z$.
By the policy gradient theorem (Theorem~\ref{thm:pgt}), in view of (\ref{eq:partitionedtabular}), the elements of this subvector can be written as
\begin{equation}
  \label{eq:omegast}
  \frac{\partial J_{\ttheta} }{\partial \theta_z^{(t)}(a)}
  = \mu^{(t)}_{\ttheta}(\SSS_z) \times   \overline Q^{(t)}_{z, \ttheta}(a).
\end{equation}

For the proposition below, keep in mind that the domain of $J_{\ttheta}$  is a product of probability simplices, which is a closed, convex set that includes its boundary points (i.e., points with zero entries are permitted).

\begin{proposition}
  \label{prop:criticalpoints}
Consider a state-partitioned tabular policy.
If a policy parameter vector $\ttheta$ is a local maximum of $J_{\ttheta}$ then, for each time $t$ and expert $z$  with $\mu^{(t)}_{\ttheta}(\SSS_z)>0$, the support of $\theta^{(t)}_z$ is contained in $\argmax_a \overline Q^{(t)}_{z, \ttheta}(a) $.  
  \end{proposition}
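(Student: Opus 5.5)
The plan is to argue by contraposition using first-order optimality on the product of simplices. Suppose $\ttheta$ is a local maximum, and fix $t$ and $z$ with $\mu^{(t)}_{\ttheta}(\SSS_z)>0$. Suppose some action $a_0$ with $\theta^{(t)}_z(a_0)>0$ fails to lie in $\argmax_a \overline Q^{(t)}_{z,\ttheta}(a)$. Pick $a^*\in\argmax_a \overline Q^{(t)}_{z,\ttheta}(a)$, so that $\overline Q^{(t)}_{z,\ttheta}(a^*)>\overline Q^{(t)}_{z,\ttheta}(a_0)$. We will build a feasible direction along which $J$ strictly increases, which contradicts local maximality.

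\textbf{The direction.} Let $d$ be the perturbation that changes only the block $\theta^{(t)}_z$. It adds $+1$ to coordinate $a^*$ and $-1$ to coordinate $a_0$, and leaves every other block and coordinate unchanged. For $0\le\epsilon\le\theta^{(t)}_z(a_0)$, the point $\ttheta+\epsilon d$ stays in the product of simplices: entries remain nonnegative and sum to one. The only constraint that could bind is $\theta^{(t)}_z(a_0)-\epsilon\ge0$, and it holds because $\theta^{(t)}_z(a_0)>0$.

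\textbf{The one-sided derivative.} By the smooth extension property, the map $\epsilon\mapsto J_{\ttheta+\epsilon d}$ is continuously differentiable on $[0,\theta^{(t)}_z(a_0)]$. This follows because $J$ is a composition of the smooth maps in (\ref{eq:mutdef}) and (\ref{eq:Jtheta}). Its right derivative at $0$ equals $d^\top\nabla_{\ttheta}J_{\ttheta}$. By (\ref{eq:omegast}), which comes from Theorem~\ref{thm:pgt}, this derivative is
\begin{equation*}
\mu^{(t)}_{\ttheta}(\SSS_z)\times\left(\overline Q^{(t)}_{z,\ttheta}(a^*)-\overline Q^{(t)}_{z,\ttheta}(a_0)\right)>0 .
\end{equation*}
So $J_{\ttheta+\epsilon d}>J_{\ttheta}$ for all sufficiently small $\epsilon>0$, and these points lie arbitrarily close to $\ttheta$ in the feasible set. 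This contradicts local maximality. Hence every action in the support of $\theta^{(t)}_z$ lies in the argmax.

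\textbf{The delicate point.} The main subtlety is justifying that the directional derivative along a tangent direction of the simplex equals the inner product with the ambient gradient, and that the components in (\ref{eq:omegast}) are the right ones to use. The gradient depends on the chosen extension only through its component normal to the affine hull. That normal component is a constant shift across the coordinates $a$ of block $z$, and our direction $d$ has coordinates summing to zero, so it annihilates the shift. We also need the smooth extension property to hold at the boundary point $\ttheta$, where zero entries are permitted. This is exactly the conventions section's assumption that the gradient extends by continuity to boundary points, combined with the mean value theorem on the segment.

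Finally, we need that $\mu^{(t')}_{\ttheta+\epsilon d}(s)>0$ wherever the kernels and rewards are differentiated. The model assumptions give smoothness only on the sets where $\mu(s)>0$. In the policy gradient theorem these derivatives are weighted by $\mu(s)$ or taken at reachable states, so I would take differentiability of $J$ on the feasible set as provided by Theorem~\ref{thm:pgt}.
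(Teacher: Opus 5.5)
Your argument is correct and is essentially the paper's: both rest on the first-order condition $d^\top \nabla_{\theta^{(t)}_z} J_{\ttheta} \le 0$ for every feasible zero-sum direction $d$, combined with (\ref{eq:omegast}), and both assume differentiability of $J_{\ttheta}$ at boundary points of the simplices in the same way. The only difference is cosmetic. Your single mass-transfer direction $e_{a^*}-e_{a_0}$ gives the conclusion in one step, whereas the paper first shows that $\overline Q^{(t)}_{z,\ttheta}$ is constant on the support of $\theta^{(t)}_z$ and then that off-support values do not exceed this constant.
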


In the Markov chain setting, a global maximum can be found using policy iteration,
since any pure policy
can be improved by selecting the best action given the incumbent state-action value function.
A pure policy is (globally) optimal if it cannot be improved in this manner.
Although the preceding proposition states that each (local) maximum
satisfies a similar ``fixed-point'' equation, unlike in the Markov chain setting,
there is no guarantee that repeatedly updating
pure policies by maximizing the incumbent $\overline Q^{(t)}_{z, \ttheta}$
functions results in a converging sequence
of policies, let alone that this sequence converges to a local maximum.
This is because in the nonlinear setting the $\overline Q^{(t)}_{z, \ttheta}$ are \emph{not}, in general, state-action value functions.

\subsection{Exponentiated Q-Ascent Algorithm}
Our algorithm starts from an arbitrary $\ttheta$ with all $\theta^{(t)}_z$ in the (relative) interior of the probability simplex over $\AAA$,
and all iterates continue to lie in this (relative) interior.
The updating equation (\ref{eq:expupdatepi}) below, together with the connection to the natural gradient ascent algorithm we develop in the next subsection, motivates the name \emph{exponentiated Q-ascent algorithm}.

\paragraph{Algorithm steps.} 
Given a learning rate $\eta>0$, we execute these steps:
\begin{enumerate}
\item Calculate the $\mu^{(t)}_{\ttheta}$ with forward iterative scheme (\ref{eq:expertforward}).
\item Calculate the $\sigma^{(t)}_{\ttheta}$ with backward iterative scheme (\ref{eq:expertbackward}).
  \item Calculate the $\overline Q^{(t)}_{z, \ttheta}$ using the definitions in (\ref{eq:expertQ}) and (\ref{eq:barQ}).
\item Update $\ttheta$ via, for every $t$ and $z$,
  \begin{equation}
    \label{eq:expupdatepi}
    \theta^{(t)}_z(a)  \leftarrow \frac{\theta^{(t)}_z(a)\times \exp\left(\eta\times   \overline Q^{(t)}_{z, \ttheta}(a) \right)}
    {\sum_{\tilde a} \theta^{(t)}_z(\tilde a)\times \exp\left(\eta\times   \overline Q^{(t)}_{z, \ttheta}(\tilde a) \right)}.
\end{equation}
(If $\mu^{(t)}_{\ttheta}(\SSS_z) = 0$, then $\overline Q^{(t)}_{z, \ttheta}$ is zero and it is understood that no update takes place.)
The update uses the current values of $\theta^{(t)}_z$ on the right-hand side to compute all new values of $\theta^{(t)}_z$
simultaneously before overwriting.
\item
Go to Step 1 until 
\begin{equation*}
\sum_{t=0}^{T-1}\sum_{z\in\ZZZ} \mu^{(t)}_{\ttheta}(\SSS_z) \times \Var_{A\sim \theta^{(t)}_z} \left(\overline Q^{(t)}_{z, \ttheta}(A)\right)<\epsilon.
\end{equation*}
Here
\begin{equation*}
\Var_{A\sim \theta^{(t)}_z} (\overline Q^{(t)}_{z,\ttheta}(A)) = \sum_a \theta_z^{(t)}(a) \times (\overline Q^{(t)}_{z,\ttheta}(a))^2 - 
\left(E_{A\sim \theta^{(t)}_z} (\overline Q^{(t)}_{z,\ttheta}(A))\right)^2,
\end{equation*}
where
\begin{equation*}
  E_{A\sim \theta^{(t)}_z} (\overline Q^{(t)}_{z,\ttheta}(A)) = \sum_a \theta_z^{(t)}(a) \times \overline Q^{(t)}_{z,\ttheta}(a).
\end{equation*}
\end{enumerate}

\paragraph{Stopping criterion.}
Let $\hat \theta^{(t)}_z$ denote the updated $\theta^{(t)}_z$.
Recall that the Kullback-Leibler (KL) divergence between pmfs $p_1$ and $p_2$ on the same finite set is defined via
\begin{equation*}
  \KL{p_1}{p_2} = \sum_y p_1(y) \log\left(\frac{p_1(y)}{p_2(y)}\right).
\end{equation*}
The right-hand side should be interpreted as a sum over the support of $p_1$ only, and
it should be interpreted as $\infty$ when the denominator is zero for at least one of the summands.
From the updating rule (\ref{eq:expupdatepi}) we have that
\begin{align*}
  \KL{\theta^{(t)}_z}{\hat \theta^{(t)}_z} &=
                             \sum_a \theta^{(t)}_z(a) \times\log \left(\frac{\theta^{(t)}_z(a)}{\hat\theta^{(t)}_z(a)}\right)\\
  &= \sum_a\theta^{(t)}_z(a) \times \left(\log\left(\sum_{\tilde a} \theta^{(t)}_z(\tilde a)\times\exp\left(\eta\times \overline Q^{(t)}_{z, \ttheta}(\tilde a)\right)\right)- \eta \times \overline Q^{(t)}_{z, \ttheta}(a)\right)\\
  &=\log\left(\sum_{\tilde a} \theta^{(t)}_z(\tilde a)\times\exp\left(\eta\times \overline Q^{(t)}_{z, \ttheta}(\tilde a)\right)\right)- \eta\times E_{A\sim \theta^{(t)}_z} (\overline Q^{(t)}_{z, \ttheta}(A)).
  \end{align*} 
The first term can be seen as a log-moment generating function of $\overline Q^{(t)}_{z, \ttheta}$ under $\theta^{(t)}_z$,
and its Taylor expansion around $\eta=0$ is
  \begin{equation*}
\log\left(\sum_{\tilde a} \theta^{(t)}_z(\tilde a)\times\exp\left(\eta\times \overline Q^{(t)}_{z, \ttheta}(\tilde a)\right)\right)\approx
    \eta\times E_{A\sim \theta^{(t)}_z} (\overline Q^{(t)}_{z, \ttheta}(A)) + \frac {\eta^2}2 \times \Var_{A\sim \theta^{(t)}_z} (\overline Q^{(t)}_{z, \ttheta}(A)).
  \end{equation*}
So the stopping criterion is approximately
\begin{equation}
  \label{eq:stoppingapprox}
\sum_{t=0}^{T-1} \sum_{z\in\ZZZ} \mu^{(t)}_{\ttheta}(\SSS_z)\times \KL{\theta^{(t)}_z}{\hat \theta^{(t)}_z} <\frac{\eta^2}2 \epsilon.
\end{equation}
It follows from (\ref{qdef0}) and the chain rule of Kullback-Leibler divergence (see \cite[Thm.\ 2.5.3]{cover1999elements}) that
  \begin{multline*}
    \KL{q_{\ttheta}}{q_{\hat\ttheta}} =\sum_{t=0}^{T-1} \sum_{z\in\ZZZ} \mu^{(t)}_{\ttheta}(\SSS_z)\times \KL{\theta^{(t)}_z} {\hat \theta^{(t)}_z}
\\    + \sum_{t=0}^{T-1} \sum_{s,a} \mu^{(t)}_{\ttheta}(s)\times \theta^{(t)}_{z(s)}(a) \times \KL{p^{(t)}_{\mu^{(t)}_{\ttheta}}(\cdot|S^{(t)}=s, A^{(t)}=a)}{p^{(t)}_{\mu^{(t)}_{\hat\ttheta}}(\cdot|S^{(t)}=s, A^{(t)}=a)}.
\end{multline*}
The first term represents the contribution due to the immediate effect of the policy change, while the second
term represents the contribution due to the effect of the policy change on the transition probabilities.
Since the left-hand side of (\ref{eq:stoppingapprox}) coincides with the first term,
the stopping criterion can be interpreted as a proxy for requiring that  $\KL{q_{ \ttheta}}{q_{\hat\ttheta}}$ be small
whenever the contribution of the second term is of second-order.

\paragraph{Adaptive step size variant.}
Our algorithm naturally supports an adaptive updating scheme inspired by the line search mechanism of Trust Region Policy Optimization (TRPO) \cite{Schulman2015TRPO}, which dynamically manages the policy update step size. At each iteration, a candidate policy is generated via (\ref{eq:expupdatepi}) and the objective value of this candidate policy is evaluated. If this value is an improvement over the objective value of the current policy, the update is accepted and the original learning rate $\eta$ is used for the next iteration. Otherwise, the learning rate $\eta$ is reduced (e.g., halved), the candidate is recomputed, and its corresponding objective value is evaluated. This process continues until there is an improvement in objective value. The resulting adaptive scheme prevents overshooting and ensures the update remains within a `trusted' region of reliable improvement, thus mirroring the approach used in TRPO.

\subsection{Approximate Natural Gradient}

In this subsection we show that the exponentiated Q-ascent
algorithm can be viewed as an approximate version of natural gradient
ascent after appropriately transforming the parameter space.

The updating scheme maintains pmfs $\theta^{(t)}_z$ in the (relative) interior of the probability simplex over $\AAA$.
Define the centered log-probabilities
\begin{equation*}
\gamma^{(t)}_{z,a}=\log(\theta^{(t)}_z(a))-\frac1{|\AAA|}\sum_{\tilde a\in \AAA}\log(\theta^{(t)}_z(\tilde a)),
\end{equation*}
so that $\sum_a \gamma^{(t)}_{z,a}=0$.
There is a one-to-one correspondence between the pmf $\theta^{(t)}_z$  and the vector $\gamma^{(t)}_z = (\gamma^{(t)}_{z,1},\ldots,\gamma^{(t)}_{z,|\AAA|})$ with $\sum_a \gamma^{(t)}_{z,a}=0$, with the inverse given by the \emph{softmax}:
\begin{equation}
  \label{eq:softmax}
  \theta^{(t)}_z (a) = \frac{\exp(\gamma^{(t)}_{z,a})}{\sum_{\tilde a} \exp(\gamma^{(t)}_{z,\tilde a})}.
  \end{equation}
Accordingly, any state-partitioned tabular policy in the relative interior of $\TTheta$ can be reparameterized by $\ggamma$, the collection of $\gamma^{(t)}_z$ for each time $t$ and expert $z$.
As long as we work in this interior, 
the policy can be defined directly using the $\gamma^{(t)}_z$  parameters via
\begin{equation*}
\pi^{(t)}_{\gamma^{(t)}}(a|s) = \frac{\exp(\gamma^{(t)}_{z(s),a})}{\sum_{\tilde a \in \AAA} \exp(\gamma^{(t)}_{z(s),\tilde a})}.
\end{equation*}
This reparameterization allows us to work entirely with vectors $\ggamma$ for which $\sum_a\gamma^{(t)}_{z,a}=0$ for each time $t$ and expert $z$,
bypassing the collection of pmfs $\ttheta$.
To emphasize this reparametrization, we denote the parameter space by $\GGamma$ 
rather than $\TTheta$.
The objective function becomes $J_{\ggamma}$ and the function $\overline Q^{(t)}_{z,\ttheta}$ becomes $\overline Q^{(t)}_{z,\ggamma}$.
We let $\ttheta(\ggamma)$ denote the softmax transformation back to the pmf representation.

In lieu of working with the natural gradient, our algorithm uses an \emph{approximate natural gradient} $\widetilde \nabla_{\ggamma} J_{\ggamma}$ defined as 
\begin{equation*}
\widetilde \nabla_{\ggamma} J_{\ggamma} = \tilde F(\gamma)^\dagger \times \nabla_{\ggamma} J_{\ggamma}, 
\end{equation*}
which is identical in form to the natural gradient, except that it uses the block diagonal matrix
$\tilde F(\ggamma)$ with diagonal blocks 
$\tilde F^{(t,t)}(\ggamma) = K^{(t)}(\ggamma)$ instead of the matrix $F(\ggamma)$. That is,
$\tilde F(\ggamma)$ is obtained by setting $G^{(t)}(\ggamma)=0$ for all $t$ in Proposition~\ref{prop:fisherrecursion},
which amounts to linearizing the nonlinear Markov chain for purposes of the information geometry.
We emphasize that, since the approximate natural gradient is the
product of $\tilde F(\ggamma)^\dagger$ and $\nabla_{\ggamma} J_{\ggamma}$, these nonlinearities \emph{are} incorporated in the approximate natural gradient as they remain present in the standard gradient term $\nabla_{\ggamma} J_{\ggamma}$.
The matrix $\tilde F(\ggamma)$ exhibits a two-level block-diagonal structure.  
At the first level, it is block-diagonal across time indices, with each diagonal block corresponding to a fixed time step.  
At the second level, each time-specific block is itself block-diagonal over experts.  
The diagonal block associated with the pair $(t, z)$ is given by  
  \begin{equation*}
    \tilde F^{(t,z), (t,z)}(\ggamma) =
    \mu^{(t)}_{\ggamma}(\SSS_z) \times \hat F(\gamma_z^{(t)}),
    \end{equation*}
where, for $\psi\in\R^{|\AAA|}$,  
\begin{equation*}
  \hat F(\psi) = \sum_a \hat\pi_\psi(a)\times (\nabla_\psi \log \hat \pi_\psi(a))\times (\nabla_\psi \log \hat \pi_\psi(a))^\top
\end{equation*}
is the Fisher information matrix corresponding to the softmax distribution $\hat \pi_\psi$ defined via
  \begin{equation}
    \label{eq:hatpi}
    \hat\pi_\psi (a) = \frac{\exp(\psi_a)}{\sum_{\tilde a} \exp(\psi_{\tilde a})}.
    \end{equation}

The next proposition shows that the elements of $\widetilde \nabla_{\ggamma} J_{\ggamma}$ are appropriately centered values of the $\overline Q^{(t)}_{z,\ggamma}$.

\begin{proposition}
  \label{prop:naturalgradientmoesoft}
  Consider a state-partitioned tabular policy parameterized by the softmax parameter vector $\ggamma$.
  The subvector $\widetilde\nabla_{\gamma_{z,a}^{(t)}}  J_{\ggamma}$ of the approximate natural gradient can be written as 
\begin{equation} \label{eq:naturalgradientmoesoft}
\widetilde\nabla_{\gamma_{z,a}^{(t)}}  J_{\ggamma} =   
\overline Q^{(t)}_{z,\ggamma}(a) - \frac1{|\AAA|} \sum_{\tilde a} \overline Q^{(t)}_{z,\ggamma}(\tilde a).
\end{equation}
\end{proposition}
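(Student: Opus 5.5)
Because $\tilde F(\ggamma)$ is block diagonal over pairs $(t,z)$, the Moore--Penrose inverse of $\tilde F(\ggamma)$ is block diagonal with blocks equal to the pseudo-inverses of the diagonal blocks. The plan is therefore to reduce the claim to a single pair $(t,z)$ with $\mu^{(t)}_{\ggamma}(\SSS_z)>0$, and to show that
\begin{equation*}
\bigl(\mu^{(t)}_{\ggamma}(\SSS_z)\hat F(\gamma^{(t)}_z)\bigr)^\dagger \nabla_{\gamma^{(t)}_z} J_{\ggamma} = \overline Q^{(t)}_{z,\ggamma} - \tfrac1{|\AAA|}\textstyle\sum_{\tilde a}\overline Q^{(t)}_{z,\ggamma}(\tilde a)\mathbf 1 .
\end{equation*}
When $\mu^{(t)}_{\ggamma}(\SSS_z)=0$, both the block and the gradient vanish, and $\overline Q$ is zero by definition, so both sides are $0$.

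First I compute the gradient subvector. Write $\theta=\hat\pi_{\psi}$ with $\psi=\gamma^{(t)}_z$ (treated as a free vector in $\R^{|\AAA|}$, as the definition of $\hat F$ suggests). The Jacobian of softmax is $\nabla_\psi \hat\pi_\psi = \diag(\theta)-\theta\theta^\top$, which is symmetric. By the chain rule and (\ref{eq:omegast}),
\begin{equation*}
\nabla_{\gamma^{(t)}_z}J_{\ggamma} = \mu^{(t)}_{\ggamma}(\SSS_z)\,(\diag(\theta)-\theta\theta^\top)\,\overline Q ,
\end{equation*}
where $\overline Q=\overline Q^{(t)}_{z,\ggamma}$. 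Next, since $\nabla_\psi\log\hat\pi_\psi(a)=e_a-\theta$, a direct computation gives $\hat F(\psi)=\sum_a\theta(a)(e_a-\theta)(e_a-\theta)^\top=\diag(\theta)-\theta\theta^\top=:B$. The factor $\mu^{(t)}_{\ggamma}(\SSS_z)$ cancels, because $(cB)^\dagger=c^{-1}B^\dagger$. It therefore remains to show that $B^\dagger B\,\overline Q$ equals the centered vector $\overline Q-\frac1{|\AAA|}(\mathbf 1^\top\overline Q)\mathbf 1$.

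$B^\dagger B$ is the orthogonal projection onto the range of the symmetric matrix $B$, so the key step is identifying that range. Since $\theta$ has full support, $\diag(\theta)$ is invertible. If $Bx=0$, then $\theta_a x_a=\theta_a(\theta^\top x)$ for all $a$, so $x$ is a multiple of $\mathbf 1$. Conversely, $B\mathbf 1=\theta-\theta=0$. Hence the kernel is $\mathrm{span}(\mathbf 1)$, and the range is $\mathbf 1^\perp=\{x:\sum_a x_a=0\}$. Projecting $\overline Q$ onto $\mathbf 1^\perp$ yields exactly the centered vector, which proves (\ref{eq:naturalgradientmoesoft}).

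The main obstacle is a bookkeeping issue in the parameterization: $\ggamma$ is constrained to the subspace $\sum_a\gamma_{z,a}^{(t)}=0$. I must check that computing gradients and Fisher blocks in ambient coordinates (per the smooth extension convention) is consistent with that constraint. This works out because both the gradient and the range of $B$ already lie in $\mathbf 1^\perp$, the tangent space of the constraint. Restricting to the subspace therefore changes nothing, and the pseudo-inverse automatically produces the tangent-space answer.
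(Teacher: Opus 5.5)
Your proof is correct, but it reaches the key step by a different route from the paper.

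The paper first proves a separate lemma giving an explicit formula for the Moore--Penrose inverse, $\hat F(\psi)^\dagger = P\,\diag(\hat\pi_\psi)^{-1}P$. It verifies this by checking that the matrix inverts $\hat F(\psi)$ on $\bm 1^\perp$ and annihilates $\bm 1$. It then writes the gradient block as $\mu^{(t)}_{\ggamma}(\SSS_z)\sum_{\tilde a}\hat\pi(\tilde a)\,\overline Q^{(t)}_{z,\ggamma}(\tilde a)\,(e_{\tilde a}-\hat\pi)$ and pushes $P\,\diag(\hat\pi)^{-1}$ through each summand, obtaining $Pe_{\tilde a}$.

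You instead observe that the softmax Jacobian coincides with the softmax Fisher matrix. Hence the gradient block is exactly $\mu^{(t)}_{\ggamma}(\SSS_z)\,\hat F(\gamma^{(t)}_z)\,\overline Q^{(t)}_{z,\ggamma}$, and the approximate natural gradient becomes $\hat F^\dagger\hat F\,\overline Q^{(t)}_{z,\ggamma}$. That is an orthogonal projection, so you only need the kernel of $\hat F$, never a formula for $\hat F^\dagger$.

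This is the tabular instance of Kakade's compatible-function-approximation observation, and it is shorter and less computational. It also transfers verbatim to the step in the proof of Theorem~\ref{thm:approximatenaturalgradient} where the paper reuses this argument with $p^{(t)}_{\mu}(s'|s,a)$ in place of $Q$. The paper's route, in exchange, yields a closed form for $\hat F(\psi)^\dagger$ that is of independent use.

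Your remaining bookkeeping is sound. The unreachable pairs are handled correctly, and the chain-rule step through (\ref{eq:omegast}) is well defined even though the ambient $\theta$-gradient is only determined up to blockwise multiples of $\bm 1$. That ambiguity is annihilated by $\diag(\theta)-\theta\theta^\top$.
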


This proposition shows that 
our exponentiated Q-ascent algorithm corresponds to 
the approximate natural gradient ascent scheme
\begin{equation}
  \label{eq:approxnaturalgradascient}
    \gamma^{(t)}_z \leftarrow    \gamma^{(t)}_z  + \eta \times \widetilde\nabla_{\gamma^{(t)}_z}  J_{\ggamma}.
    \end{equation}
Indeed, upon substitution of this into (\ref{eq:softmax})
using Proposition~\ref{prop:naturalgradientmoesoft},
we obtain the updating scheme (\ref{eq:expupdatepi}).

\subsection{Performance Guarantee}
The justification for using our exponentiated Q-ascent algorithm
is provided by Theorem~\ref{thm:approximatenaturalgradient} below.  
It shows that it
is approximately an appropriate \emph{natural} gradient ascent algorithm
near any pure policy, even though this natural gradient ascent 
scheme for the softmax parameters
  \begin{equation*} 
    \gamma^{(t)}_z \leftarrow    \gamma^{(t)}_z  + \eta \times \widehat\nabla_{\gamma^{(t)}_z}  J_{\ggamma}
    \end{equation*}
is computationally intractable as discussed earlier.
This result justifies the use of the exponentiated Q-ascent algorithm, which replaces the natural gradient $\widehat\nabla_{\gamma^{(t)}_z}  J_{\ggamma}$ above 
with the approximate natural gradient $\widetilde\nabla_{\gamma^{(t)}_z}  J_{\ggamma}$ as in (\ref{eq:approxnaturalgradascient}),
for training nonlinear Markov decision problems.

The rank condition in the theorem below is an identifiability requirement.
Denote the set of `unreachable' time-expert pairs $(t, z)$ for which $\mu^{(t)}_{\ggamma}(\SSS_z)=0$ by $\UUU(\ggamma)$.
Since we work in ambient space when representing gradients, 
the number of columns of the matrix $F(\ggamma)$ is
$T \times |\ZZZ| \times |\AAA|$.
Adding a constant to each $\psi_a$ in (\ref{eq:hatpi})
  leaves the value of the right-hand side unchanged, so for each pair $(t,z)$ 
  the gradient of the log-trajectory probabilities vanishes along the direction
  where the same constant is added to the elements of $\gamma^{(t)}_z$.
The linear span of these directions, a subspace of dimension $T\times |\ZZZ|$, is contained in the null space of the Fisher information matrix $F(\ggamma)$. Furthermore,
for each $(t,z)$ such that $\mu^{(t)}_{\ggamma}(\SSS_z)=0$,
$\gamma^{(t)}_z$ has no effect on the trajectory probabilities, forming a subspace of dimension $|\AAA| - 1$ within the null space for each such pair.
We are left with an \emph{upper bound} on the rank of $F(\ggamma)$, and
are requiring that the rank of $F(\ggamma_n)$ equals this upper bound, i.e., there are no further reductions
due to, for example, redundant parameters, equivalent actions, and structural symmetries.

\begin{theorem}
  \label{thm:approximatenaturalgradient}
Let $\ttheta^*$ be a pure policy, and let $\{\ggamma_n\}$ be a sequence in $\GGamma$ with $\ttheta(\ggamma_n)\to \ttheta^*$ as $n\to \infty$.
  Define $\mu^{(t)}_{*} = \lim_{n\to\infty} \mu^{(t)}_{\ggamma_n}$
  and $R^{(t)}_{\mu} = \{(s, a,s'): \mu(s)>0, p^{(t)}_\mu(s'|s,a)>0\}$.
  Suppose that 
  \begin{enumerate}
  \item the set $\UUU(\ggamma_n)$ is eventually independent of $n$ and the null space of $F(\ggamma_n)$ 
    is a fixed subspace of dimension $T\times |\ZZZ|+(|\AAA|-1)\times |\UUU(\ggamma_n)|$;
    \item for all $t$, $\{s: \mu^{(t)}_{\ggamma_n}(s)>0\}$ eventually equals $\{s:\mu^{(t)}_{*}(s)>0\}$; and
  \item for all $t$, $R^{(t)}_{\mu^{(t)}_{\ggamma_n}}$ eventually equals $R^{(t)}_{\mu^{(t)}_{*}}$.    \end{enumerate}
Then
  \begin{equation*}
    \lim_{n\to\infty}\left\|\widehat \nabla_{\ggamma}   J_{\ggamma_n} -  \widetilde\nabla_{\ggamma}  J_{\ggamma_n}\right\|=0.
    \end{equation*}
  \end{theorem}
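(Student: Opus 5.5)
The plan is to exploit a factorization that is special to the softmax parameterization of a state-partitioned tabular policy: both the standard gradient and every block of $F(\ggamma)$ factor through the block-diagonal matrix $\tilde F(\ggamma)$. Write $K_n=\tilde F(\ggamma_n)$. This is block diagonal with $(t,z)$ block $\mu^{(t)}_{\ggamma_n}(\SSS_z)\,\hat F(\gamma^{(t)}_z)$, where $\hat F(\psi)=\diag(\hat\pi_\psi)-\hat\pi_\psi\hat\pi_\psi^\top$.

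\textbf{Step 1: factor the gradient.} Since $\nabla_\psi\hat\pi_\psi(a)=\hat\pi_\psi(a)(e_a-\hat\pi_\psi)$, we have $\sum_a\nabla_\psi\hat\pi_\psi(a)\,x_a=\hat F(\psi)\,x$ for any vector $x$. Combining this with (\ref{eq:omegast}) and the chain rule through the softmax gives
\[
\nabla_{\gamma^{(t)}_z}J_{\ggamma}=\mu^{(t)}_{\ggamma}(\SSS_z)\,\hat F(\gamma^{(t)}_z)\,\overline Q^{(t)}_{z,\ggamma}.
\]
Hence $\nabla_{\ggamma}J_{\ggamma_n}=K_n q_n$, where $q_n$ stacks the vectors $\overline Q^{(t)}_{z,\ggamma_n}$. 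This is the same computation that underlies Proposition~\ref{prop:naturalgradientmoesoft}.

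\textbf{Step 2: factor the Jacobians.} In (\ref{eq:Ptmuthetadef}) the kernel depends on $\mu^{(t)}_{\ggamma}$, which does not depend on $\gamma^{(t)}$. Therefore
\[
\nabla_{\gamma^{(t)}_z}\mu^{(t+1)}_{\ggamma}=\hat F(\gamma^{(t)}_z)\,\Bigl[\sum_{s\in\SSS_z}\mu^{(t)}_{\ggamma}(s)\,p^{(t)}_{\mu^{(t)}_{\ggamma}}(\cdot|s,a)^\top\Bigr]_{a}=K^{(t)}_z\,B^{(t)}_z,
\]
where $B^{(t)}_z$ has rows $\sum_{s\in\SSS_z}\mu^{(t)}_{\ggamma}(s\,|\,\SSS_z)\,p^{(t)}_{\mu^{(t)}_{\ggamma}}(\cdot|s,a)$ and is bounded by $1$ entrywise. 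Substituting into Proposition~\ref{prop:fisherrecursion}, every block of $F(\ggamma_n)-K_n$ has the form $K^{(t_1)}B^{(t_1)}\,[\text{Jacobian products}]\,G^{(t_2+1)}\,B^{(t_2)\top}K^{(t_2)}$. So
\[
F(\ggamma_n)=K_n+K_nE_nK_n
\]
for a symmetric positive semidefinite matrix $E_n$. (It is PSD because it is the Gram matrix of the transition-score contributions.)

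\textbf{Step 3: bound $E_n$.} I expect this to be the main obstacle. I need $\sup_n\|E_n\|<\infty$, which requires bounded $G^{(t)}(\ggamma_n)$ and bounded Jacobians $\nabla_{\mu^{(t)}}\mu^{(t+1)}$. The Jacobians are bounded by the smooth extension property together with continuity up to the boundary along the convergent sequence $\mu^{(t)}_{\ggamma_n}\to\mu^{(t)}_*$. The delicate part is $M^{(t)}$, since $\nabla_\mu\log p=\nabla_\mu p/p$ and $p$ may tend to $0$. Here conditions 2 and 3 are used. Eventually the summation in $M^{(t)}$ runs only over the fixed set $R^{(t)}_{\mu^{(t)}_*}$. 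On that set I would like $p^{(t)}_{\mu^{(t)}_{\ggamma_n}}(s'|s,a)\to p^{(t)}_{\mu^{(t)}_*}(s'|s,a)>0$, so the ratios stay bounded. The weights $\mu\,\pi$ are at most $1$, and any $(s,a)$ where these weights vanish in the limit contribute terms of the form $\mu\pi\,|\nabla p|^2/p$; I would check these stay bounded via $\sum_{s'}|\nabla p|^2/p$ being controlled on the fixed support. This is the step I would scrutinize most.

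\textbf{Step 4: compare pseudo-inverses.} Note that $K_n\to0$ along the pure-policy limit, because $\hat F(\psi)\to 0$ as $\hat\pi_\psi$ becomes a point mass. By condition 1, the null space of $F(\ggamma_n)$ is the fixed subspace $N$. Since $N\subseteq\ker K_n$ always and the dimensions match, $\ker K_n=N$ as well. Restricted to $N^\perp$, both $K_n$ and $F_n=K_n(I+E_nK_n)$ are invertible. For $n$ large, $I+E_nK_n$ is invertible because $\|E_nK_n\|\to0$. Then
\[
\widehat\nabla_{\ggamma}J_{\ggamma_n}=F_n^\dagger K_nq_n=(I+E_nK_n)^{-1}P_Nq_n\quad\text{(computed on }N^\perp),
\]
\[
\widetilde\nabla_{\ggamma}J_{\ggamma_n}=P_Nq_n,
\]
where $P_N$ is the orthogonal projection onto $N^\perp$. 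Some care is needed to keep the inverse inside $N^\perp$: I would write $F_n=K_n^{1/2}(I+K_n^{1/2}E_nK_n^{1/2})K_n^{1/2}$ and use the similarity $K_n^{\dagger1/2}(I+X)^{-1}K_n^{1/2}=(I+E_nK_n)^{-1}$ on $N^\perp$, where $X=K_n^{1/2}E_nK_n^{1/2}$. The difference of the two gradients is then
\[
-(I+E_nK_n)^{-1}E_nK_nP_Nq_n.
\]
Its norm is at most $C\,\|E_n\|\,\|K_n\|\,\|q_n\|$.

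\textbf{Step 5: conclude.} The vectors $q_n$ are bounded, since rewards, kernels and their $\mu$-derivatives are bounded along the convergent sequence, and hence so are the $\sigma^{(t)}$ and $\overline Q$. Combined with $\|K_n\|\to0$ and $\sup_n\|E_n\|<\infty$, the difference tends to zero, which proves the theorem.
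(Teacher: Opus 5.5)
Your proof is correct, but it assembles the ingredients differently from the paper. The shared ingredients are three: the $\gamma$-Jacobian of $\mu^{(t+1)}_{\ggamma}$ factoring through the softmax Fisher matrix; boundedness of $M^{(t)}$, $G^{(t)}$ and the one-step Jacobians, which comes from hypotheses 2--3 and continuity at $\mu^{(t)}_*$; and $\tilde F(\ggamma_n)\to 0$ at a pure policy.

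\textbf{The paper's route.} It proves $\widehat\nabla_{\ggamma}J-\widetilde\nabla_{\ggamma}J=[(\tilde F^\dagger F)^\dagger-B]\,\widetilde\nabla_{\ggamma}J$, where $B$ is the block projection onto the range of $\tilde F$ (your $P_N$). It then shows $\tilde F^\dagger F\to B$ blockwise: $\tilde F^{(t,t)\dagger}\nabla_{\gamma^{(t)}}\mu^{(t+1)}_{\ggamma}$ has centered $\overline p$ entries and is bounded, while $\nabla_{\gamma^{(t)}}\mu^{(t+1)}_{\ggamma}\to 0$. It concludes by continuity of Moore--Penrose inversion on matrices of constant rank.

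\textbf{Your route.} The single factorization $\nabla_{\gamma^{(t)}}\mu^{(t+1)}_{\ggamma}=K^{(t)}B^{(t)}$ contains both blockwise facts at once and gives $\tilde F^\dagger F=P_N+P_NE_nK_n$. Your explicit inverse on $N^\perp$ then replaces the continuity argument with a quantitative bound $O(\|E_n\|\,\|K_n\|\,\|q_n\|)$. You also make explicit the boundedness of $\widetilde\nabla_{\ggamma}J_{\ggamma_n}$, which the paper uses tacitly.

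\textbf{What your route additionally buys.} Since $F-K=KEK\succeq 0$ and $Kv=0$ implies $Fv=0$, you get $\ker F(\ggamma)=\ker\tilde F(\ggamma)$ for every $\ggamma\in\GGamma$. So the rank requirement in hypothesis 1 is automatic. Moreover, because your bound holds for each $n$ separately, you need neither a fixed null space nor constant-rank continuity of $\dagger$.

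\textbf{Two minor points.}
\begin{itemize}
\item The similarity in Step 4 should read $(K_n^{1/2})^\dagger(I+X)^{-1}K_n^{1/2}=(I+P_NE_nK_n)^{-1}$, as an operator on $N^\perp$. The uncompressed $(I+E_nK_n)^{-1}P_Nq_n$ differs from $\widehat\nabla_{\ggamma}J_{\ggamma_n}$ by an $N$-component of size $O(\|E_nK_n\|\,\|q_n\|)$, so your final bound is unaffected.
\item In Step 3 the vanishing weights $\mu\pi$ cause no trouble. Eventually the sum defining $M^{(t)}$ runs over the fixed finite set $R^{(t)}_{\mu^{(t)}_*}$. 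On that set $p^{(t)}_{\mu^{(t)}_{\ggamma_n}}\to p^{(t)}_{\mu^{(t)}_*}>0$, and $\nabla_\mu p^{(t)}$ stays bounded by continuity at $\mu^{(t)}_*$. Hence $\|\nabla_\mu\log p^{(t)}\|$ is uniformly bounded, which is exactly the paper's argument.
\end{itemize}
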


At first blush, this theorem may appear unsurprising.
In the Markov chain case we always have $\widehat \nabla_{\ggamma}   J_{\ggamma}=\widetilde\nabla_{\ggamma}  J_{\ggamma}$. 
In the nonlinear case, however, changes in the policy parameter $\gamma^{(t)}$ propagate into the transition probabilities \emph{beyond} time $t$.
The theorem states that, in softmax coordinates, these downstream effects do not substantially alter the natural-gradient direction near pure policies.

\section{A Dynamic Pricing Example}
\label{sec:dynamicpricingexample}

\subsection{Problem Description}
We consider a single station queueing system with the following characteristics. 
It consists of $n \geq 1$ homogeneous servers that provide some service
and a staging area (the `buffer') with room for $b\ge 0$ customers.
Customers seeking service may either be served immediately or, if there is space,
must wait in the buffer until a server becomes available.
Customers for which there no available space in the buffer are lost. 
Customers occupy a server while they are in service, and
this server can serve no other customer during that time.
They are served according to a nonpreemptive, nonidling service discipline.
All customers exit the system upon completion of service,
at which time they release their server.

We consider a discrete-time model with time indexed by the nonnegative integers,
over a fixed horizon $T>0$.
All service durations are independent and identically distributed,
and are independent of everything else.
Service durations are integer-valued, assumed to be at least one time period, and have a maximum value. 
We let $g$ denote the service duration pmf.  
We use the convention that customers
departing or arriving between times $t$ and $t+1$ do so
\emph{immediately before} time $t+1$,
with arrivals immediately being able to use servers that are freed up (if any).

Fix some time $t$.  The distribution of the number of customers arriving between times $t$ and $t+1$ 
depends on the \emph{price} of the service, which is the action $a$ for this system.  
For each price choice $a \in \AAA$, the number of arrivals follows a Poisson distribution with arrival rate $\lambda_a^{(t)}$.
The price choice $a$ can depend on the number of customers in the system at time $t$.  
Each customer who is able to enter the system is charged the quoted price $a$. 
There is a fixed, per-period cost $c_W$ charged to each customer waiting in the buffer.
A terminal penalty $c_T$ is incurred for each customer still in the system at time $T$.
There is also a service-level requirement: the probability that the number of customers in the system at time $t$ exceeds a pre-determined
threshold $\hat z$ should not exceed a pre-determined probability $\alpha$.

\subsection{Nonlinear Markov Decision Problem Representation}
\label{sec:nonlinearMC_DPexample}

\paragraph{State space.}
The QPLEX paradigm avoids the curse of dimensionality of a Markov chain representation by re-imagining the information requirements 
so as to work with a state space of much \emph{lower} dimension.
The state space $\SSS = \ZZZ\times\LLL$ consists of pairs $(z, \ell)$, 
where $z$ represents the number of customers in the system, and $\ell$  
represents the \emph{remaining service duration of a uniformly chosen at random customer in service}.  This state space is very small.  
Each $z$ is called a \emph{counter} and each $\ell$ is called a \emph{label}.
The symbols $z, z'$ and $\ell, \ell'$ shall be reserved for generic components of $\ZZZ$ and $\LLL$, respectively.
We shall simply write $z$ and $\ell$ for the components of state $s$.
Similarly, given a state $s'$, we shall write $z'$ and $\ell'$ for the components of $s'$.

Given a pmf $\mu$ on the state space $\SSS$, we write $\mu(z) = \sum_\ell \mu(z, \ell)$ for the value of the marginal of $\mu$ with respect to its first component.
If $\mu(z)>0$, we furthermore write $\mu_{|z}(\ell) = \mu(z,\ell)/\mu(z)$ for 
the conditional pmf of the label $\ell$ given the counter $z$ under $\mu$.

\paragraph{Markovian policy functions.}  
We consider a state-partitioned tabular policy
with $|\ZZZ|$ experts, each assigned to a specific counter $z$.  
Here $\SSS_z= \{z\}\times \LLL$ and note that $\mu(\SSS_z) = \mu(z)$.  
Thus, the parameter vector $\theta^{(t)}$ is a vector of $|\ZZZ|$ pmfs.

\paragraph{Transition functions.} In the QPLEX calculus the functions $p_\mu^{(t)}(s' | s, a)$ have the following specific form.  
There exist functions $\hat p_{\xi}^{(t)}(z',\ell' | z, a)$ parameterized by a pmf $\xi$ on $\LLL$ such that, whenever $\mu(z)>0$,
\begin{equation}
  \label{eq:kernelqplex}
  p^{(t)}_\mu(s'|s,a) = \hat p^{(t)}_{\mu_{|z}}(z',\ell' | z, a)
\end{equation}
for any pmf $\mu$ on $\SSS$, $s=(z,\ell)$, $a$, and $s'=(z',\ell')$.
We now describe these functions.  In what follows, $\xi$ denotes a generic pmf on $\LLL$.

Fix some time $t$, some pmf $\mu$ on $\SSS$, some state $(z, \ell)$ for which $\mu(z)>0$, and some action $a$. 
Under the QPLEX calculus, the transition function $\hat p_{\mu_{|z}}^{(t)}(z', \ell' | z, a)$ takes on the following specific `chain-rule' form: 
\begin{equation}
  \label{eq:qplexchainruleform}
\hat p^{(t)}_{\mu_{|z}}(z', \ell' | z, a) = \sum_{d} q_{\mu_{|z}}(d | z) \times \rho^{(t)}(z'|z, d, a) \times \sum_{k'} q(k' | z, d, z') \times q_{\mu_{|z}}(\ell' | z, k').
\end{equation}
We describe each term below.  

The first term represents the probability that $d$ customers complete service by time $t+1$.
The QPLEX paradigm
assumes the remaining service durations of the $\min(z, n)$ customers in service 
are i.i.d.\ with pmf given by $\mu_{|z}$.  Under this assumption each customer in service has probability 
$\mu_{|z}(1)$ of completing their service by time $t+1$, and so this probability is given by $q_{\mu_{|z}}(d | z)$, where
\begin{equation} \label{eq:qxidgivenz}
q_{\xi}(d | z) = \Pr\left[ \bin\left (\min(z,n), \xi(1)\right) = d \right].
\end{equation}

The second term represents the probability that there are $z'$ customers in the system at time $t+1$ given that the number of 
customers in the system at time $t$ is $z$, there are $d$ customers completing service during times $t$ and $t+1$, and the
price choice is $a$.  The number of customers arriving to the system follows a Poisson distribution with arrival rate $\lambda_a^{(t)}$.
There may not be room to admit all arriving customers to the system due to a full buffer. If there are currently $z$ customers in the system, $d$ of which leave during this period, 
then $n+b+d-z$ customers can be admitted. Therefore, 
\begin{equation*}
\rho^{(t)} (z' | z,d,a) 
= \begin{cases}
\textrm{Pr}(\textrm{Poisson}(\lambda_a^{(t)})=z'+d-z) &\text{if } z' < n + b ,\\
\textrm{Pr}(\textrm{Poisson}(\lambda_a^{(t)})\ge n+b+d-z) &\text{if } z' = n + b .
\end{cases}
\end{equation*}

It remains to represent the probability that a uniformly chosen at random customer in service at time $t+1$ has remaining service duration $\ell'$
given that the number of customers in the system at times $t$ and $t+1$ are $z$ and $z'$, respectively, and the number of customers completing service is $d$.  
This probability is obtained in two steps (the third and fourth terms)
by conditioning on the customer's type at time $t+1$:  Either a customer is continuing their service, in which this customer is assigned type $k' = \old$, or
this customer is beginning their service, in which this customer is assigned type $k' = \new$.  
(A `new' customer could be a customer who had been waiting in the buffer.) 
The third term represents the probability that a uniformly chosen at random customer at time $t+1$ has type $k'$ given the values of $z$, $d$, and $z'$.   
Of the $\min(z', n)$ customers in service at time $t+1$, there are $\min(z, n) - d$ customers of type `old' and 
$\min(z',n) - (\min(z,n) - d)$ customers of type `new'. Thus, using the convention that the type is `new' if $z'=0$, the probability of selecting a customer of type $k'$ is   
\begin{equation}
  \label{eq:qk'}
q(k'|z,d,z') = 
\begin{cases}
1-\frac{\min(z, n)-d}{\min(z',n)} &\text{if } k'=\new \text{ and } z' > 0 \\
\frac{\min(z, n)-d}{\min(z',n)} &\text{if } k'=\old \text{ and } z' > 0 \\
1 &\text{if } k'= \new \text{ and } z' = 0 \\
0 &\text{otherwise.}
\end{cases}
\end{equation}
Note that if $\mu_{|z}(1) = 1$, then all customers in service complete their service, and so$q(\new | z, \min(z,n), z') = 1$ and $q(\old | z, \min(z,n), z') = 0$.

If the customer type is `new', then the 
probability this customer will have a remaining service duration of $\ell'$ is simply the service duration probability $g(\ell')$.  
If the customer type is `old', which can only happen if $\mu_{|z}(1) < 1$, its remaining service duration is one less than it was at time $t$, and it must be greater than one
(otherwise this customer would have completed their service).
This leads to the term $q_{\mu_{|z}}(\ell'|z,k')$, where
\begin{equation} \label{eq:qxiell'givenzk'}
q_{\xi}(\ell'|z,k') = 
\begin{cases}
\frac{\xi(\ell'+ 1)}{1 - \xi(1)} & \text{if } k'=\old\\
g(\ell') & \text{otherwise.}
\end{cases}
\end{equation}
For example, in vector form, if $\mu_{|z}(\ell) = (0.1, 0.2, 0.3, 0.4)$, then $q_{\mu_{|z}}(\ell' | z, \old) = (\frac{2}{9}, \frac{1}{3}, \frac{4}{9}, 0)$.  

\paragraph{Reward functions.}  
Fix the number of customers in the system $z$ at time $t$ and price $a$.  Suppose there are $d$ customers who complete service by time $t+1$,
$y$ customers who arrive to the system between times $t$ and $t+1$, and the chosen price is $a$.  Since the available capacity to house customers is $n + b + d - z$,
the revenue received by time $t+1$ is $\min(y, n+b+d -z) \times a$, and so the revenue portion of the reward function $r_\mu^{(t)}(s, a)$ is
\begin{equation*}
\sum_{d,y} q_{\mu_{|z}}(d|z) \times\textrm{Pr}(\textrm{Poisson}(\lambda_a^{(t)})=y)
  \times \min(y, n + b + d - z) \times a.  
\end{equation*}
The cost portion of the reward function consists of the fixed, per-period waiting cost $c_W \times \max(0, z-n)$ and
a service-level penalty cost term of the form
$C \times \left( \max\{ \sum_{\hat z > n} \mu(\hat z) - \alpha, \; 0 \} \right)^k$ for constants $C$ and $k$
used to incorporate the service-level requirement.
The terminal reward function $r^{(T)}_\mu(s)$ is $-c_T\times z$, representing the terminal penalty.

\subsection{Efficient Calculation}
This nonlinear Markov chain exhibits special structure that can be exploited for efficient computation of the policy gradient. 
Here, we present only a high-level overview of two key ingredients.  See Appendix~\ref{app:effcalc} for a comprehensive and more general presentation that applies to a variety of systems that can be modeled with QPLEX, not only the dynamic pricing example.

The first ingredient facilitating efficient calculation 
stems from the composite structure of a state $s=(z,\ell)$,
the form of the kernel (\ref{eq:kernelqplex}), and
the fact that the reward function can be written in the following form with $s=(z,\ell)$:
\begin{equation}
  \label{eq:rewardsep}
  \begin{aligned}
    r^{(t)}_\mu(s,a) &= c^{(t)}_\mu + \hat r^{(t)}_{\mu_{|z}}(z,a)\\
    r^{(T)}_\mu(s) &= c^{(T)}_\mu + \hat r^{(T)}_{\mu_{|z}}(z).
    \end{aligned}
\end{equation}
This implies that, for $s=(z,\ell)$,
\begin{equation}
  \label{eq:Qtsummary}
  Q^{(t)}_{\mu,\sigma'} (s,a) =c^{(t)}_\mu + \hat Q^{(t)}_{\mu_{|z}, \sigma'}(z,a)
  \end{equation}
  with appropriately defined $\hat Q^{(t)}_{\xi, \sigma'}(z,a)$.
  In these equations, $\mu_{|z}$ should be read as some fixed pmf $\xi_0$ on $\LLL$
  where $\mu(z)=0$.
Note that the right-hand side is independent of $\ell$, that
the first term is constant in $(z,\ell,a)$, and that the second term
only depends on $\mu$ through $\mu_{|z}$.
Due to this structure, several sums in the forward and backward iterative schemes are no longer required,
and the computation of $\overline Q^{(t)}_{z,\ttheta}$
required by the algorithm is less expensive than a naive calculation.

The second ingredient facilitating efficient calculation is the fact that
the policy gradient, like the nonlinear transition probabilities in (\ref{eq:qplexchainruleform}), can be written as a sum of product terms.  
The sum-product algorithm offers a powerful tool for such calculations,
as it systematically decomposes the computation into smaller, manageable operations. By eliminating redundant calculations and reducing computational complexity, 
it enables efficient handling of sums of products, which commonly arise in probabilistic inference and other computational tasks. 
Here is a very simple concrete example to illustrate the idea.  Consider the sum of products function of $x_2, x_4$ given by
$\sum_{x_1, x_3} f_1(x_1, x_2) \times f_2(x_2, x_3) \times f_3(x_3, x_4)$ and suppose   
each variable takes $m$ possible values.  A naive calculation takes order $m^4$ time.  By rewriting the expression 
as $(\sum_{x_1} f_1(x_1, x_2)) \times (\sum_{x_3} f_2(x_2, x_3) \times f_3(x_3, x_4) )$
and separately calculating each of the two functions in parentheses first, the sought-after sum can be calculated in order $m^2$.

\section{Numerical Experiments}
\label{sec:numericalillustration}

In the dynamic pricing example one seeks a \emph{pure} policy on prices, namely, a specification of the price to be quoted for each time $t$ and number of customers in the system $z$.
Such count-based policies have been widely studied in the dynamic pricing and revenue management literature
for multiserver systems with general service duration distributions, see, for example, \cite{balseiroma, adamjiaqi, ziya2006}.

 Given an initial policy, which we always take to be uniform on prices for each $(t,z)$,
the exponentiated Q-ascent algorithm (applied to a specific problem instance) will produce a sequence of \emph{randomized} policies on prices, each of which is
a collection of distributions over prices, one for each $(t,z)$.
Throughout this section, when we speak of the QDP policy, it should be interpreted as the \emph{last} randomized policy in this sequence, converted to a pure policy
by selecting, for each $(t,z)$, the price corresponding to the mode of the distribution, with ties broken in favor of the lowest price.
Throughout these experiments, we set the stopping tolerance to $\epsilon=10^{-6}$,
unless otherwise stated for runs that stop after a fixed number of training episodes.
All training times we report are for an Apple M4 chip.

\subsection{Algorithm Performance}
\label{sec:exampleA}

To assess the performance of our algorithm, we study two cases for which the optimal solutions are available:
(i) small-scale instances where the full-information MDP can be solved to optimality via Bellman equations; and
(ii) instances with geometric service duration distributions, which can be solved to optimality via Bellman equations as the state no longer needs to incorporate residual service durations.

\subsubsection{Experimental Design Elements}

Each experiment starts with an empty system. The time horizon is $T = 50$.
The set of prices is fixed to $\{ 0.1, 0.2, \ldots, 1.0, 1.1 \}$.  (No customers arrive if the price is set to 1.1.)
We next describe design elements common to all our experiments. 

\paragraph{Service duration distributions.}
We consider four discrete \emph{uniform} service-time distributions with different supports:
\begin{itemize}
\item The \emph{uniform} distribution `Uni', uniform on $\{1,2,\ldots,20\}$,
\item The \emph{medium high uniform} distribution `UniM', uniform on $\{11,\ldots,20\}$,
  \item The \emph{high} uniform distribution `UniH', uniform on $\{16,\ldots,20\}$, and
  \item The \emph{barbell} distribution `BB', uniform 
    on $\{1,\ldots,5,16,\ldots,20\}$.
    \end{itemize}
    
\paragraph{Arrival rate functions.}
For each time $t$ and price $a$ chosen at time $t$ we let 
$\lambda^{(t)} (a)$ represent the corresponding Poisson rate of arriving customers.
Each experiment must specify the $\lambda^{(t)} (a)$ for each of the 50 periods and 11 price levels or, equivalently,
ten non-trivial {\em arrival rate functions} $\{ \lambda^{(t)} (a): 0 \le t < T \}$, one for each price level $a$.  

Each arrival rate function has the form 
\begin{equation}
\lambda^{(t)} (a) = \lambda_{\avg}(a) \times s^{(t)}.
\end{equation}
The {\em shape} function $\{ s^{(t)}: 0\le t < T\}$ has a time-average $\frac1T \sum_{t=0}^{T-1} s^{(t)}$ equal to 1, and so
$\lambda_{\avg}(a)$ is the average arrival rate.  
We use four possible choices for the shape function depicted in the diagram below:  
decreasing (`DEC'), increasing (`INC'), alternating (`ALT'), and constant (`CON').
(We scaled each shape function so that its maximum is 1 to more easily 
reveal its shape.) 

\begin{center}
  \includegraphics[width=.9\linewidth,trim=6 8 6 2,clip]{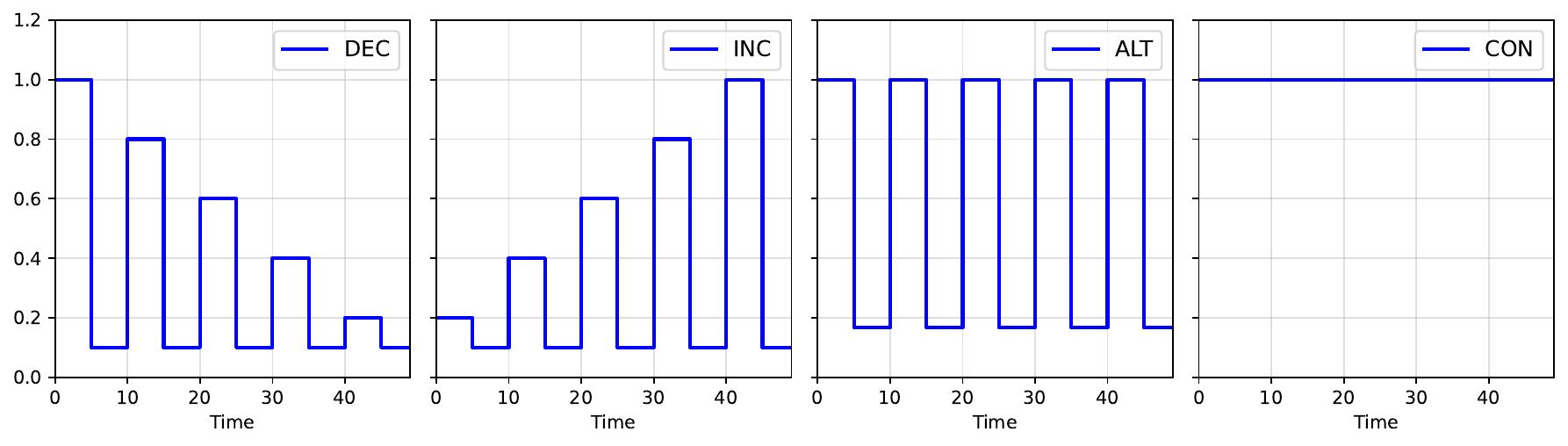}
  \end{center}


To pin down the average arrival rates, we use a linear demand model so that
\begin{equation*}
\lambda^{(t)}(a) =\lambda^{(t)}(0.1) \times (1.1-a).
\end{equation*}
In other words, a price of 1.1 corresponds to rejecting all customers
and $\lambda^{(t)}(0.1)$ is the maximum arrival rate associated with the minimum price. 
Given the expected service duration $E(S)$ and 
the number of servers $n$, we set the maximum arrival rate
in terms of the maximum average utilization $u_{\avg}(0.1)$ via
\begin{equation} \label{eq:uavg}
\lambda_{\avg}(0.1) = \frac{u_{\avg}(0.1) \times n}{E(S)},
\end{equation}
consistent with Little's Law.  We set $u_{\avg}(0.1) = 5$.  
Keep in mind that an average utilization of 5
would only be realized if the price is set to its minimum level throughout the horizon.  Note that the average utilization at price $a = 1.0$ is 0.5.

Putting it all together, the price-sensitive arrival rates are expressed in terms of
the parameters of the experimental design as follows:
\begin{equation}
  \label{eq:pricesensitivearrivals}
  \lambda^{(t)}(a) =  \frac{n \times u_{\avg}(0.1) }{E(S)} \times s^{(t)} \times (1.1-a).
\end{equation}

\subsubsection{Full-Information MDP}
\label{sec:fullinformationmdp}

A suitable state space for a Markov chain formulation includes the number of customers in the system $z$, but must also include relevant information about
the remaining service durations of the customers in service.  As it is immaterial which server serves which customer for this system,
it is sufficient (and efficient) to keep track of the numbers of customers $h(\ell)$ who have remaining service duration $\ell$ for each possible $\ell = 1, \ldots, \ell_{\max}$,
where $\ell_{\max}$ is the right end point of the service duration support,
so $\ell_{\max}=20$ in our design.
Note that $\sum_\ell h(\ell) = \min(z, n)$.  (See \cite[Ch.\ 2]{qplexbook} for details.) 
The number of states in such a formulation is $O\!\left(n^{\ell_{\max}}\right)$, the cardinality of which 
grows polynomially in $n$ with degree $\ell_{\max}$, quickly becoming prohibitively large and rendering the full-information MDP formulation computationally intractable.
For small instances, the optimal policy can be found via Bellman's optimality equations.
It is also possible to evaluate arbitrary policies (including count-based policies) via Bellman's equations.

We work with the following experimental design, which yields a total of 192 experiments.  

\begin{center}
\begin{tabular}{c|c}
Design Parameter & Value\\
  \hline
number of servers $n$ &3, 5\\
buffer size $b$ &3\\
shape function $s^{(t)}$ &DEC, INC, ALT, CON\\
maximum average utilization $u_{\avg}(0.1)$ & 5\\
service duration pmf $g$& Uni, UniM, UniH, BB\\
per-period holding cost $c_W$ &0.05, 0.1\\
terminal cost $c_T$& 0.5, 1.0, 1.5\\
\end{tabular}
\end{center}

\paragraph{Results.}

We compare the value estimates produced by several policies and evaluation methods. We denote each value by $v$, using a subscript to indicate the policy and a superscript to indicate the evaluation method.
Throughout these experiments, we set the learning rate to $\eta=1.0$.

\subparagraph{Policy evaluation.}
Given the QDP (pure) policy generated by our algorithm, QPLEX can be used to approximate its value.
(Given the policy parameter $\ttheta$, the forward iterative scheme yields the $\mu^{(t)}_{\ttheta}$, from which the sum of the per-period expected rewards are easily calculated.)
We shall call this the {\em QPLEX value} and denoted it by $v^{\qplex}_{\qdp}$.   
Not surprisingly, the exact value $v_{\qdp}^{\exact}$ of this QDP policy, which
can be obtained by using the Bellman equations, will not always equal the QPLEX value.
So how close are these two values across all experiments?
{\em The QPLEX and exact values of the QDP policies are practically identical.}
The histogram below shows the relative errors $(v_{\qdp}^{\exact}-v_{\qdp}^{\qplex})/v_{\qdp}^{\exact}$ across the experiments.
The maximum absolute relative error across all experiments is 0.65\%, with an average absolute relative error of just 0.12\%.
The key takeaway is that QPLEX value of a QDP policy is practically the same as the exact value.  

\begin{center}
\includegraphics[width=8cm]{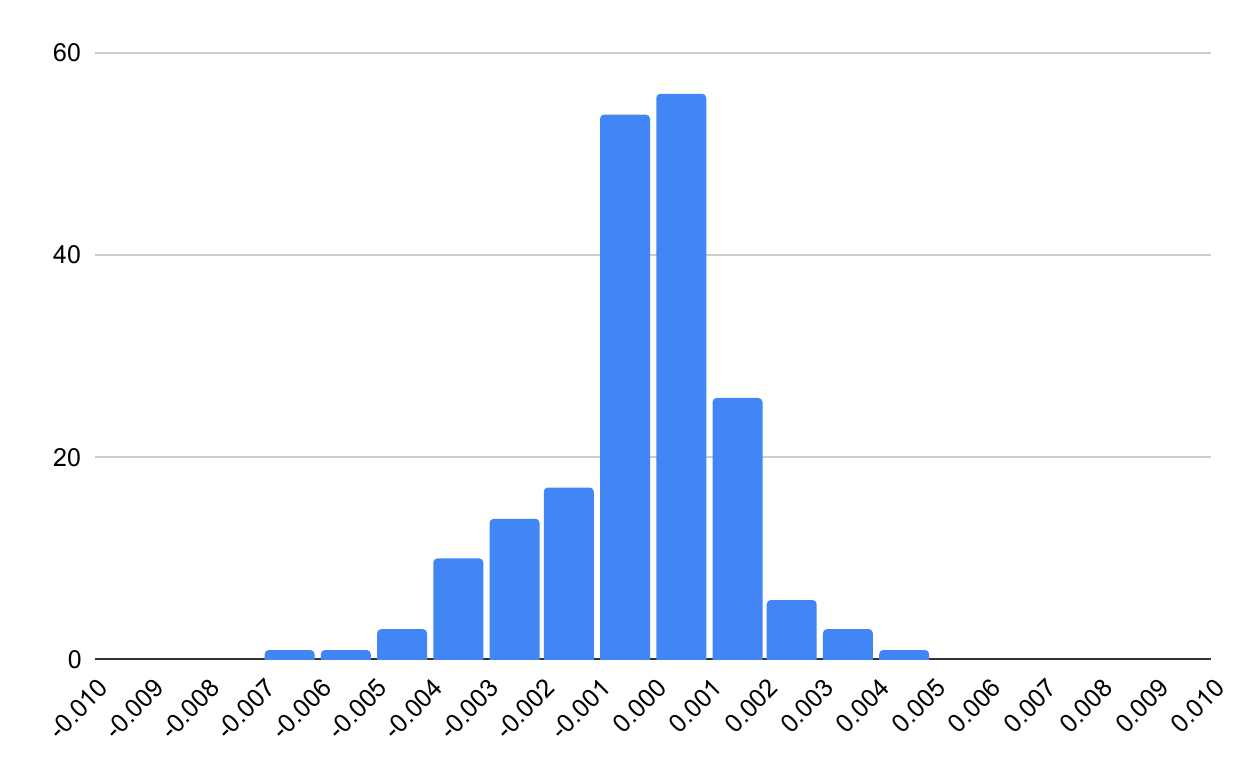}
\end{center}

\subparagraph{Relative optimality gap upper bound.}
We let $v_{\mdp}^{\exact}$ denote the value associated with the optimal full-information policy.
This is an upper bound for the relative optimality gap of the policy found by our algorithm, which aims to identify the optimal count-based policy.
The figure below shows the histogram of relative optimality gap upper
bounds 
$(v_{\mdp}^{\exact}-v_{\qdp}^{\exact})/v_{\mdp}^{\exact}$.
The {\em maximum} relative optimality gap upper bound is 3.6\% and for 89\% of the experiments it is less than 2.5\%.

\begin{center}
\includegraphics[width=8cm]{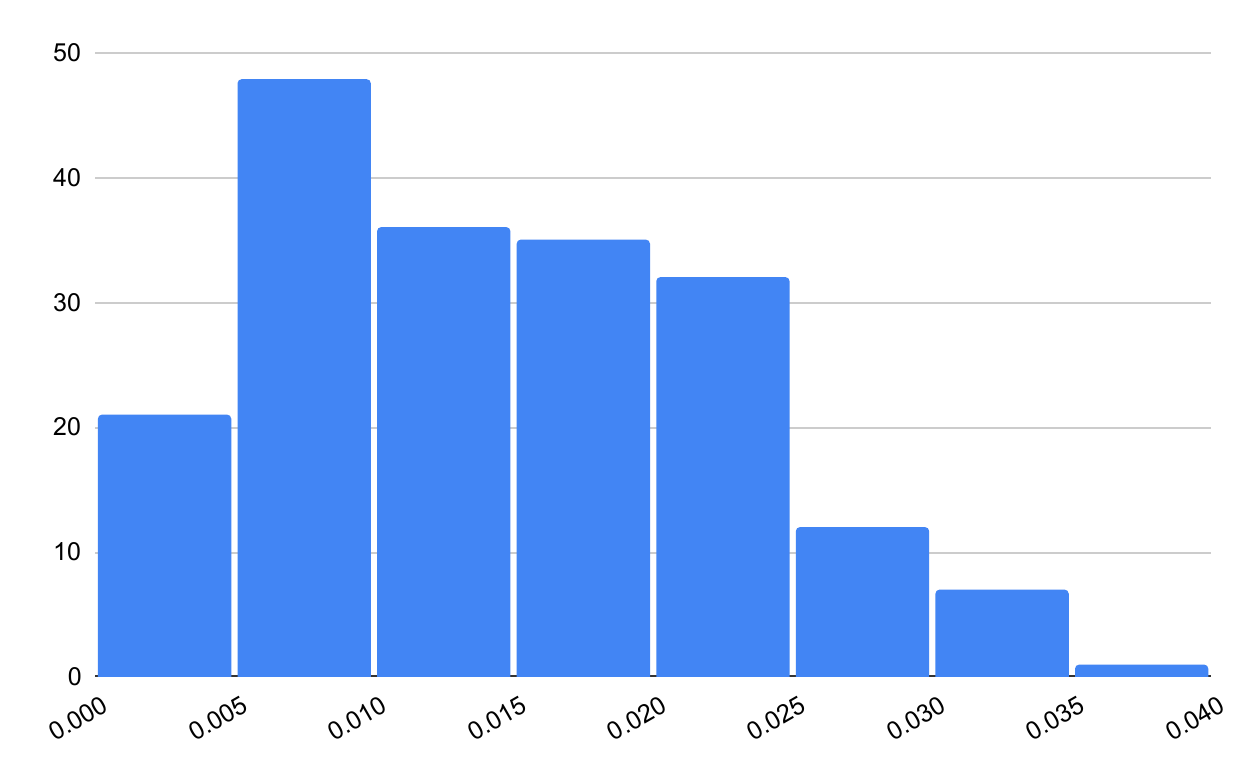}
\end{center}

\subparagraph{Proxy for relative optimality gap.}
The next question we address is how much of the relative optimality gap shown in the previous figure is due to the fact that 
the optimal full-information policy sets prices for every $(t, z, h)$
whereas the QDP policy sets prices only for every $(t, z)$.
In theory this question is answered by comparing the QDP policy with the optimal count-based policy.  
Even for the small problem instances considered here, it is computationally prohibitive to calculate the optimal count-based policy.

As a proxy for the optimal count-based policy, we extract a count-based policy from the optimal full-information policy
and compare its value to the value of the QDP policy.
Let $\mu^{(t)}_{\mdp}(z, h)$ denote the full-information state distribution at time $t$ associated with the optimal full-information policy.
Set $Q_{\mdp}^{(T)}(z, h, a) = 0$ for all $z$, $h$, and $a$, and set $V_{\mdp}^{(T)}(z) = 0$ for all $z$.  
We obtain the extracted count-based policy by working backwards in time using the following two equations: 
\begin{align}
Q^{(t)}_{\mdp}(z, h, a) &= r^{(t)}(z, h, a)  + \sum_{z'} p^{(t)}(z' | z, h) \times V^{(t+1)}_{\mdp}(z') \\
V^{(t)}_{\mdp}(z) &= \max_a \sum_h \mu^{(t)}_{\mdp}(h | z) \times Q^{(t)}_{\mdp}(z, h, a).
\end{align}
It  is understood that $V^{(t)}_{\mdp}(z) = 0$ and $Q^{(t)}_{\mdp}(z,h,a)=0$ if $\mu^{(t)}_{\mdp}(z) = 0$.  
The first equation is used to obtain the $Q$ values for $t = T-1$, and then the second equation determines both the $V$ values and optimal count-based policy at time $T-1$.
Now repeat the procedure for $t = T-2$, etc. 
It should be noted that the resulting policy is not, in general, optimal within the class of count-based policies. 

The figure below shows the histogram of $(v_{\extract}^{\exact}-v_{\qdp}^{\exact})/v_{\extract}^{\exact}$ values. 
The smallest value is $-0.05\%$, the largest value is 0.17\%, and the average is 0.017\%.  {\em The histogram provides strong evidence
that the value of the count-based policy found by QDP is likely to be extremely close to the value of the optimal count-based policy.}
The negative values shows that, despite requiring substantial computational effort to construct, 
the extracted policy is not necessarily superior to the QDP policy either.

\begin{center}
\includegraphics[width=8cm]{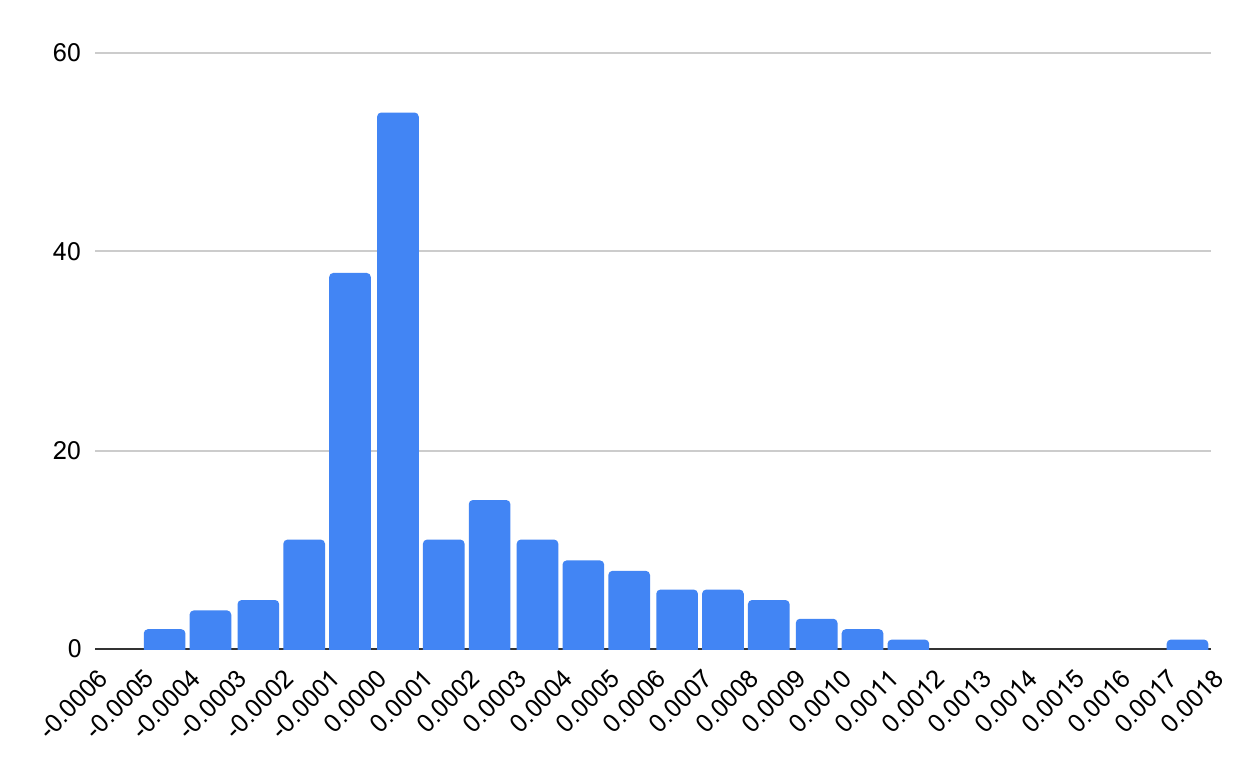}
\end{center}

\subsubsection{Geometric Service Durations}

With memoryless service durations, problem instances can be solved to optimality using Bellman equations where a state is simply the number of customers $z$.
With a much smaller state space, we can find optimal policies for systems with much larger values for the number of servers $n$.
The design parameters that are changed are shown in the table below.  Note that the means of the service times match the means in our full-information design.
We use a tolerance of $10^{-6}$ to cut off the right tail and ensure via a root finding procedure that we match the mean of the original distribution.)
Since there are now three instead of four possible service duration distributions, 
there are $(3/4)\times 192 = 144$ experiments in the design.

\begin{center}
\begin{tabular}{c|c}
Design Parameter & Value\\
  \hline
number of servers $n$ &20, 40\\
buffer size $b$ &5\\
service duration pmf $g$& truncated geometric with means 10.5, 15.5, 18.0
\end{tabular}
\end{center}

We make a comparison with QDP using the truncated geometric service
distributions, without explicitly leveraging the memoryless service durations.
(A dedicated policy gradient implementation with the reduced state space
would run faster but would result in the same accuracy, modulo any
  truncation effects.)
We again use a learning rate of $\eta=1.0$.
The \emph{exact} value of the QDP policy matches the value of the optimal policy
in 94 of the 144 problem instances.
The largest relative optimality gap is $2.86\times 10^{-7}$, so
the QDP policy is, to all intents and purposes, optimal.
Keep in mind we work with a truncated geometric distribution, so this discrepancy could simply be due to the truncation.

\subsubsection{Training}
We next discuss the training process. 
For this purpose, we select the problem instance that has the highest relative optimality gap upper bound
in the experimental design for the full-information MDP.  The parameters for that problem instance are: the number of servers $n$ is 3, 
the service duration distribution $g$ is UniH, the shape function $s^{(t)}$ is INC, the per-period holding cost $c_W$ is $0.05$, and the terminal cost $c_T$  is 1.5.

\paragraph{Accuracy of values during training.}
Recall that we showed that the QPLEX value of a QDP policy is nearly identical to its exact value. 
This QDP policy is the last randomized policy generated by the algorithm converted to a pure policy.  We start by assessing whether 
the QPLEX values of the randomized policies generated throughout training closely track their exact values as well.

The figure below shows both the QPLEX and the exact values
of the randomized policies encountered during training, recorded every 50 training episodes. We can see that indeed the values are virtually identical
throughout training.  We remark that since QDP is a deterministic solver, the resulting (interpolated) curve
does not exhibit the zigzag behavior typically observed in conventional reinforcement learning.

\begin{center}
  \includegraphics[height=5.2cm]{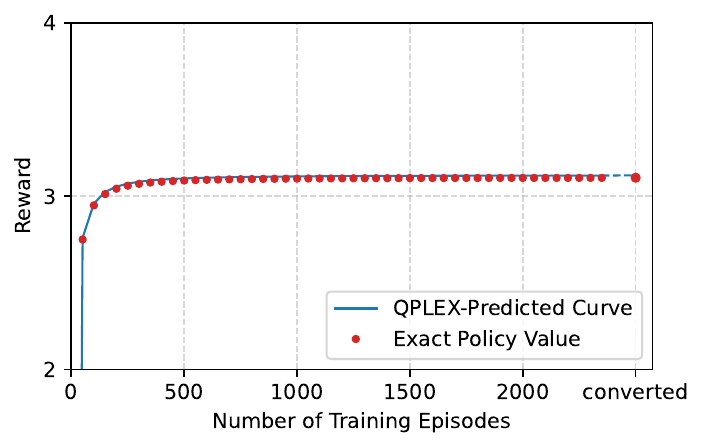}
  \end{center}

  \paragraph{Accuracy of ascent directions during training.}
  The QPLEX values of the randomized policies
generated throughout training use only the forward iterative scheme.
We next turn to the accuracy of the approximations
generated by the backward iterative scheme.
Specifically, since the functions
\begin{equation}
  \overline Q^{(t)}_{z, \ttheta}(a) -\frac1{|\AAA|} \sum_{\tilde a}   \overline Q^{(t)}_{z, \ttheta}(\tilde a)
\end{equation}
together represent
the direction of ascent (in logit coordinates) at the policy parameter vector $\ttheta$,
we compare these functions throughout training with analogs derived from
the full-information Markov chain.

Consider a fixed incumbent (count-based) policy parameter vector $\ttheta$. 
Let $Q^{(t)}_{\ttheta,\mdp}(z, h, a)$ denote the state-action value function associated with the MDP on the full-information state space induced by the policy,
and let $\mu^{(t)}_{\ttheta,\mdp}(z,h)$ denote the corresponding state distributions.
(See Equations~(\ref{eq:recMarkovVQ}) and (\ref{eq:forwardextendedstate}) with $s=(z,h)$.)
Write $Q^{(t)}_{\ttheta,\qdp}(z, a)$ instead of $\overline Q^{(t)}_{z, \ttheta}(a)$.
To compare these two functions, we extract a new Q-function from the full-information Q-function via
\begin{align*}
    Q^{(t)}_{\ttheta,\extract}(z,a) =\sum_{h} \mu^{(t)}_{\ttheta,\mdp}(h|z)\times Q^{(t)}_{\ttheta,\mdp}(z,h,a),
\end{align*}
where once again it is understood that the right-hand side is zero if $\mu^{(t)}_{\ttheta,\mdp}(z) = 0$. (In the dynamic pricing example, the `supports'   of $ Q^{(t)}_{\ttheta,\extract}$  and     $Q^{(t)}_{\ttheta,\qdp}$ match.)
We compare $Q^{(t)}_{\ttheta,\extract}(z,a) -\frac1{|\AAA|} \sum_{\tilde a}  Q^{(t)}_{\ttheta,\extract}(z,\tilde a)  $ with $Q^{(t)}_{\ttheta,\qdp}(z,a) -\frac1{|\AAA|} \sum_{\tilde a} Q^{(t)}_{\ttheta,\qdp}(z,\tilde a) $, viewed as vectors indexed by $(t,z,a)$, using two metrics:
the cosine of the angle between the two vectors and the ratio of their Euclidean lengths. 
Both metrics over the course of training are shown in the figure below.
The vectors exhibit near-perfect alignment: 
the cosine values correspond to angles of less than $2^\circ$ on a $90^\circ$ scale. 
Apart from an initial 0.5\% norm ratio, all later ratios stay below 0.2\%.

\begin{center}
    \includegraphics[height=5.2cm]{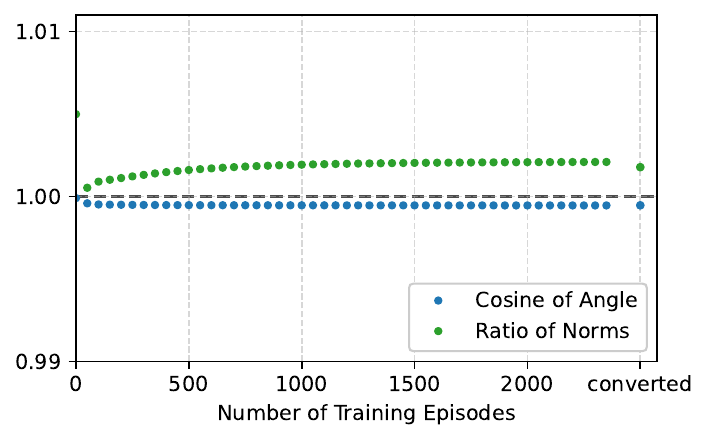}
  \end{center}

\paragraph{Setting learning rates.}
Setting the learning rate requires trading off quality and time. 
For problems without service level constraints such as the ones we discussed
so far, in our experience, properly chosen learning rates require only 5-50 training episodes to achieve a near-optimal solution.
The results are shown in the table below. 
  
\begin{center}
\begin{tabular}{l|rrrrrrr}
Learning rate $\eta$ & $1$ & $10$ & $100$ & $1{,}000$ & $10{,}000$ & $100{,}000$ & $1{,}000{,}000$ \\
\hline
QPLEX value of QDP policy & 3.1200 & 3.1200 & 3.1200 & 3.1200 & 3.1193 & 3.0973 & 0.5340 \\
Number of episodes & 2372 & 469 & 111 & 66 & 26 & 4 & 1 \\
Training time (s) & 12.333 & 2.346 & 0.536 & 0.299 & 0.113 & 0.020 & 0.007
\end{tabular}
\end{center}

\subsection{Comparison with Q-Learning}
\label{sec:exampleB}

In this subsection, we emphasize the computational efficiency of QDP on large-scale problems and compare it with a Q-learning algorithm. This efficiency stems from the QPLEX infrastructure, which is fully deterministic and bypasses the curse of dimensionality. 
For this purpose we modify the example of Section~\ref{sec:exampleA} as follows.
The number of servers $n$ is set to $20$ and the buffer size $b$ is set to $5$.
We use the DEC arrival rate shape, the UniM service duration distribution,
and we use $c_W=0.1$ and $c_T=1.5$.

Under the MDP formulation, the resulting full-information state space 
for this example contains approximately $4.82\times 10^{11}$ states,
and therefore exact dynamic programming is computationally intractable.
To provide a baseline for comparison with QDP, we instead consider reinforcement learning methods 
that are commonly used to overcome the curse of dimensionality. The full-information state space is far too large to implement Q-learning in a purely tabular way, so we use state aggregation. 
Specifically, we merge all underlying full-information time-augmented states of the form $(t,z,h)$ into the single state index $(t,z)$ in the Q-table, 
even though this induces so-called \emph{state aliasing} because those states differ in their transition dynamics. 
More precisely, for each state $(z,h)$ at time $t$, the procedure consists of four steps.
In Step 1, we sample an action $a$ using the $\epsilon$-greedy policy based on the incumbent Q-function. (Based on empirical results, we set $\epsilon = 1$, corresponding to pure exploration, as this choice produced the best performance.)
In Step 2, we sample a transition to $(z',h')$.  
In Step 3, we update the Q-function via the temporal-difference rule 
\begin{equation}
  q^{(t)}(z,a) \leftarrow q^{(t)}(z,a) + \alpha\times \left(r^{(t)}(z,a) + \max_{a'} q^{(t+1)}(z',a') - q^{(t)}(z,a)\right),
\end{equation}
where $q^{(t)}(z,a)$ denotes the estimate of the aggregate state-action value associated with state $z$ and action $a$ at time $t$.
In Step 4, we increment the time index $t$ and repeat along the sampled trajectory until the horizon is reached.

We run our state-aggregate Q-learning algorithm with six constant learning rates (0.1, 0.05, 0.025, 0.01, 0.005, and 0.0025) and train for 100 million episodes. 
During training, we evaluate the incumbent policy via one million Monte Carlo simulations every million episodes.
The figure below shows the learning curves, highlighting the curve for the best learning rate in retrospect.
Also shown is the value of the QDP policy, executed with learning rate $\eta=1000.0$ and our usual stopping tolerance $\epsilon=10^{-6}$.

\begin{center}
        \includegraphics[
            height=0.2\textheight,
        ]{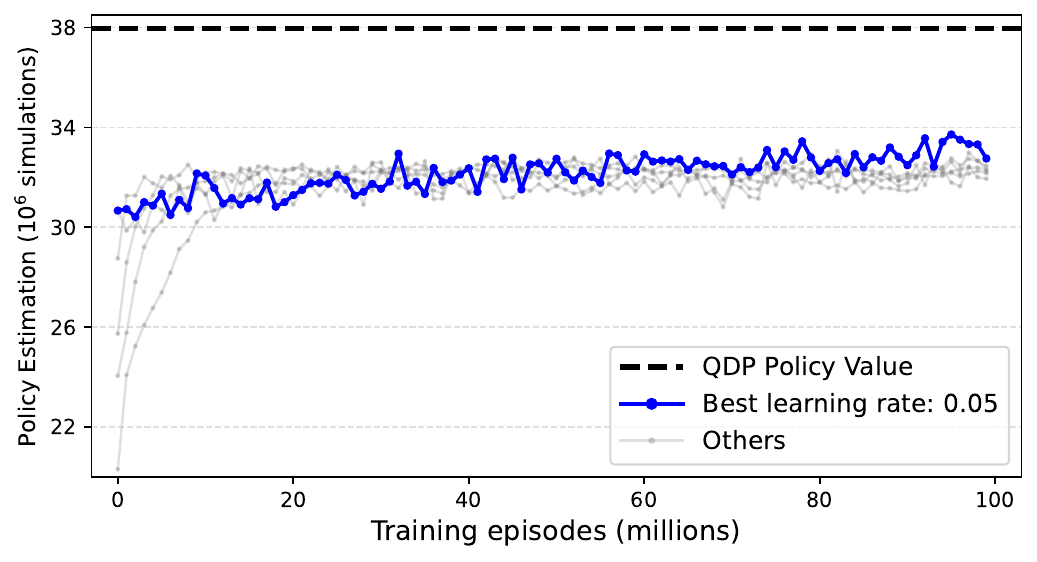}
      \end{center}

      The best trained value obtained by state-aggregated Q-learning is 13.6\% lower than the value of the QDP policy. 
      The learning curve does not seem to converge to the optimal value, but it plateaus at the value of a suboptimal aggregated policy and the policies are markedly inferior across all learning rates. 
      The QPLEX value of the QDP policy is confirmed via simulation, see below. 
      Even though training QDP is more expensive on a per-episode basis, the overall training requirement is dramatically reduced.
      
\begin{center}
\begin{tabular}{lccc}
\toprule
 Policy & Sim. Value (99.7\% CI) & Training Episodes \\
\midrule
Best Q-learning & $32.7489 \pm 0.0034$ & $100$ million \\
QDP  & $37.9646 \pm 0.0025$ & $28$\\
\bottomrule
\end{tabular}
\end{center}

Comparing the policies below,
we see that the QDP policy is monotone even without any structural input.
A monotone policy can be quite practical.
(We do encounter non-monotone
  policies in the presence of chance constraints, see Sections~\ref{sec:exampleC} and \ref{sec:exampleD}.)
The state-aggregate Q-learning policy fails to capture this monotonicity
and exhibits more erratic behavior.
We also note that the policy choice when $z = 25$ for the first 10 periods is `N/A' (not applicable).
Here is why.  In the QPLEX model described in Section~\ref{sec:dynamicpricingexample},
new arrivals can begin service if a server becomes available.  Consequently, 
the price chosen when $z = n+b = 25$ is relevant as long as there is a {\em positive} probability of a service completion during the period.  
Since the service duration distribution is UniM in this example, arrivals in the first period cannot finish their service before $t = 11$.

\begin{center}
      \includegraphics[height=0.39\textheight]{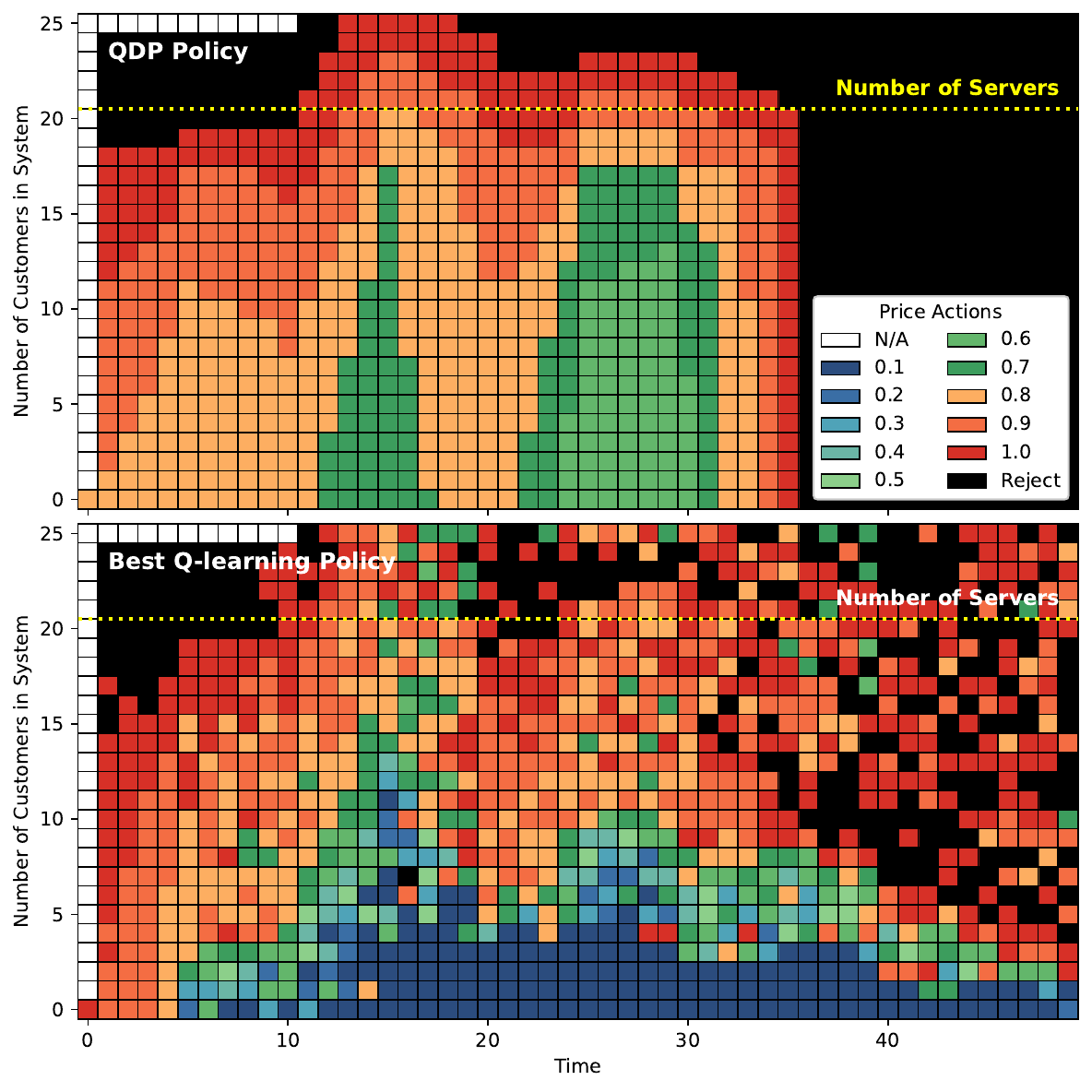}
\end{center}

The QDP policy
captures the nonstationarity and the interaction between the arrival rate functions and the service distribution, which is uniform on $\{11,\ldots,20\}$. More specifically:  

\begin{enumerate}
\item The QDP policy uses the highest prices (0.8, 0.9, and 1.0) for $t=0,\ldots,11$ (the first two arrival plateaus), 
reserving capacity for anticipated high-value demand even at the cost of initially incurring more idleness.
  
\item It rejects arrivals when $t\le 4$ and $z=19$, even with one idle server, because even the highest price can bring multiple customers at once and force buffering; with the chosen service duration distribution, near-term completions are impossible, so this buffering risk incurs the substantial waiting cost $c_W=0.1$ per period. This phenomenon disappears later as completion probabilities increase.
  
\item Between plateaus, the QDP policy lowers prices anticipatively (e.g., starting at $t=13$--$16$ ahead of the true valley $t=15$--$19$), balancing reserving capacity for future high-value arrivals against admitting customers to start service earlier.
  
\item The lowest-price region occurs around $t=27$ rather than $t=15$, reflecting recognition of the arrival rate function shape.

\item From $t=36$ onward, the QDP policy correctly rejects all customers to avoid terminal cost risk. 
  This is indeed optimal and here is an explanation.
  A customer who arrives in period $36$ and 
  can immediately begin their service will complete it by time $T = 50$ (and thus avoid the terminal penalty) only if their service duration is less than 14, which occurs with probability 0.3.  
Since the expected reward for such an arrival is at most $1 -  (1 - 0.3) \times (1.5) = - 0.05 < 0$, it does not pay to accept any customers at time $t = 36$, and clearly any time $t > 36$.
\emph{The Q-learning policy does not prescribe this behavior}.

\end{enumerate}

\subsection{Incorporating Service Level Chance Constraints}

We replace the waiting and terminal cost structure with a more challenging per-period service level chance constraint
\begin{align}\label{eq:servicelevelcons}
    \mathbb{P}(\text{the buffer is nonempty at time } t)\le \alpha,
\end{align}
for all $t=1,\ldots,T$. Such constraints substantially increase the problem difficulty for conventional approaches, and none of the exact solution approaches from Section~\ref{sec:exampleA} apply even in very special settings such as small problem instances.
Our algorithm, on the other hand, can readily accommodate such chance constraints.
Recall that we optimize a penalized objective that trades off performance and constraint violation, see Section~\ref{sec:nonlinearMC_DPexample}.

\subsubsection{The QDP Policy}
\label{sec:exampleSLCconstraints}

We consider the example in Section~\ref{sec:exampleB} with the following modifications. 
First, we replace the costs with a per-period chance constraint by setting $c_W=c_T=0$ and using $\alpha= 0.05$, thus limiting the probability of a nonempty buffer. 
We apply our algorithm with learning rate $\eta=1.0$, constraint-violation penalty parameter $C=100.0$, and penalty exponent $k=1.0$.
For this example the iterate values exhibit persistent oscillations with this constant step size, and so we terminate after 5,000 episodes.

The maximum QPLEX-predicted value of 48.7499 is achieved at episode 3{,}308,
and the QPLEX-predicted value reaches 99\% of this maximum at episode 407 and all subsequent episodes.  
It takes 67 seconds to complete 500 episodes and 10 minutes to complete all 5{,}000 episodes.
The (pure) QDP policy after 5{,}000 episodes has a QPLEX-predicted \emph{revenue} of 48.7092
  and simulation with 10 million replications estimates the revenue 
  to be $48.6418\pm 0.0029$ with $99.7\%$ confidence.
The figure below shows the QPLEX-predicted probabilities of a nonempty buffer at each time for the QDP policy 
as well as the estimated probabilities using simulation.
This QDP policy effectively meets the service level constraints for all practical purposes:
the maximum estimated probability is 0.0523 in the simulation,
and quite a few of the constraints are (empirically) nearly binding.
  
\begin{center}
\includegraphics[width=8cm]{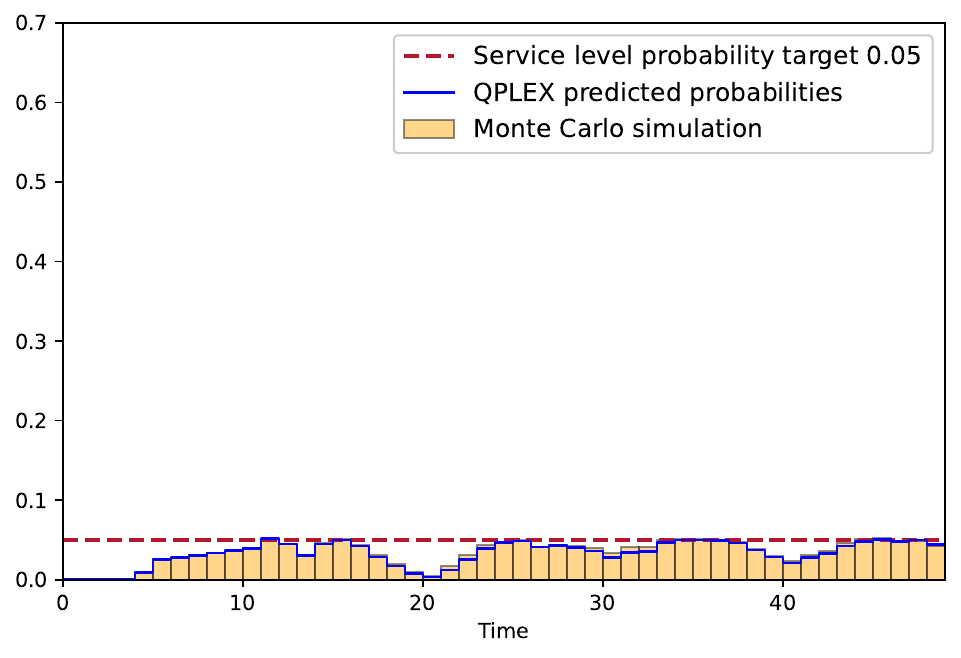}
\end{center}

The policy found by our algorithm is shown below.
We note that the policy is \emph{not} necessarily monotone
during the intervals where the service-level constraints are
nearly binding. 
Prices are highest in the periods immediately preceding such intervals.

\begin{center}
    \includegraphics[width=8.3cm]{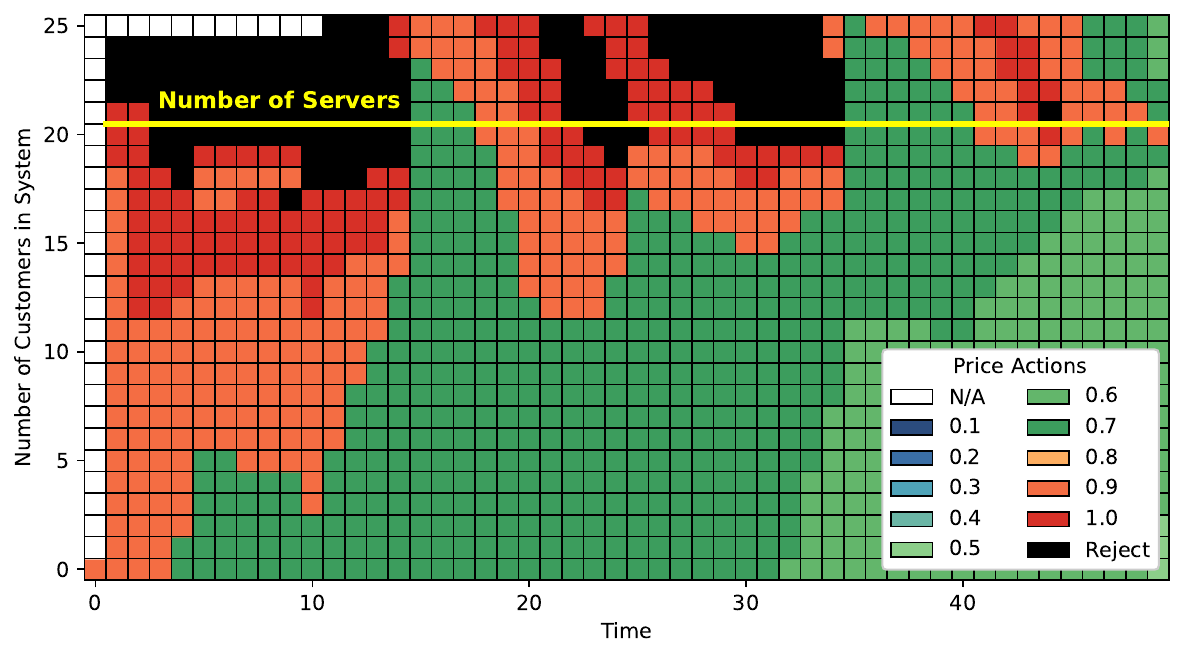}
\end{center} 

We remark that our algorithm extends to a broad class of chance constraints and distribution-dependent objectives; the service-level constraint in (\ref{eq:servicelevelcons}) is only a special case.

\subsubsection{A Comparison with Exhaustive Simulation}
\label{sec:exampleC}

Let's consider how one may use discrete event simulation (DES) to solve this problem.
For a given policy, DES can simultaneously estimate the objective and the probability of feasibility. 
For this problem instance, the number of replications required to ensure a reasonable confidence that the service level constraints are being met is not an insignificant challenge.
The real challenge, however, is that the total number of dynamic policies for this problem instance is approximately $(11^{25})^{50}$.
Consequently, to use DES one either needs to restrict the policy class to reduce the number of policies considered or use some search algorithm.
 
We explore the option of restricting the policy class with exhaustive simulation. One possibility is to impose {\em time-homogeneity} in the policy space, which amounts in this setting to choosing 26 prices, one price for each $z$.
Unfortunately, the total number of policies under this restriction is approximately $11^{26}$, which is still far too many for DES to handle.
One practical approach to further narrowing down the set of admissible policies is
to restrict the policy class so that a policy assigns a shared price for $z$ in (say) 
the sets $\{0,\ldots,10\}$, $\{11,\ldots,15\}$, $\{16,\ldots,20\}$, and $\{21,\ldots,25\}$,
in effect using four experts,
and to limit the available prices to 0.2, 0.4, 0.6, 0.8, 1.0, and 1.1 (reject).
Under these restrictions, any admissible policy can be encoded as a four-dimensional vector
$(a_{\textsc{low}},\,a_{\textsc{mid}},\,a_{\textsc{high}},\,a_{\textsc{buf}})$, 
where each component is the posted price for a customer-count block. 
With six possible prices and four customer-count blocks,
there are $6^4=1296$ candidate policies, which is \emph{now} computationally feasible. 
We evaluate each candidate policy using one million Monte Carlo replications, determine the chance-constraint feasibility via the empirical distribution, 
and rank all feasible policies by simulated average reward. The top six policies appear in the table below.  
\begin{center}
\begin{tabular}{clc}
\toprule
rank & $(a_{\textsc{low}},a_{\textsc{mid}},a_{\textsc{high}},a_{\textsc{buf}})$ & mean $\pm 3\,\text{std}$ \\
\midrule
1 & $(0.8,\,1.0,\,1.1,\,0.8)$ & $37.6973 \pm 0.0078$ \\
2 & $(0.8,\,1.0,\, 1.1,\,0.6)$ & $37.6968 \pm 0.0078$ \\
3 & $(0.8,\,1.0,\, 1.1,\,0.4)$ & $37.6827 \pm 0.0078$ \\
4 & $(0.8,\,1.0,\, 1.1,\,1.0)$ & $37.6518 \pm 0.0078$ \\
5 & $(0.8,\,1.0,\, 1.1,\,0.2)$ & $37.6242 \pm 0.0078$ \\
6 & $(0.8,\,1.0,\, 1.1,\, 1.1)$ & $37.5271 \pm 0.0081$ \\
\bottomrule
\end{tabular}
\end{center}

In practice, one million replications per policy suffice to assess feasibility: within this restricted class, the chance constraint is rarely binding. Most policies either violate the $0.05$ service-level requirement by a significant margin or satisfy it with substantial slack. For the top six policies, we then run 100 million simulations to validate the ranking, which provides overwhelming empirical evidence that $(0.8,\,1.0,\,1.1,\,0.8)$ is the optimal feasible policy within the restricted class, with estimated revenue of $37.6966 \pm 0.00079$ with 99.7\% confidence. 
Note that there is a significant drop in revenue (23\%) as compared to the QDP policy. 

Our algorithm can readily accommodate the restricted admissible policy class through a slight modification of the architecture, by ``sharing parameters'' across different $(t,z)$.
Here is a general setup.
Let $\bar z$ represent a set of counters and let $\bar t$ represent a set of time epochs.
Assume we are given a collection of pairs $\{ (\bar z, \bar t) \}$ such that
the $\bar z$ partition the set of counters and the $\bar t$ partition the set of time epochs. 
In our example, the collection of pairs consists of the four elements  $\{ ((0, \ldots, 10), (0, \ldots, 49)), \ldots, ((21, \ldots, 25), (0, \ldots, 49)) \}$.
We now need to update one pmf for each pair $(\bar z, \bar t)$, and we modify the updating step (\ref{eq:expupdatepi}) as follows:
  \begin{equation}
    \theta^{(\bar t)}_{\bar z}(a)  \leftarrow \frac{\theta^{(\bar t)}_{\bar z}(a)\times \exp\left(\eta\times  \sum_{t\in \bar t} \sum_{z\in \bar z}\in \overline Q^{(t)}_{z, \ttheta}(a) \right)}
    {\sum_{\tilde a} \theta^{(\bar t)}_{\bar z}(\tilde a)\times\exp\left(\eta\times  \sum_{t\in \bar t} \sum_{z\in \bar z}\in \overline Q^{(t)}_{z, \ttheta}(\tilde a) \right)}.
\end{equation}
Using this modification, our algorithm can readily accommodate other types of policy class restrictions, 
such as non-dynamic pricing (one price for each $z$, independent of $t$) and time-varying but offline (non-adaptive) pricing (one price for each $t$, independent of $z$).

Adopting this modification of the architecture, our algorithm produces the very same (presumably optimal) policy using only $100$ training episodes (with learning rate $\eta=0.001$ 
but otherwise the same hyperparameters as before).

\subsubsection{Comparison with Memoryless-Service Approximation}
\label{sec:exampleD}

One often avoids working with general service duration distributions and instead works with memoryless distributions to substantially reduce the size of the state space. 
In the presence of chance constraints, however, even in that case,
such problems cannot be solved by standard methods because
accurate (approximate) tail probabilities are needed.
The nonlinear Markov chain framework from this paper
applies in this small-state space setting, using standard Markov
transition probabilities but with 
nonlinearities appearing in the immediate rewards.
However, QPLEX is not needed.

This subsection considers the same example as in Section~\ref{sec:exampleSLCconstraints}, but without any restriction on the policy class and with the full action set restored. We compare the QDP policy with the policy found by working with geometric
service duration distributions (with the same mean) and the
reduced state space.
As we demonstrate next, the memoryless-service
approximation can be highly misleading and may lead to policies that exhibit
severe violations of the service-level constraints.

We use our algorithm to solve this problem using a truncated geometric service duration distribution, which, modulo truncation, is equivalent to working with the small-state setting.
(We again use a learning rate of 1.0 for 5{,}000 steps and a tolerance of $10^{-6}$ to cut off the right tail and ensure via a root finding procedure that we match the mean of the original distribution.) 
Evaluating this `geometric-approximation' policy using QPLEX yields a QPLEX-predicted revenue of 54.8306 if we use the (incorrect) truncated geometric service duration distribution
and a QPLEX-predicted revenue of 48.8075 if we use the (correct) UniM service duration distribution.
Thus, there is a significant gap in predicted revenue when using the correct service distribution. 
Note that the geometric-approximation policy results in a slightly higher revenue than the QDP policy's QPLEX-predicted revenue of 48.7092.

The figure below shows for the geometric-approximation policy the probabilities of a nonempty buffer at each time under the memoryless assumption,
as well as the estimated probabilities using simulation with the correct service distribution. We can see that the probabilities under the memoryless assumption suggest that the
service level constraints are satisfied, but the true probabilities tell a markedly different story:  many constraints are severely violated.
(This is the reason for the higher revenue obtained with the policy that calculated using the truncated geometric service duration distribution.)
The conclusion for this example is that 
using the memoryless approximation in lieu of the true service duration distribution as a means to reduce run times can lead to
policies that will turn out to be \emph{highly} infeasible, and suggests that using this approximation to solve some problem instances 
may not be reliable.
Even in the absence of chance constraints, exponential-service approximations can be unreliable for problems similar to the example in Section~\ref{sec:exampleB}, as they may lead to substantially
inferior policies compared with those obtained by our algorithm.

\begin{center}
\includegraphics[width=8cm]{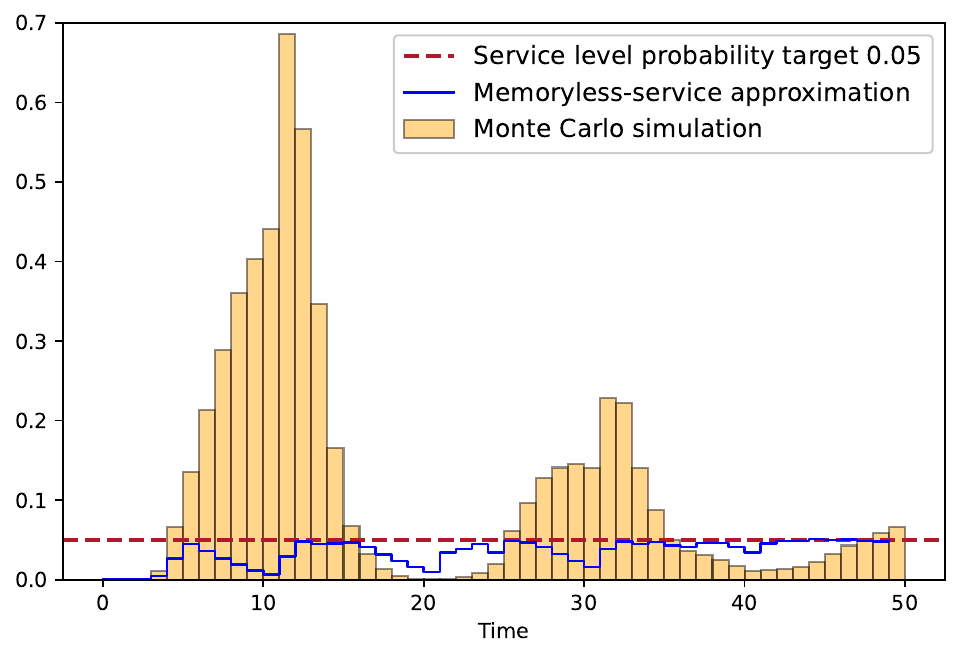}
\end{center}



  \appendix

\section{Proofs}
\subsection{Proof of Theorem~\ref{thm:pgt}}

Using the definition
\begin{equation*}
J^{(t)}_{\mu, \ttheta} = \sum_s \mu(s) \times V^{(t)}_{\mu, \ttheta}(s),
\end{equation*}
and (\ref{eq:Ptmuthetadef}), we observe that (\ref{eq:nlmcVrec}) implies the recursion
\begin{equation}
  \label{eq:nlmcJrec}
\begin{aligned}
J^{(T)}_{\mu,\ttheta}  &= \sum_s\mu(s)\times r^{(T)}_{\mu} (s) \\
  J^{(t)}_{\mu, \ttheta} &=
                           \sum_{s, a} \mu(s) \times \pi^{(t)}_{\theta^{(t)}} (a | s) \times r^{(t)}_{\mu} (s, a)   + J^{(t+1)}_{\PPP^{(t)}_{\theta^{(t)}}(\mu), \ttheta}.
  \end{aligned}
\end{equation}
The objective function $J_{\ttheta}$ in (\ref{eq:Jtheta}) can then be expressed as
\begin{equation*}
J_{\ttheta} = \sum_{\tau=0}^{t-1} \left \{ \sum_{s, a} \mu^{(\tau)}_{\ttheta}(s) \times \pi^{(\tau)}_{\theta^{(\tau)}} (a | s) \times r_{\mu^{(\tau)}_{\ttheta}}^{(\tau)} (s, a) \right\}
+ J^{(t)}_{\mu^{(t)}_{\ttheta}, \ttheta},
\end{equation*}
so that in particular $J_{\ttheta}=J^{(0)}_{\mu^{(0)},\ttheta}$.
Since the $\mu^{(\tau)}_{\ttheta}$ for $\tau \le t$ are constant in $\theta^{(t)}$, 
we thus obtain that the policy gradient $\nabla_{\theta^{(t)}} J_{\ttheta}$
can be expressed as 
\begin{equation}
  \label{eq:policygradientJt}
\nabla_{\theta^{(t)}} J_{\ttheta} = \nabla_{\theta^{(t)}} J^{(t)}_{\mu, \ttheta} \Big|_{\mu =  \mu^{(t)}_{\ttheta}}.
\end{equation}
Define, for $0\le t\le T$, 
\begin{equation}
  \label{eq:sigmatheta}
\sigma^{(t)}_{\ttheta} = \nabla_{\mu} J^{(t)}_{\mu, \ttheta} \Big|_{\mu =  \mu^{(t)}_{\ttheta}}.
\end{equation}
(The requisite differentiability in (\ref{eq:policygradientJt}) and (\ref{eq:sigmatheta}) follows from (\ref{eq:Ptmuthetadef}) and (\ref{eq:nlmcJrec}).)
From (\ref{eq:nlmcJrec}), (\ref{eq:policygradientJt}), (\ref{eq:sigmatheta}), and the chain rule we then obtain that
\begin{equation}
\label{eq:omegatthetaform}
\nabla_{\theta^{(t)}} J_{\ttheta} = \sum_{s,a} \mu^{(t)}_{\ttheta}(s) \times \nabla_{\theta^{(t)}} \pi^{(t)}_{\theta^{(t)}} (a | s) \times r^{(t)}_{\mu^{(t)}_{\ttheta}}(s, a)
+ \nabla_{\theta^{(t)}} \PPP^{(t)}_{\theta^{(t)}}(\mu) \Big|_{\mu = \mu_{\ttheta}^{(t)}} \times \sigma^{(t+1)}_{\ttheta}. 
   \end{equation}
As a result of (\ref{eq:Ptmuthetadef}), the second term on the right-hand side of (\ref{eq:omegatthetaform}) is
\begin{equation*}
\nabla_{\theta^{(t)}} \PPP^{(t)}_{\theta^{(t)}}(\mu) \Big|_{\mu = \mu_{\ttheta}^{(t)}} \times \sigma^{(t+1)}_{\ttheta} =
\sum_{s'} \left( \sum_{s,a} \mu^{(t)}_{\ttheta}(s) \times \nabla_{\theta^{(t)}} \pi^{(t)}_{\theta^{(t)}} (a | s) \times p^{(t)}_{\mu^{(t)}_{\ttheta}}(s' | s, a) \right) \times \sigma_{\ttheta}^{(t+1)}(s').
\end{equation*}
After grouping terms, the policy gradient can be expressed as
\begin{align*}
\nabla_{\theta^{(t)}} J_{\ttheta} &= \sum_{s, a} \mu^{(t)}_{\ttheta}(s) \times \nabla_{\theta^{(t)}} \pi^{(t)}_{\theta^{(t)}} (a | s) \times
\left( r^{(t)}_{\mu^{(t)}_{\ttheta}}(s, a) + \sum_{s'}  p^{(t)}_{\mu^{(t)}_{\ttheta}}(s' | s, a) \times \sigma^{(t+1)}_{\ttheta}(s') \right) \\
&= \sum_{s, a} \mu^{(t)}_{\ttheta}(s) \times \nabla_{\theta^{(t)}} \pi^{(t)}_{\theta^{(t)}} (a | s) \times Q_{\mu^{(t)}_{\ttheta},\sigma^{(t+1)}_{\ttheta}}^{(t)}(s, a),
\end{align*}
which establishes (\ref{eq:pgt}).

Next we turn to establishing (\ref{eq:sigmatrec}).
From (\ref{eq:nlmcJrec}) and the chain rule we obtain 
   \begin{align}
     \sigma^{(T)}_{\ttheta}&= r^{(T)}_{\mu^{(T)}_{\ttheta}} +\sum_{\tilde s} \mu^{(T)}_{\ttheta}(\tilde s)\times \nabla_\mu r^{(T)}_{\mu^{(T)}_{\ttheta}}(\tilde s) \nonumber \\
     \sigma^{(t)}_{\ttheta} &=  \nabla_\mu \left.\left( \sum_{\tilde s,a} \mu(\tilde s) \times \pi^{(t)}_{\theta^{(t)}}(a | \tilde s) \times r_{\mu}^{(t)}(\tilde s, a) \right)\right|_{\mu = \mu_{\ttheta}^{(t)}}  + \nabla_{\mu} \PPP^{(t)}_{\theta^{(t)}}(\mu) \Big|_{\mu = \mu_{\ttheta}^{(t)}} \times \sigma^{(t+1)}_{\ttheta}. \label{eq:sigmatthetaformt}
   \end{align}
Component $s$ of the first term on the right-hand side of (\ref{eq:sigmatthetaformt}) is 
\begin{equation*}
\sum_a \pi^{(t)}_{\theta^{(t)}} (a | s) \times r^{(t)}_{\mu^{(t)}_{\ttheta}}(s, a) + \sum_{\tilde s, a} \mu^{(t)}_{\ttheta} (\tilde s) \times \pi^{(t)}_{\theta^{(t)}} (a | \tilde s)
\times \frac{\partial}{\partial \mu(s)} r^{(t)}_{\mu^{(t)}_{\ttheta}} (\tilde s, a).
\end{equation*}
Using (\ref{eq:Ptmuthetadef}),  
component $s$ of the second term on the right-hand side of (\ref{eq:sigmatthetaformt}) is 
\begin{equation*}
\sum_{s'} \left\{ \sum_a \pi^{(t)}_{\theta^{(t)}} (a | s) \times p_{\mu^{(t)}_{\ttheta}}^{(t)}(s' | s, a) + \sum_{\tilde s, a} \mu^{(t)}_{\ttheta}(\tilde s) \times \pi^{(t)}_{\theta^{(t)}} (a | \tilde s) 
\times \frac{\partial}{\partial \mu(s)} p^{(t)}_{\mu^{(t)}_{\ttheta}}(s' | \tilde s, a) \right\} \times \sigma^{(t+1)}_{\ttheta}(s').
\end{equation*}
After grouping terms, (\ref{eq:sigmatthetaformt}) can be written as 
\begin{multline*}
\sigma^{(t)}_{\ttheta}(s) = \sum_a \pi^{(t)}_{\theta^{(t)}} (a | s) \times \left[ r^{(t)}_{\mu^{(t)}_{\ttheta}}(s, a) + \sum_{s'}  p^{(t)}_{\mu^{(t)}_{\ttheta}}(s' | s, a) \times \sigma^{(t+1)}_{\ttheta}(s') \right] \\
+ \sum_{\tilde s, a} \mu^{(t)}_{\ttheta} (\tilde s) \times \pi^{(t)}_{\theta^{(t)}} (a | \tilde s) \times
\left[ \frac{\partial}{\partial \mu(s)} r^{(t)}_{\mu^{(t)}_{\ttheta}} (\tilde s, a) + \sum_{s'} \frac{\partial}{\partial \mu(s)} p^{(t)}_{\mu^{(t)}_{\ttheta}}(s' | \tilde s, a)  \times \sigma^{(t+1)}_{\ttheta}(s')  \right],
\end{multline*}
which is (\ref{eq:sigmatrec}) for $t < T$ expressed in terms of $Q^{(t)}_{\mu^{(t)}_{\ttheta}, \sigma^{(t+1)}_{\ttheta}}(s, a)$.

\subsection{Proof of Proposition~\ref{prop:fisherrecursion}}

We fix pmf $\mu^{(0)}$ on state space $\SSS$ and a policy parameter vector $\ttheta$.  
The pmfs $\mu^{(t)}_{\ttheta}$ are recursively defined via $\mu^{(t+1)}_{\ttheta} = \PPP^{(t)}_{\theta^{(t)}}(\mu^{(t)}_{\ttheta})$, see (\ref{eq:Ptmuthetadef}).
For $\tau > t$ the pmf $\mu^{(\tau)}_{\ttheta}$ can be thought of as a function of $\mu^{(t)}_{\ttheta}$ and $\theta^{(t)},\ldots,\theta^{(T-1)}$,
and so with this understanding we write $\nabla_{\theta^{(t)}} \mu^{(\tau)}_{\ttheta}$ and $\nabla_{\mu^{(t)}_{\ttheta}} \mu^{(\tau)}_{\ttheta}$.  
(Keep in mind that these are matrices of sizes $M \times |\SSS|$ and $|\SSS| \times |\SSS|$, respectively, where $M$ is the ambient dimension of $\Theta^{(t)}$.) 
All expectations are understood to be under $q$.  
To shorten the expressions to follow we define random variables
\begin{align*} 
  Y^{(t)}(\ttheta) &= \nabla_{\theta^{(t)}} \log \pi^{(t)}_{\theta^{(t)}}(A^{(t)} | S^{(t)}),\\
  Z^{(t)}(\ttheta) &= \nabla_{\mu} \log p^{(t)}_{\mu^{(t)}_{\ttheta}}(S^{(t+1)} | S^{(t)}, A^{(t)}),
\end{align*}
In what follows, $0 \le t_1 \le t_2 <T$.

From the definition of $q_{\ttheta}$ in (\ref{qdef0}), $\log q_{\ttheta}(S^{(0)},A^{(0)}, \ldots, S^{(T)}) $ equals
\begin{equation*}
 \log \mu^{(0)}(S^{(0)}) + \sum_{\tau=0}^{T-1} \log \pi^{(\tau)}_{\theta^{(\tau)}}(A^{(\tau)}|S^{(\tau)}) + \sum_{\tau=0}^{T-1} \log p_{\mu^{(\tau)}_{\ttheta}}^{(\tau)}(S^{(\tau+1)}| S^{(\tau)}, A^{(\tau)}).
\end{equation*}
Using the chain rule, we obtain that
\begin{align*}
&\nabla_{\theta^{(t)}} \log q_{\ttheta}(S^{(0)},A^{(0)}, \ldots, S^{(T)}) \\ &= \nabla_{\theta^{(t)}} \log \pi^{(t)}_{\theta^{(t)}}(A^{(t)}|S^{(t)}) + 
\sum_{\tau=t+1}^{T-1} (\nabla_{\theta^{(t)}} \mu^{(\tau)}_{\ttheta}) \times \nabla_{\mu^{(\tau)}_{\ttheta}} \log p_{\mu^{(\tau)}_{\ttheta}}^{(\tau)}(S^{(\tau+1)}| S^{(\tau)}, A^{(\tau)}) \\
&= Y^{(t)}(\ttheta) + \sum_{\tau = t+1}^{T-1} (\nabla_{\theta^{(t)}} \mu^{(\tau)}_{\ttheta}) \times Z^{(\tau)}(\ttheta).
\end{align*}
Thus, $F^{(t_1, t_2)}(\ttheta)$ in (\ref{Ft1t2def}) can be written as
\begin{equation*}
\E_{\ttheta} \left[ \left(Y^{(t_1)}(\ttheta) + \sum_{\tau = t_1+1}^{T-1} (\nabla_{\theta^{(t_1)}} \mu^{(\tau)}_{\ttheta}) \times Z^{(\tau)}(\ttheta) \right) \times 
\left(Y^{(t_2)}(\ttheta) + \sum_{\tau = t_2+1}^{T-1} (\nabla_{\theta^{(t_2)}} \mu^{(\tau)}_{\ttheta}) \times Z^{(\tau)}(\ttheta) \right)^\top  \right].
\end{equation*}
There are many terms in the expression inside the expectation; however, most of these terms vanish. 

Fix some finite set $\XXX$, and let $p_\zeta$ be a pmf of $\XXX$ indexed by some parameter $\zeta$ such that
$\nabla p_\zeta(x)$ exists for each $x \in supp(p_\zeta)$.  For each $x \in supp(p_\zeta)$ the {\em score function} 
$\nabla_{\zeta} \log p_\zeta(x)$ has a well-known property that its expectation (under $p_\zeta$) is zero. Indeed, 
\begin{equation} \label{eq:keyscorefctfact}
\sum_x p_\zeta(x)\times \nabla_{\zeta} \log p_\zeta(x)
= \sum_{x} \nabla_{\zeta} p_\zeta(x) = 0,
\end{equation}
since $\sum_{x} p_\zeta(x) = 1$.

From the definition of $q_{\ttheta}$ in (\ref{qdef0}) and the nonlinear Markov property in (\ref{eq:defNLMP}), we deduce that
\begin{equation} \label{eq:Ztcond=0}
\E_{\ttheta}[Z^{(t)}(\ttheta) | S^{(0)}, A^{(0)}, \ldots, S^{(t)}, A^{(t)}] 
= \sum_{s^{(t+1)}} p_{\mu^{(t)}_{\ttheta}}(s^{(t+1)} | S^{(t)}, A^{(t)}) \times \nabla_{\mu^{(t)}_{\ttheta}} \log p_{\mu^{(t)}_{\ttheta}}(s^{(t+1)} | S^{(t)}, A^{(t)})
= 0,
\end{equation}
where the second equality follows by applying (\ref{eq:keyscorefctfact}) with $p = p_{\mu^{(t)}_{\ttheta}}(\cdot | S^{(t)}, A^{(t)})$ and $\zeta = \mu^{(t)}_{\ttheta}$.
From the definition of $q_{\ttheta}$ and the Markov policy property in (\ref{eq:piq}),
\begin{equation} \label{eq:Ytcond=0}
\E_{\ttheta}[Y^{(t)}(\ttheta) | S^{(0)}, A^{(0)}, \ldots, S^{(t)}] = \sum_{a^{(t)}} \pi^{(t)}_{\theta^{(t)}}(a^{(t)} | S^{(t)}) \times \nabla_{\theta^{(t)}} \log \pi^{(t)}_{\theta^{(t)}}(a^{(t)} | S^{(t)}) = 0,
\end{equation}
where the second equality follows by applying (\ref{eq:keyscorefctfact}) with $p = \pi^{(t)}_{\theta^{(t)}}(\cdot | S^{(t)})$ and $\zeta = \theta^{(t)}$.
Application of the tower law 
to (\ref{eq:Ztcond=0}) and (\ref{eq:Ytcond=0}) yields the following lemma.

\begin{lemma}
  \label{lem:reinserted}  
  The random vectors $Y^{(t)}(\ttheta)$ and $Z^{(t)}(\ttheta)$ satisfy the following identities:
  \begin{enumerate}
    \item 
      $\E_{\ttheta}[Z^{(t)}(\ttheta)] = \E_{\ttheta}[Y^{(t)}(\ttheta)] = 0$ for all $t$.
    \item
$\E_{\ttheta}[Z^{(t_1)}(\ttheta) \times (Z^{(t_2)}(\ttheta))^\top] = \E_{\ttheta}[Y^{(t_1)}(\ttheta) \times (Y^{(t_2)}(\ttheta))^\top] = 0$ for all $t_1,t_2$ with $t_1 \neq t_2$.
\item
$\E_{\ttheta}[Z^{(t_1)}(\ttheta) \times (Y^{(t_2)}(\ttheta))^\top] = 0$ for all $t_1,t_2$.
\end{enumerate}
\end{lemma}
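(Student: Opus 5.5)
The plan is to derive all three identities from the two conditional mean-zero facts (\ref{eq:Ztcond=0}) and (\ref{eq:Ytcond=0}) via the tower law. The key observation is which history each random vector depends on. $Y^{(t)}(\ttheta)$ is a function of $(S^{(t)},A^{(t)})$ alone. $Z^{(t)}(\ttheta)$ is a function of $(S^{(t)},A^{(t)},S^{(t+1)})$ alone. This holds because $\mu^{(t)}_{\ttheta}$ and $\theta^{(t)}$ are deterministic and do not depend on the random trajectory. Write $\mathcal H_t$ for the history $(S^{(0)},A^{(0)},\ldots,S^{(t)})$ and $\mathcal H_t^+$ for $\mathcal H_t$ together with $A^{(t)}$. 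Then $Y^{(t)}$ is $\mathcal H_t^+$-measurable with $\E_{\ttheta}[Y^{(t)}\mid\mathcal H_t]=0$, and $Z^{(t)}$ is $\mathcal H_{t+1}$-measurable with $\E_{\ttheta}[Z^{(t)}\mid\mathcal H_t^+]=0$. These histories form an increasing chain $\mathcal H_0\subset\mathcal H_0^+\subset\mathcal H_1\subset\cdots$.

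Part 1 then follows by taking expectations of the conditional identities. One technical point deserves a remark. The scores are only defined on the support, since $\log p$ and $\log\pi$ require positive arguments. However, under $q_{\ttheta}$ every realized triple $(S^{(t)},A^{(t)},S^{(t+1)})$ has positive probability, so the relevant quantities are defined almost surely. The sum in (\ref{eq:keyscorefctfact}) runs over the support, and the identity $\sum_x\nabla p_\zeta(x)=0$ holds with the support fixed locally. The smooth-extension assumption ensures that ambient gradients of the vanishing coordinates contribute nothing to these sums.

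For parts 2 and 3, the approach is to condition on the earlier of the two relevant sigma-fields, placing one factor inside as measurable and the other as conditionally mean zero.

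\textbf{Cross terms with $t_1<t_2$.} For $Z^{(t_1)}(Z^{(t_2)})^\top$, the factor $Z^{(t_1)}$ is $\mathcal H_{t_1+1}\subset\mathcal H_{t_2}^+$-measurable. Conditioning on $\mathcal H_{t_2}^+$ gives $Z^{(t_1)}\,\E[Z^{(t_2)}\mid\mathcal H_{t_2}^+]^\top=0$. The $Y$ case is analogous: $Y^{(t_1)}$ is $\mathcal H_{t_2}$-measurable, and we condition on $\mathcal H_{t_2}$. The case $t_1>t_2$ follows by transposition.

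\textbf{Mixed terms $Z^{(t_1)}(Y^{(t_2)})^\top$.} There are two cases.
\begin{itemize}
\item If $t_2\le t_1$, then $Y^{(t_2)}$ is $\mathcal H_{t_1}^+$-measurable. Conditioning on $\mathcal H_{t_1}^+$ kills $Z^{(t_1)}$.
\item If $t_2>t_1$, then $Z^{(t_1)}$ is $\mathcal H_{t_1+1}\subset\mathcal H_{t_2}$-measurable. Conditioning on $\mathcal H_{t_2}$ kills $Y^{(t_2)}$.
\end{itemize}

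The case $t_2=t_1$ is the one that needs care. It works because the action $A^{(t)}$ is drawn before $S^{(t+1)}$, so $Y^{(t)}$ is measurable with respect to the sigma-field on which $Z^{(t)}$ has conditional mean zero. The main obstacle is bookkeeping, not analysis. One must verify that the nonlinear Markov property (\ref{eq:defNLMP}) and the policy property (\ref{eq:piq}) deliver the conditional expectations given the \emph{full} histories $\mathcal H_t^+$ and $\mathcal H_t$, not merely given the current state. This is what (\ref{eq:Ztcond=0}) and (\ref{eq:Ytcond=0}) already state, and the finite state space ensures that all expectations exist.
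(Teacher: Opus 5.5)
Your proposal is correct and is exactly the paper's argument: the paper obtains the lemma in one line by applying the tower law to (\ref{eq:Ztcond=0}) and (\ref{eq:Ytcond=0}). Your nested histories $\mathcal H_t\subset\mathcal H_t^+\subset\mathcal H_{t+1}$, including the mixed case $t_1=t_2$ where $A^{(t)}$ is fixed before $S^{(t+1)}$ is drawn, simply make that step explicit.

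One aside should be dropped. The smooth extension property does not force gradients of zero-probability outcomes to vanish. For example, in the QPLEX kernel with $z\ge 1$ and $\mu(z)>0=\mu(z,1)$, a transition to counter $z-1$ has probability zero but a nonzero derivative with respect to $\mu(z,1)$. So what (\ref{eq:keyscorefctfact}) really needs there is a locally constant support. Since you, like the paper, take (\ref{eq:Ztcond=0}) and (\ref{eq:Ytcond=0}) as given, this does not affect your derivation of the lemma.
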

As a direct consequence of the identities in this lemma, $F^{(t_1, t_2)}(\ttheta)$ can be written as
\begin{align} 
F^{(t_1, t_2)}(\ttheta) &= \E_{\ttheta}[Y^{(t_1)}(\ttheta) \times (Y^{(t_2)}(\ttheta))^\top]
+ \sum_{\tau = t_2+1}^{T-1} (\nabla_{\theta^{(t_1)}} \mu^{(\tau)}_{\ttheta}) \times \E_{\ttheta}[ Z^{(\tau)}(\ttheta) \times (Z^{(\tau)}(\ttheta))^\top] \times (\nabla_{\theta^{(t_2)}} \mu^{(\tau)}_{\ttheta})^\top \nonumber \\
&= \E_{\ttheta}[Y^{(t_1)}(\ttheta) \times (Y^{(t_2)}(\ttheta))^\top]
+ \sum_{\tau = t_2+1}^{T-1} (\nabla_{\theta^{(t_1)}} \mu^{(\tau)}_{\ttheta}) \times M^{(\tau)}(\ttheta) \times (\nabla_{\theta^{(t_2)}} \mu^{(\tau)}_{\ttheta})^\top .   \label{Gt1t2derivedform0}
\end{align}
For $\tau > t_2$, the chain rule implies that
\begin{align}
\nabla_{\theta^{(t_1)}} \mu^{(\tau)}_{\ttheta} &= (\nabla_{\theta^{(t_1)}} \mu^{(t_2+1)}_{\ttheta}) \times (\nabla_{\mu^{(t_2+1)}_{\ttheta}} \mu^{(\tau)}_{\ttheta}), \nonumber \\
\nabla_{\theta^{(t_2)}} \mu^{(\tau)}_{\ttheta} &= (\nabla_{\theta^{(t_2)}} \mu^{(t_2+1)}_{\ttheta}) \times (\nabla_{\mu^{(t_2+1)}_{\ttheta}} \mu^{(\tau)}_{\ttheta}), \label{eq:gradthetamutau}
\end{align}
and so the summand on the right-hand side of (\ref{Gt1t2derivedform0}) can be expressed as
\begin{equation*}
(\nabla_{\theta^{(t_1)}} \mu^{(t_2+1)}_{\ttheta}) \times G^{(t_2+1)}(\ttheta) \times (\nabla_{\theta^{(t_2)}} \mu^{(t_2+1)}_{\ttheta})^\top,
\end{equation*}
where the matrices $G^{(t)}(\ttheta)$ are defined, for $1\le t\le T$, via 
\begin{equation}
  \label{eq:Gtrep1}
G^{(t)}(\ttheta) = \sum_{\tau = t}^{T-1} (\nabla_{\mu^{(t)}_{\ttheta}} \mu^{(\tau)}_{\ttheta}) \times M^{(\tau)}(\ttheta)  \times (\nabla_{\mu^{(t)}_{\ttheta}} \mu^{(\tau)}_{\ttheta})^\top,
\end{equation}
which should be interpreted as zero for $t=T$.
Thus, 
\begin{equation*}
F^{(t_1, t_2)}(\ttheta) = \E_{\ttheta}[Y^{(t_1)}(\ttheta) \times (Y^{(t_2)}(\ttheta))^\top]  + (\nabla_{\theta^{(t_1)}} \mu^{(t_2+1)}_{\ttheta}) \times G^{(t_2+1)} (\ttheta)\times (\nabla_{\theta^{(t_2)}} \mu^{(t_2+1)}_{\ttheta})^\top,
\end{equation*}
where the first term on the right-hand side of (\ref{Gt1t2derivedform0}) is only nonzero if $t_1 = t_2 = t$, in which case it equals $K^{(t)}(\ttheta)$, as claimed.

We proceed to the calculation of $G^{(t)}(\ttheta)$.
Since $\nabla_{\mu^{(t)}_{\ttheta}} \mu^{(t)}_{\ttheta} $ is the identity matrix and, similarly to (\ref{eq:gradthetamutau}), for $\tau>t$,
\begin{equation*}
  \nabla_{\mu^{(t)}_{\ttheta}} \mu^{(\tau)}_{\ttheta} = (\nabla_{\mu^{(t)}_{\ttheta}} \mu^{(t+1)}_{\ttheta}) \times (\nabla_{\mu^{(t+1)}_{\ttheta}} \mu^{(\tau)}_{\ttheta}),
\end{equation*}
it follows from its representation in (\ref{eq:Gtrep1}) that $G^{(t)}(\ttheta)$ can be recursively calculated as claimed.

\subsection{Proof of Proposition~\ref{prop:criticalpoints}}

The necessary condition for optimality states that for each $t$ and $z$, for all feasible directions $d \in \R^{|\AAA|}$,
\begin{equation*}
d^\top \times \nabla_{\theta^{(t)}_z} J_{\ttheta} \le 0.
\end{equation*}
In view of (\ref{eq:omegast}), this is equivalent to having,
for each $t$ and $z$ with $\mu^{(t)}_{\ttheta}(\SSS_z)>0$,
\begin{equation*}
\sum_a d_a \times \overline Q^{(t)}_{z,\ttheta} (a)\le 0.
\end{equation*}
The sum of the elements of a feasible direction $d$ must be zero.  
If all elements $\ttheta$ are positive, then the elements of each $d$ may be taken to be positive or negative.  A straightforward proof by contradiction readily shows that, for each $t$ and $z$, the $\overline Q^{(t)}_{z,\ttheta}(a)$ must then all be equal.

Consider some time $t$ and expert $z$ for which at least one of the elements
of $\theta^{(t)}_z$ is zero.
Here, a feasible direction $d$ has the additional property that $d_{\tilde a} \ge 0$ for each $\tilde a$
with $\theta^{(t)}_z(\tilde a)=0$.
Taking $d_{\tilde a}=0$ for each such $\tilde a$, the argument in the previous paragraph
shows that the $\overline Q^{(t)}_{z,\ttheta}(a)$ for $a$ in the support in $\theta^{(t)}_z$ must then all be equal.
Let $\bar q^{(t)}_z$ denote this common value. 
It remains to show that $\overline Q^{(t)}_{z,\ttheta}(\bar a)\le \bar q^{(t)}_z$ if $\theta^{(t)}_z(\bar a) = 0$.
Fix such an $\bar a$. Set $d_{\tilde a} = 0$ for all $\tilde a \neq \bar a$ with $\theta^{(t)}_z(\tilde a) = 0$, so that
\begin{equation*}
  \sum_a d_a \times \overline Q_{z,\ttheta}^{(t)} (a) =d_{\bar a} \times \overline Q^{(t)}_{z,\ttheta}(\bar a)
-d_{\bar a} \times \bar q^{(t)}_z.
\end{equation*}
Since $d_{\tilde a}$ can be taken as positive, we deduce that $\overline Q^{(t)}_{z,\ttheta}(\bar a)\le \bar q^{(t)}_z$, as required.

\subsection{Proof of Proposition~\ref{prop:naturalgradientmoesoft}}

We begin with a lemma that gives easily computable expressions for $\hat F(\psi)$ and its Moore-Penrose inverse $\hat F(\psi)^\dagger$
used in the proof.  
The matrix $P$ below is given by 
\begin{equation} \label{eq:matrixPdef}
P = I - \frac{1}{|\AAA|} \bm 1 \times \bm 1^\top,
\end{equation}
where $\bm 1$ is the column vector of size $|\AAA|$ of all ones.
The symmetric matrix $\hat F(\psi)$ has one singularity (each row and column sum vanishes), and so its null space is the linear span of $\bm 1$
and its range consists of all vectors $v$ orthogonal to $\bm 1$.  
Since  $P$ is idempotent ($P^2 = P$) and $P \times v = v$ for all $v$ such that $v^\top \times \bm 1 = 0$,
the symmetric matrix $P$ is the orthogonal projection matrix onto the range of $\hat F(\psi)$.
Observe that $P \times v$ merely \emph{centers} $v$ by subtracting the average from each component so that the sum of the components becomes zero, e.g., if $v = (1, 3, 8)^\top$, the average is 4, so $P \times v = (-3, -1, 4) ^\top$.  Since centering a vector $v$ involves subtracting the average from each component, this operation is unchanged if the same constant is subtracted from every entry of $v$, as the mean (and thus the centered result) remains the same.

\begin{lemma} \label{lem:softmaxFIM}
We have: 
\begin{itemize}
\item[(a)] $\hat F(\psi) = \diag(\hat\pi_\psi) - \hat\pi_\psi\times \hat\pi_\psi^\top$.
\item[(b)] $\hat F(\psi)^\dagger =  P \times \diag(\hat\pi_\psi)^{-1} \times P$.
\end{itemize}
\end{lemma}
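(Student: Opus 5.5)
For part (a), I will compute the score of the softmax directly. From (\ref{eq:hatpi}), $\log\hat\pi_\psi(a)=\psi_a-\log\sum_{\tilde a}\exp(\psi_{\tilde a})$. Hence $\nabla_\psi\log\hat\pi_\psi(a)=e_a-\hat\pi_\psi$, where $e_a$ is the $a$-th standard basis vector. Substituting into the definition of $\hat F(\psi)$ gives
\[
\sum_a\hat\pi_\psi(a)(e_a-\hat\pi_\psi)(e_a-\hat\pi_\psi)^\top .
\]
Expanding, and using $\sum_a\hat\pi_\psi(a)e_ae_a^\top=\diag(\hat\pi_\psi)$ and $\sum_a\hat\pi_\psi(a)e_a=\hat\pi_\psi$, this equals
\[
\diag(\hat\pi_\psi)-2\hat\pi_\psi\hat\pi_\psi^\top+\hat\pi_\psi\hat\pi_\psi^\top=\diag(\hat\pi_\psi)-\hat\pi_\psi\hat\pi_\psi^\top,
\]
which is (a).

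For part (b), I set $D=\diag(\hat\pi_\psi)$, which is invertible since softmax probabilities are strictly positive, and $X=PD^{-1}P$. I will verify the four Moore--Penrose conditions. The key identities are $\hat F(\psi)\bm 1=\hat\pi_\psi-\hat\pi_\psi(\hat\pi_\psi^\top\bm 1)=0$, so that $\hat F(\psi)P=\hat F(\psi)=P\hat F(\psi)$ by symmetry.

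The central step is to compute $\hat F(\psi)D^{-1}$. We have $DD^{-1}=I$ and $\hat\pi_\psi\hat\pi_\psi^\top D^{-1}=\hat\pi_\psi\bm 1^\top$. Therefore
\[
\hat F(\psi)D^{-1}=I-\hat\pi_\psi\bm 1^\top,
\]
and consequently
\[
\hat F(\psi)X=\hat F(\psi)D^{-1}P=(I-\hat\pi_\psi\bm 1^\top)P=P,
\]
because $\bm 1^\top P=0$. Since $P$ is symmetric, this shows that $\hat F(\psi)X$ is symmetric. The same computation applied to the transposes gives $X\hat F(\psi)=P\,D^{-1}\hat F(\psi)=P(I-\bm 1\hat\pi_\psi^\top)=P$, which is also symmetric. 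It then follows that $\hat F(\psi)X\hat F(\psi)=P\hat F(\psi)=\hat F(\psi)$ and $X\hat F(\psi)X=PX=X$, using $P^2=P$. All four conditions hold, so $X=\hat F(\psi)^\dagger$.

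The only delicate point is keeping the order of the rank-one products straight and confirming that $D^{-1}$ exists. Alternatively, one can argue from the structure already noted before the lemma: $P$ is the projection onto the range of $\hat F(\psi)$, and $\hat F(\psi)X=X\hat F(\psi)=P$ with $X$ symmetric and of the same range, which characterizes the pseudoinverse. Either way, I expect no real obstacle.
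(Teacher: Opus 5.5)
Your proof is correct and takes essentially the same approach as the paper. Part (a) uses the same score computation. In part (b) you rely on the same key identity $D^{-1}\hat F(\psi)=I-\bm 1\,\hat\pi_\psi^\top$ (and its transpose) together with $P\bm 1=0$ to obtain $\hat F(\psi)X=X\hat F(\psi)=P$. The only difference is the closing step: you verify all four Penrose conditions explicitly, whereas the paper concludes from the fact that $X$ annihilates the null space spanned by $\bm 1$.
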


\begin{proof}
Interpreting $\hat\pi_{\psi}$ as
a (column) vector rather than a function, the corresponding score function is 
\begin{equation} \label{eq:scoresoftmax}
\nabla_{\psi} \log \hat \pi_{\psi}(a) = e_a - \hat \pi_{\psi}
\end{equation}
where $e_a$ denotes the $a$-th unit vector.  
Using this identity, the Fisher information
matrix  $\hat F(\psi)$ corresponding to $\hat \pi_\psi$ is 
\begin{align*}
\hat F(\psi) &= \sum_a \hat \pi_{\psi}(a) \times \nabla_{\psi} \log \hat \pi_{\psi}(a) \times (\nabla_{\psi} \log \hat \pi_{\psi}(a))^\top \\
&= \sum_a \hat \pi_{\psi}(a) \times (e_a \times e_a^\top - e_a \times \hat \pi_{\psi}^\top - \hat \pi_{\psi} \times e_a^\top + \hat \pi_{\psi} \times \hat \pi_{\psi}^\top)\\
&= \diag(\hat \pi_{\psi}) - 2\times \left( \sum_a \hat \pi_{\psi}(a) \times e_a\right) \times \hat \pi_{\psi}^\top + \hat \pi_{\psi} \times \hat \pi_{\psi}^\top  \\
&= \diag(\hat\pi_{\psi}) - \hat \pi_{\psi} \times \hat \pi_{\psi}^\top,
\end{align*}
which establishes part (a).
As for part (b), we first establish that $\hat F(\psi)$ and 
$P \times \diag(\hat \pi_{\psi})^{-1} \times P$ are inverses on the subspace orthogonal to $\bm 1$, i.e., 
\begin{equation*}
  \hat F(\psi) \times (P \times \diag(\hat \pi_{\psi})^{-1} \times P)= (P \times \diag(\hat \pi_{\psi})^{-1} \times P) \times \hat F(\psi) = P.
\end{equation*} 
Since the sum of the components of each row or column of $\hat F(\psi)$ is zero it follows that 
$P \times \hat F(\psi) = \hat F(\psi) \times P = \hat F(\psi)$.  Due to symmetry it remains to 
show that $P \times \diag(\hat \pi_{\psi})^{-1} \times \hat F(\psi) = P$.
Indeed, 
\begin{equation*}
P \times \diag(\hat \pi_{\psi})^{-1} \times \hat F(\psi) = P \times (I - \bm 1 \times \hat \pi_{\psi}^\top) = P - P \times \bm 1 \times \hat \pi_{\psi}^\top = P,
\end{equation*} 
since $P \times \bm 1 = 0$. 
The claim now follows from the fact that $P \times \diag(\hat \pi_{\psi})^{-1} \times P$ annihilates $\bm 1$ and therefore the null space of $\hat F(\psi)$.
\end{proof}

We now proceed to the proof of this proposition.
Both sides of (\ref{eq:naturalgradientmoesoft}) are trivially equal to zero for time-expert pairs $(t,z)$ with $\mu^{(t)}_{\ggamma}(\SSS_z)=0$.
Due to the block diagonal structure of $\tilde F(\ggamma)$, each subvector $\widetilde\nabla_{\gamma^{(t)}_z}  J_{\ggamma}$
corresponding to time $t$ and expert $z$ with $\mu^{(t)}_{\ggamma}(\SSS_z)>0$ satisfies
\begin{equation} \label{eq:natgradrep}
\widetilde\nabla_{\gamma^{(t)}_z}  J_{\ggamma}=\frac  1{\mu^{(t)}_{\ggamma}(\SSS_z)}\times \hat F(\gamma^{(t)}_z)^\dagger \times \nabla_{\gamma^{(t)}_z} J_{\ggamma}
= \frac  1{\mu^{(t)}_{\ggamma}(\SSS_z)}\times P\times \diag(\hat\pi_{\gamma_z^{(t)}})^{-1} \times \nabla_{\gamma^{(t)}_z} J_{\ggamma},
\end{equation}
where the second equality follows from Lemma~\ref{lem:softmaxFIM}(b) and the fact that $\nabla_{\gamma_z^{(t)}} J_{\ggamma}$ is orthogonal to $\bm 1$.
In what follows we fix such a $(t,z)$ and 
show that the right-hand side of (\ref{eq:natgradrep}) has the desired equivalent form.

From the policy gradient theorem (Theorem~\ref{thm:pgt}) we deduce that
\begin{align*}
  \nabla_{\gamma^{(t)}} J_{\ggamma} &= \sum_{s, a} \mu^{(t)}_{\ggamma}(s) \times \left(\nabla_{\gamma^{(t)}} \hat\pi_{\gamma^{(t)}_{z(s)}}(a) \right)\times Q_{\mu^{(t)}_{\ggamma},\sigma^{(t+1)}_{\ggamma}}^{(t)}(s, a)\\
&= \sum_{z} \mu^{(t)}_{\ggamma}(\SSS_z)\times
\sum_a \left(\nabla_{\gamma^{(t)}} \hat\pi_{\gamma^{(t)}_z}(a) \right)\times \overline Q^{(t)}_{z,\ggamma}(a),
\end{align*}
which, using (\ref{eq:scoresoftmax}), in turn implies that
\begin{equation*}
\nabla_{\gamma_z^{(t)}} J_{\ggamma}  = \mu^{(t)}_{\ggamma} (\SSS_z) \times \sum_{\tilde a} \hat \pi_{\gamma_z^{(t)}}(\tilde a) \times \overline Q^{(t)}_{z,\ggamma}(\tilde a) \times \left( e_{\tilde a} - \hat \pi_{\gamma_z^{(t)}} \right).
\end{equation*}
Since 
\begin{equation*}
  \hat \pi_{\gamma_z^{(t)}}(\tilde a) \times P\times \diag(\hat\pi_{\gamma_z^{(t)}})^{-1}\times\left( e_{\tilde a} - \hat \pi_{\gamma_z^{(t)}} \right) = P\times \left( e_{\tilde a} - \bm 1 \right) = P\times e_{\tilde a} = e_{\tilde a} - \frac1{|\AAA|} \bm 1,
\end{equation*}
the right-hand side of (\ref{eq:natgradrep}) becomes the desired expression.

\subsection{Proof of Theorem~\ref{thm:approximatenaturalgradient}}

Throughout, fix an arbitrary pure policy $\ttheta^*$.
For brevity, we suppress the dependence on $n$ and simply write $\ggamma$ for $\ggamma_n$ 
(with $n$ sufficiently large) and  $\ttheta(\ggamma)\to\ttheta^*$ for the convergence along the subsequence $\{\ggamma_n\}$.
Since the set $\UUU(\ggamma_n)$ is independent of $n$, we denote it simply by $\UUU$.

We write $B$ for the block diagonal matrix consisting of $T \times |\ZZZ|$ blocks $B^{((t,z),(t,z))}$ each of size $|\AAA| \times |\AAA|$, where block 
$B^{((t,z),(t,z))} = 0$ for all `unreachable' time-expert pairs $(t,z) \in \UUU$ and $B^{((t,z),(t,z))}  = P$ otherwise, where $P$ is defined in (\ref{eq:matrixPdef}).

\begin{lemma}
  Under the conditions of Theorem~\ref{thm:approximatenaturalgradient}, we have that
\begin{equation*}
  \widehat \nabla_{\ggamma}   J_{\ggamma} - \widetilde\nabla_{\ggamma}  J_{\ggamma}  =
  \left[(\tilde F(\ggamma)^\dagger \times F(\ggamma))^\dagger - B\right] \times   \widetilde\nabla_{\ggamma}  J_{\ggamma}.
\end{equation*}
\end{lemma}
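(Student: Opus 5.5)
The plan is to verify the identity as a matrix statement, using only condition~1 of Theorem~\ref{thm:approximatenaturalgradient} (so $n$ is large enough that $\UUU(\ggamma_n)=\UUU$ and the null space of $F(\ggamma)$ has the stated dimension). Since $\widehat\nabla_{\ggamma} J_{\ggamma}=F(\ggamma)^\dagger\nabla_{\ggamma}J_{\ggamma}$ and $\widetilde\nabla_{\ggamma}J_{\ggamma}=\tilde F(\ggamma)^\dagger\nabla_{\ggamma}J_{\ggamma}$, it suffices to prove the two identities
\begin{equation*}
F(\ggamma)^\dagger=(\tilde F(\ggamma)^\dagger F(\ggamma))^\dagger\,\tilde F(\ggamma)^\dagger
\qquad\text{and}\qquad
B\,\tilde F(\ggamma)^\dagger=\tilde F(\ggamma)^\dagger .
\end{equation*}
Subtracting $\widetilde\nabla_{\ggamma}J_{\ggamma}=B\,\widetilde\nabla_{\ggamma}J_{\ggamma}$ from $\widehat\nabla_{\ggamma}J_{\ggamma}$ then gives the claim.

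\textbf{Step 1: the common null space.} Let $N_0$ be the span of two families of directions: the vectors adding a common constant to $\gamma^{(t)}_z$ for each pair $(t,z)$, and the $(|\AAA|-1)$-dimensional centered directions inside the blocks $(t,z)\in\UUU$. Then $\dim N_0=T|\ZZZ|+(|\AAA|-1)|\UUU|$.

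As argued before the theorem, $N_0\subseteq\ker F(\ggamma)$. Condition~1 supplies equal dimensions, hence $\ker F(\ggamma)=N_0$.

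For $\tilde F(\ggamma)$, use its block structure. The block for $(t,z)$ is $\mu^{(t)}_{\ggamma}(\SSS_z)\hat F(\gamma^{(t)}_z)$. By Lemma~\ref{lem:softmaxFIM}, $\hat F(\gamma^{(t)}_z)$ has kernel exactly $\mathrm{span}(\bm 1)$, since $\hat\pi$ has full support in the interior. So the block has kernel $\mathrm{span}(\bm 1)$ when the pair is reachable, and it is zero otherwise. Hence $\ker\tilde F(\ggamma)=N_0$ as well.

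Both matrices are symmetric positive semidefinite, so both have range $R:=N_0^\perp$. The matrix $B$ is exactly the orthogonal projector onto $R$: it applies $P$ (which removes the $\bm 1$ component) on reachable blocks and $0$ on unreachable ones. Since $\mathrm{range}(\tilde F^\dagger)=R$, this gives $B\tilde F^\dagger=\tilde F^\dagger$.

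\textbf{Step 2: the pseudo-inverse identity.} Decompose $\R^{T|\ZZZ||\AAA|}=R\oplus N_0$ orthogonally. With respect to this decomposition, $F=F_R\oplus 0$ and $\tilde F=\tilde F_R\oplus 0$, where $F_R,\tilde F_R$ are invertible on $R$. Consequently $F^\dagger=F_R^{-1}\oplus 0$ and $\tilde F^\dagger=\tilde F_R^{-1}\oplus 0$.

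The product $A:=\tilde F^\dagger F=\tilde F_R^{-1}F_R\oplus 0$ is generally not symmetric. However, its kernel is $N_0$ and its range is $R=N_0^\perp$. For such a matrix the Moore--Penrose conditions are satisfied by $(\tilde F_R^{-1}F_R)^{-1}\oplus 0=F_R^{-1}\tilde F_R\oplus 0$: both $AA^\dagger$ and $A^\dagger A$ equal the orthogonal projector onto $R$, which is symmetric. Therefore
\begin{equation*}
(\tilde F^\dagger F)^\dagger\tilde F^\dagger=F_R^{-1}\tilde F_R\tilde F_R^{-1}\oplus 0=F^\dagger .
\end{equation*}

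\textbf{Main obstacle.} The only delicate point is Step~1: pseudo-inverses do not compose in general, and the identity genuinely needs $\ker F=\ker\tilde F$. I would be careful to show that the rank hypothesis in condition~1 is exactly what forces $\ker F(\ggamma)$ to equal the structural subspace $N_0$. I would also check the full-support claim for the softmax blocks, which guarantees that $\ker\tilde F(\ggamma)$ is no larger than $N_0$. The remaining steps are routine linear algebra.
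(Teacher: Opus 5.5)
Your proposal is correct and follows the paper's route. You reduce to $B\,\widetilde\nabla_{\ggamma}J_{\ggamma}=\widetilde\nabla_{\ggamma}J_{\ggamma}$ (the paper writes $B=\tilde F(\ggamma)^\dagger\tilde F(\ggamma)$ and uses $\tilde F^\dagger\tilde F\tilde F^\dagger=\tilde F^\dagger$) and to $F(\ggamma)^\dagger=(\tilde F(\ggamma)^\dagger F(\ggamma))^\dagger\tilde F(\ggamma)^\dagger$, proved via the common null space and invertibility on its orthogonal complement. Your version is more careful than the paper's: you check $\ker\tilde F(\ggamma)=N_0$ directly from the softmax block structure, which the paper attributes to condition~1 alone, and you verify the Moore--Penrose conditions for the non-symmetric product $\tilde F(\ggamma)^\dagger F(\ggamma)$.
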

\begin{proof}
We first note that $B=\tilde F(\ggamma)^\dagger \times \tilde F(\ggamma)$ by the definitions of $B$ and $\tilde F(\ggamma)$. 
From the standard Moore-Penrose identity $\tilde F(\ggamma)^\dagger  \times \tilde F(\ggamma) \times \tilde F(\ggamma)^\dagger= \tilde F(\ggamma)^\dagger  $, we obtain that
\begin{equation*}
  B\times  \widetilde\nabla_{\ggamma}  J_{\ggamma} =
  B\times \tilde F(\ggamma)^\dagger \times \nabla_{\ggamma} J_{\ggamma}
  =\tilde F(\ggamma)^\dagger  \times \tilde F(\ggamma) \times \tilde F(\ggamma)^\dagger \times \nabla_{\ggamma} J_{\ggamma} = \tilde F(\ggamma)^\dagger \times \nabla_{\ggamma} J_{\ggamma} = \widetilde\nabla_{\ggamma}  J_{\ggamma}.
\end{equation*}

It remains to show that
\begin{equation*}
    \widehat \nabla_{\ggamma}   J_{\ggamma}  =
  (\tilde F(\ggamma)^\dagger \times F(\ggamma))^\dagger \times   \widetilde\nabla_{\ggamma}  J_{\ggamma}.
\end{equation*}
Since $\widehat \nabla_{\ggamma}   J_{\ggamma}  = F(\ggamma)^\dagger \times \nabla_{\ggamma}   J_{\ggamma}$ and
$\widetilde\nabla_{\ggamma}  J_{\ggamma} = \tilde F(\ggamma)^\dagger \times \nabla_{\ggamma}   J_{\ggamma}$,
it suffices to argue that
$F(\ggamma)^\dagger =(\tilde F(\ggamma)^\dagger \times F(\ggamma))^\dagger \times \tilde F(\ggamma)^\dagger$.
By the local identifiability condition in the theorem,
the null spaces of $F(\ggamma)$ and $\tilde F(\ggamma)$ are eventually identical and both eventually have dimension equal to $|\ZZZ|\times T+(|\AAA|-1)\times |\UUU|$.
Both matrices are invertible on the orthogonal complement of this null space and the 
Moore-Penrose inverses act as ordinary inverses. The required identity then follows from the standard matrix identity 
$(A_1^{-1}\times A_2)^{-1}\times A_1^{-1} = A_2^{-1}$ for nonsingular matrices $A_1$ and $A_2$,
applied to the restrictions of $F(\ggamma)$ and $\tilde F(\ggamma)$ to their common range space.
\end{proof}

By the local identifiability condition in the theorem,
$F(\ggamma)$ and $\tilde F(\ggamma)$ have a common range space, and so $B$ equals the orthogonal projection matrix on this common range space.
In particular, $B^\dagger =B$.
Since Moore-Penrose inversion is continuous on the set of matrices with constant rank,
we immediately obtain the desired result $(\tilde F(\ggamma)^\dagger \times F(\ggamma))^\dagger \to B$ once 
we establish that, as $\ttheta(\ggamma)\to\ttheta^*$,
\begin{equation*}
\tilde F(\ggamma)^\dagger \times F(\ggamma) \to  B.
\end{equation*}

Our starting point is the set of identities in Proposition~\ref{prop:fisherrecursion}.
We start with the block diagonal element
  $\tilde F^{(t,t)}(\ggamma)^\dagger \times F^{(t,t)}(\ggamma) $.
  The term $K^{(t)}(\ggamma)$ in $F^{(t,t)}(\ggamma)$ is a block diagonal matrix with
block diagonal elements of the form
$\mu^{(t)}_{\ggamma}(\SSS_z)\times \hat F(\gamma^{(t)}_z)$.
As a result, 
\begin{equation*}
\tilde F^{(t,t)}(\ggamma)^\dagger \times K^{(t)}(\ggamma)
\end{equation*}
equals the block diagonal matrix consisting of $|\ZZZ|$ blocks $E^{((t,z),(t,z))}$.
To complete the proof, it remains to establish that, for $t_1 \le t_2$,
\begin{equation}
  \label{eq:summandFt1t2}
  \tilde F^{(t_1,t_1)} (\ggamma)^\dagger \times \nabla_{\gamma^{(t_1)}} \mu^{(t_2+1)}_{\ggamma}
  \times G^{(t_2+1)}(\ggamma) \times (\nabla_{\gamma^{(t_2)}} \mu^{(t_2+1)}_{\ggamma})^\top \to 0
\end{equation}
and that, for $t_1>t_2$,
\begin{equation}
  \label{eq:summandFt1t2swap}
  (\tilde F^{(t_1,t_1)} (\ggamma)^\dagger \times \nabla_{\gamma^{(t_1)}} \mu^{(t_1+1)}_{\ggamma}) \times G^{(t_1+1)}(\ggamma) \times (\nabla_{\mu^{(t_2+1)}_{\ggamma}} \mu^{(t_1+1)}_{\ggamma})^\top \times (\nabla_{\gamma^{(t_2)}} \mu^{(t_2+1)}_{\ggamma})^\top \to 0.
\end{equation}
We focus on (\ref{eq:summandFt1t2}) since the arguments for (\ref{eq:summandFt1t2swap}) are identical.
Since
\begin{equation*} 
\nabla_{\gamma^{(t_1)}} \mu^{(t_2+1)}_{\ggamma} = \nabla_{\gamma^{(t_1)}} \mu^{(t_1+1)}_{\ggamma} \times \nabla_{\mu^{(t_1+1)}_{\ggamma}} \mu^{(t_2+1)}_{\ggamma},
\end{equation*}
we shall show that, as $\ttheta(\ggamma)\to\ttheta^*$,
\begin{equation*}
  (\tilde F^{(t_1,t_1)} (\ggamma)^\dagger \times \nabla_{\gamma^{(t_1)}} \mu^{(t_1+1)}_{\ggamma}) \times \nabla_{\mu^{(t_1+1)}_{\ggamma}} \mu^{(t_2+1)}_{\ggamma}\times G^{(t_2+1)}(\ggamma) \times (\nabla_{\gamma^{(t_2)}} \mu^{(t_2+1)}_{\ggamma})^\top \to 0.
\end{equation*}

The following lemma establishes that the last term in (\ref{eq:summandFt1t2}) converges to zero.

\begin{lemma} \label{lem:gradgammatofmutgoesto0}
  As $\ttheta(\ggamma)\to\ttheta^*$, we have that, for every $0\le t< T$,
\begin{equation*}
  \nabla_{\gamma^{(t)}} \mu^{(t+1)}_{\ggamma} \to 0.
\end{equation*}
\end{lemma}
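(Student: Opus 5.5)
We would compute $\nabla_{\gamma^{(t)}} \mu^{(t+1)}_{\ggamma}$ explicitly from the forward map (\ref{eq:Ptmuthetadef}) and show that every entry is a bounded quantity multiplied by a factor of the form $\hat\pi(a)(1-\hat\pi(a))$ or $\hat\pi(a)\hat\pi(\tilde a)$. Such factors vanish as the softmax distributions become pure. The key observation is that in $\PPP^{(t)}_{\theta^{(t)}}(\mu)$, evaluated at $\mu=\mu^{(t)}_{\ggamma}$, the kernel $p^{(t)}_\mu$ and $\mu$ itself do not depend on $\gamma^{(t)}$; only the policy factor does. So for $s'\in\SSS$ and expert $z$,
\begin{equation*}
\nabla_{\gamma^{(t)}_z} \mu^{(t+1)}_{\ggamma}(s') = \sum_{s\in\SSS_z}\sum_a \mu^{(t)}_{\ggamma}(s)\times \nabla_{\gamma^{(t)}_z}\hat\pi_{\gamma^{(t)}_z}(a)\times p^{(t)}_{\mu^{(t)}_{\ggamma}}(s'|s,a).
\end{equation*}
The gradient with respect to $\gamma^{(t)}_{z'}$ for $z'\neq z$ receives no contribution from states in $\SSS_z$.

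Next we use the softmax score identity (\ref{eq:scoresoftmax}), which gives $\nabla_\psi \hat\pi_\psi(a) = \hat\pi_\psi(a)(e_a-\hat\pi_\psi)$. Its $\tilde a$-component is $\hat\pi_\psi(a)(\mathbb 1\{a=\tilde a\}-\hat\pi_\psi(\tilde a))$. Since $\ttheta(\ggamma)\to\ttheta^*$ and $\ttheta^*$ is pure, for each $(t,z)$ the vector $\hat\pi_{\gamma^{(t)}_z}=\theta^{(t)}_z$ converges to a unit vector $e_{a^*}$.

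We then check that each component $\hat\pi(a)(\mathbb 1\{a=\tilde a\}-\hat\pi(\tilde a))$ tends to zero, in three cases.
\begin{itemize}
\item If $a\neq a^*$, then $\hat\pi(a)\to0$ and the second factor is bounded by $1$.
\item If $a=a^*$ and $\tilde a=a^*$, the component equals $\hat\pi(a^*)(1-\hat\pi(a^*))\to 0$.
\item If $a=a^*$ and $\tilde a\neq a^*$, the component equals $-\hat\pi(a^*)\hat\pi(\tilde a)\to0$.
\end{itemize}
Hence $\nabla_\psi\hat\pi_\psi(a)\to0$ for every $a$, as a vector in $\R^{|\AAA|}$.

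Finally we bound the remaining factors uniformly: $0\le\mu^{(t)}_{\ggamma}(s)\le 1$ and $0\le p^{(t)}_{\mu}(s'|s,a)\le1$. The sum is finite, over $|\SSS|\times|\AAA|$ terms. Therefore each entry of the matrix $\nabla_{\gamma^{(t)}}\mu^{(t+1)}_{\ggamma}$ is at most a finite sum of terms bounded by $\|\nabla_{\gamma^{(t)}_z}\hat\pi_{\gamma^{(t)}_z}(a)\|$, and so it tends to $0$.

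The one subtle point is the smooth extension convention. We need the ambient derivative of $\PPP^{(t)}_{\theta^{(t)}}(\mu)$ with respect to $\gamma^{(t)}$ to involve no derivative of $p^{(t)}_\mu$. This holds because $\mu$ is held fixed in the definition $\nabla_{\theta^{(t)}}\mu^{(t+1)}_{\ggamma}=\nabla_{\theta^{(t)}}\PPP^{(t)}_{\theta^{(t)}}(\mu)|_{\mu=\mu^{(t)}_{\ggamma}}$, so no argument about continuity of $p$ in $\mu$ is needed. Beyond this, the argument is a direct computation and we do not expect a genuine obstacle.
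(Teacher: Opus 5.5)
Your proposal is correct and follows essentially the same route as the paper. Both arguments differentiate the forward map only through the policy factor, apply the softmax identity $\nabla_\psi\hat\pi_\psi(a)=\hat\pi_\psi(a)(e_a-\hat\pi_\psi)$, and use that this vector tends to $0$ as $\hat\pi_{\gamma^{(t)}_z}\to e_{a(z)}$; your three-case check and your bound $\sum_{s\in\SSS_z}\mu^{(t)}_{\ggamma}(s)\,p^{(t)}_{\mu^{(t)}_{\ggamma}}(s'|s,a)\le 1$ on the coefficients spell out what the paper leaves implicit.
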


\begin{proof}
Fix some $z$ and $s'$.  We show that $\nabla_{\gamma^{(t)}_z} \mu^{(t+1)}_{\ggamma}(s')$ converges to zero, which is sufficient to establish the claim.  
We have that
\begin{align*}
\nabla_{\gamma^{(t)}_z} \mu^{(t+1)}_{\ggamma}(s') &= \sum_{s \in \SSS_z, a} \mu^{(t)}_{\ggamma}(s) \times \left(\nabla_{\gamma^{(t)}_z} \hat\pi_{\gamma^{(t)}_{z}}(a) \right)\times p^{(t)}_{\mu^{(t)}_{\ggamma}}(s'|s, a) \\
&= \sum_a \left(\sum_{s \in \SSS_z} \mu^{(t)}_{\ggamma}(s) \times p^{(t)}_{\mu^{(t)}_{\ggamma}}(s'|s, a) \right) \times \nabla_{\gamma^{(t)}_z} \hat\pi_{\gamma^{(t)}_{z(s)}}(a) \\
&= \sum_a \left(\sum_{s \in \SSS_z} \mu^{(t)}_{\ggamma}(s) \times p^{(t)}_{\mu^{(t)}_{\ggamma}}(s'|s, a) \right) \times 
\left[\hat \pi^{(t)}_{\gamma^{(t)}_z} (a) \times \left(e_a - \hat \pi^{(t)}_{\gamma^{(t)}_z} \right) \right],
\end{align*}
where the last equality follows from (\ref{eq:scoresoftmax}).
Since $\ttheta^*$ is a pure policy there exists an $a(z)$ such that $\hat \pi^{(t)}_{\gamma^{(t)}_z}$ converges to $e_{a(z)}$, and so 
the term in brackets converges to $1(a = a(z)) \times (e_a - e_{a(z)})$.  Since this expression equals zero for all $a$, the result now follows.   
\end{proof}

Having shown that the last term in (\ref{eq:summandFt1t2}) converges to zero, we now argue that the matrices in each of the three preceding expressions 
are bounded, which is sufficient to establish the claim.
We first argue that the matrices $G^{(t)}(\ggamma)$ are eventually bounded
as $\ttheta(\ggamma)\to\ttheta^*$.
Fix some time $t$, state $s$ with $\mu^{(t)}_{*}(s)>0$, and action $a$.
Eventually $\mu^{(t)}_{\ggamma}(s)>0$, and so under the conditions of the theorem the supports of
$p^{(t)}_{\mu^{(t)}_{\ggamma}}(s'|s,a)$ and $p^{(t)}_{\mu^{(t)}_{*}}(s'|s,a)$ are eventually identical.
Due to the smooth extension property,
both $p^{(t)}_\mu(s'|s,a)$ and $\nabla_\mu p^{(t)}_\mu(s'|s,a)$ are continuous in $\mu$ on a neighborhood of $\mu^{(t)}_{*}$ within the probability simplex.
Since $\mu^{(t)}_{\ggamma} \to \mu^{(t)}_{*}$, it then follows that
    \begin{equation} \label{eq:probboundperfthm}
  \lim_{\ttheta(\ggamma) \to \ttheta^*} \min_{(s,a,s') \in R^{(t)}_{\mu_{\ggamma}^{(t)}}} p^{(t)}_{\mu_{\ggamma}^{(t)}}(s'|s,a) =\min_{(s,a,s') \in R^{(t)}_{\mu_{*}^{(t)}}} p^{(t)}_{\mu_{*}^{(t)}}(s'|s,a) >0.
\end{equation}
and
      \begin{equation}
        \label{eq:probboundperfthm2}
  \lim_{\ttheta(\ggamma) \to \ttheta^*} \max_{(s,a,s') \in R^{(t)}_{\mu_{\ggamma}^{(t)}}} \nabla_\mu p^{(t)}_{\mu_{\ggamma}^{(t)}}(s'|s,a) =\max_{(s,a,s') \in R^{(t)}_{\mu_{*}^{(t)}}} \nabla_\mu p^{(t)}_{\mu_{*}^{(t)}}(s'|s,a) <\infty.
\end{equation}
Equations (\ref{eq:probboundperfthm}) and (\ref{eq:probboundperfthm2}) together imply that
\begin{equation*}
\lim_{\ttheta(\ggamma) \to \ttheta^*} \max_{(s,a,s') \in R^{(t)}_{\mu_{\ggamma}^{(t)}}} \left\| \nabla_\mu \log p^{(t)}_{\mu^{(t)}_{\ggamma}}(s'|s,a)\right\|  <\infty,
\end{equation*}
which implies that the $M^{(t)}(\ggamma)$ are eventually bounded.
The same argument shows that $\nabla_{\mu^{(t)}_{\ggamma}} \mu^{(t+1)}_{\ggamma}$ is bounded for all $t$.
This implies, as can be seen via a straightforward recursive argument working backwards through time, that the $G^{(t)}(\ggamma)$ are bounded, too.
It also shows that, for $t_1<t_2$, the matrix 
\begin{equation*}
\nabla_{\mu^{(t_1)}_{\ggamma}} \mu^{(t_2+1)}_{\ggamma} = (\nabla_{\mu^{(t_1)}_{\ggamma}} \mu^{(t_1+1)}_{\ggamma}) \times \cdots\times
 (\nabla_{\mu^{(t_2)}_{\ggamma}} \mu^{(t_2+1)}_{\ggamma})
\end{equation*}
is bounded. 

It remains to show that the matrix $\tilde F^{(t_1,t_1)} (\ggamma)^\dagger \times \nabla_{\gamma^{(t_1)}} \mu^{(t_1+1)}_{\ggamma}$ in (\ref{eq:summandFt1t2}) is bounded.
We claim that element $((z,a), s')$ of this matrix is
\begin{equation*}
  \overline p^{(t)}_{\mu^{(t)}_{\ggamma}}(s'|z,a)
  - \frac1{|\AAA|} \sum_{\tilde a}  \overline p^{(t)}_{\mu^{(t)}_{\ggamma}}(s'|z,\tilde a),
\end{equation*}
where 
\begin{equation*}
  \overline p^{(t)}_{\mu}(s'|z,a) =
\begin{cases}
  \sum_{s\in\SSS_z} \mu(s|\SSS_z) \times p^{(t)}_\mu(s'|s,a) &\text{if } \mu(\SSS_z)>0\\
  0& \text{otherwise.}
  \end{cases}
\end{equation*}
The argument is almost identical to the main argument of Proposition~\ref{prop:naturalgradientmoesoft}. 
In lieu of the policy gradient theorem, the starting point is the identity
\begin{equation*}
\nabla_{\gamma^{(t)}} \mu^{(t+1)}_{\ggamma}(s') = \sum_{s, a} \mu^{(t)}_{\ggamma}(s) \times \left(\nabla_{\gamma^{(t)}} \hat\pi_{\gamma^{(t)}_{z(s)}}(a) \right)\times p^{(t)}_{\mu^{(t)}_{\ggamma}}(s'|s, a).
\end{equation*}
The rest of the argument goes through verbatim with $p^{(t)}_{\mu^{(t)}_{\ggamma}}(s'|s, a)$ playing the role of $Q_{\mu^{(t)}_{\ggamma},\sigma^{(t+1)}_{\ggamma}}^{(t)}(s, a)$.

\section{Efficient Gradient Calculation for QPLEX Models}
\label{app:effcalc}
This appendix shows how the policy gradient can be calculated efficiently
due to the structure underlying QPLEX models, specifically due to the structure of states and by leveraging the sum-product algorithm.
We work with a more general setting than the dynamic pricing example,
which covers a variety of QPLEX models:
in the language of \cite{qplexbook}, we consider simple transition dynamics
with a single label.
More general models can be worked out similarly.

To accommodate this generality, the terms $\min(z,n) $ and $\min(z',n)$
in the definition of $q(k'|z,d,z')$ in (\ref{eq:qk'}) need to be replaced by $x(z)$
and $x(z')$, respectively, where $x$ is a so-called \emph{size function}
\cite{qplexbook}.
We also allow a general \emph{routing pmf} $\rho^{(t)}(z'|z,d,a)$.
This more general setting does not affect the calculations below.

\subsection{State Structure}
As a consequence of the separability of the local reward 
(\ref{eq:rewardsep}) and the kernel structure in (\ref{eq:kernelqplex}),
the $Q$-function takes the form, 
\begin{align*}
Q_{\mu, \sigma'}^{(t)}(s, a) &= r_\mu^{(t)}(s, a) + \sum_{s'} p^{(t)}_\mu(s' | s, a) \times \sigma'(s') \\
&= c^{(t)}_\mu +\hat r_{\mu_{|z}}^{(t)}(z, a) + \sum_{z',\ell'} \hat p^{(t)}_{\mu_{|z}} (z',\ell' | z, a) \times \sigma'(z',\ell') \\
&= c^{(t)}_\mu +\hat Q^{(t)}_{\mu_{|z}, \sigma'}(z, a),
\end{align*}
where $\mu_{|z}$ should be interpreted as some fixed pmf $\xi_0$ on $\LLL$ if $\mu(z)=0$ and
\begin{equation} \label{eq:hatQdef}
\hat Q^{(t)}_{\xi, \sigma'}(z, a) = \hat r_{\xi}^{(t)}(z, a) + \sum_{z',\ell'} \hat p^{(t)}_{\xi} (z',\ell' | z, a) \times \sigma'(z',\ell').
\end{equation}

For a differentiable function $h$ of a pmf $\xi$ on $\LLL$, 
we write $\nabla^{\ctrd}_\xi h(\xi)$ for the mean-centered gradient $(I-\bm 1 \times \xi^\top)\times \nabla_\xi h(\xi)$ and denote element $\ell$ of $\nabla^{\ctrd}_\xi h(\xi)$ by $\frac{\partial^{\ctrd}}{\partial\xi(\ell)} h(\xi)$.
That is,
\begin{equation*}
  \nabla^{\ctrd}_\xi h(\xi) = \nabla_\xi h(\xi) - \bm 1 \times
  \sum_\ell \xi(\ell) \times \frac{\partial}{\partial \xi(\ell)} h(\xi).
  \end{equation*}
  
\begin{lemma}
  \label{lem:conditionalgrad}
Let $h$ be a differentiable real-valued function of a pmf $\xi$ over $\LLL$.  
Fix some $\mu$ and $\tilde z$ such that $\mu(\tilde z) > 0$. Then
  \begin{equation*}
    \frac{\partial}{\partial\mu(z,\ell)} h(\mu_{|\tilde z}) = \frac{1(z=\tilde z)}{\mu(\tilde z)}  \times  \frac{\partial^{\ctrd}}{\partial\xi(\ell)} h(\mu_{|\tilde z}).  
    \end{equation*}
  \end{lemma}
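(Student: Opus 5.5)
The plan is a direct chain-rule computation. We regard $h(\mu_{|\tilde z})$ as the composition of $h$ with the map $\mu \mapsto \mu_{|\tilde z}$, where $\mu_{|\tilde z}(\ell) = \mu(\tilde z,\ell)/\mu(\tilde z)$ and $\mu(\tilde z)=\sum_{\ell'}\mu(\tilde z,\ell')$. In line with the smooth extension convention, we extend this map to the open set of nonnegative vectors with $\mu(\tilde z)>0$. The formula $\mu(\tilde z,\ell)/\sum_{\ell'}\mu(\tilde z,\ell')$ is smooth there. The chain rule then gives
\begin{equation*}
\frac{\partial}{\partial\mu(z,\ell)} h(\mu_{|\tilde z}) = \sum_{\ell'} \frac{\partial h}{\partial \xi(\ell')}(\mu_{|\tilde z}) \times \frac{\partial \mu_{|\tilde z}(\ell')}{\partial \mu(z,\ell)}.
\end{equation*}

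Next we compute the Jacobian of the conditioning map. If $z\neq\tilde z$, then $\mu_{|\tilde z}$ does not depend on $\mu(z,\ell)$, so every partial derivative vanishes. This accounts for the indicator $1(z=\tilde z)$. If $z=\tilde z$, the quotient rule gives
\begin{equation*}
\frac{\partial \mu_{|\tilde z}(\ell')}{\partial \mu(\tilde z,\ell)} = \frac{1(\ell'=\ell)}{\mu(\tilde z)} - \frac{\mu(\tilde z,\ell')}{\mu(\tilde z)^2} = \frac{1}{\mu(\tilde z)}\bigl(1(\ell'=\ell) - \mu_{|\tilde z}(\ell')\bigr).
\end{equation*}

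Substituting this into the chain rule yields
\begin{equation*}
\frac{1}{\mu(\tilde z)}\Bigl(\frac{\partial h}{\partial\xi(\ell)}(\mu_{|\tilde z}) - \sum_{\ell'}\mu_{|\tilde z}(\ell')\frac{\partial h}{\partial\xi(\ell')}(\mu_{|\tilde z})\Bigr).
\end{equation*}
This is exactly $\frac{1}{\mu(\tilde z)}\frac{\partial^{\ctrd}}{\partial\xi(\ell)}h(\mu_{|\tilde z})$ by the definition of the mean-centered gradient with $\xi=\mu_{|\tilde z}$.

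The only delicate point is well-definedness. The partial derivatives of $h$ depend on the chosen smooth extension of $h$ off the simplex. However, changing the extension alters $\nabla_\xi h$ only by a multiple of $\bm 1$, since it is unique up to the normal direction of the affine hull. The centering operator $I-\bm 1\times\xi^\top$ annihilates $\bm 1$ because $\xi^\top\bm 1=1$, so the right-hand side is independent of the extension. The left-hand side is likewise determined by the composition, which is consistent with this. I expect no real obstacle beyond stating this remark.
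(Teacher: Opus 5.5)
Your proposal is correct and follows essentially the same route as the paper: compute the Jacobian of the conditioning map $\mu \mapsto \mu_{|\tilde z}$, which vanishes for $z \neq \tilde z$, then apply the chain rule to obtain the mean-centered gradient. Your added remark that the centering operator annihilates the $\bm 1$-direction ambiguity from the choice of smooth extension is a sound supplement that the paper leaves implicit.
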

  \begin{proof}
We start from
    \begin{equation*}
  \frac\partial{\partial \mu(z,\ell)}
  \mu_{|\tilde z}(\tilde \ell) =
  \frac\partial{\partial \mu(z,\ell)}
  \frac{\mu(\tilde z, \tilde \ell)}{\sum_{\bar\ell} \mu(\tilde z, \bar\ell)} =
  \begin{cases}
    \frac{1(\tilde \ell=\ell) - \mu_{|z}(\tilde \ell)}{\mu(\tilde z)}
    & z=\tilde z\\
    0 & z\neq \tilde z,
    \end{cases}
  \end{equation*}
  which shows that we can focus on $z = \tilde z$. The chain rule then yields
  \begin{align*}
    \frac\partial{\partial \mu(\tilde z,\ell)} h(\mu_{|\tilde z})
    &= \frac{1}{\mu(\tilde z)}\times
      \sum_{\tilde \ell} (1(\tilde \ell=\ell) - \mu_{|\tilde z}(\tilde \ell)) \times \frac\partial{\partial \xi(\tilde \ell)} h(\mu_{|\tilde z})\\
    &= \frac{1}{\mu(\tilde z)}\times\left[ \frac\partial{\partial \xi( \ell)} h(\mu_{|\tilde z})-
      \sum_{\tilde \ell} \mu_{|\tilde z}(\tilde \ell) \times \frac\partial{\partial \xi(\tilde \ell)} h(\mu_{|\tilde z})\right],
    \end{align*}
as required.
    \end{proof}

Writing $\mu^{(t)}_{\ttheta, |z}$ for the conditional pmf of the label given the counter $z$ under $\mu^{(t)}_{\ttheta}$,
the updating rules (\ref{eq:expertforward}) and (\ref{eq:expertbackward}) become
\begin{equation*}
\begin{aligned}
  \mu^{(0)}_{\ttheta}(z',\ell')&=\mu^{(0)}(z',\ell')\\
  \mu^{(t+1)}_{\ttheta}(z',\ell') &= \sum_{z,a} \mu^{(t)}_{\ttheta}(z) \times \theta^{(t)}_z(a) \times \hat p^{(t)}_{\mu^{(t)}_{\ttheta, |z}} (z',\ell'| z, a),
\end{aligned}
\end{equation*}
and using Lemma~\ref{lem:conditionalgrad} we have that
\begin{equation}
  \label{eq:sigmaqplex}
  \begin{aligned}
    \sigma^{(T)}_{\ttheta}(z,\ell)&=c^{(T)}_{\mu^{(T)}_{\ttheta}}+ \hat r^{(T)}_{\mu^{(T)}_{\ttheta, | z}}(z)+
    \frac{\partial}{\partial\mu(z,\ell)} c^{(T)}_{\mu^{(T)}_{\ttheta}}  + 
    \frac{\partial^{\ctrd}}{\partial \xi(\ell)} \hat r^{(T)}_{\mu^{(T)}_{\ttheta, |z}}(z)\\
     \sigma^{(t)}_{\ttheta}(z,\ell) &=c^{(t)}_{\mu^{(t)}_{\ttheta}} +
\sum_a \theta^{(t)}_z (a) \times \hat Q^{(t)}_{\mu^{(t)}_{\ttheta, |z}, \sigma^{(t+1)}_{\ttheta}}(z, a) + \frac{\partial}{\partial\mu(z,\ell)} c^{(t)}_{\mu^{(t)}_{\ttheta}} + 
\sum_a \theta^{(t)}_{z} (a) \times \frac{\partial^{\ctrd}}{\partial \xi(\ell)}
\hat Q^{(t)}_{\mu^{(t)}_{\ttheta, |z}, \sigma^{(t+1)}_{\ttheta}}(z, a),
\end{aligned}
\end{equation}
again interpreting $\mu^{(t)}_{\ttheta,|z}$ as $\xi_0$ if $\mu^{(t)}_{\ttheta}(z)=0$.
Thus, in the generic forward iterative scheme (\ref{eq:expertforward}),
the sum over $s=(z,\ell)$ becomes a sum over $z$.
In the generic backward iterative scheme (\ref{eq:expertbackward}),
the sum over $\tilde s=(\tilde z,\tilde \ell)$ is completely eliminated.

The state structure also provides benefits for the algorithm,
specifically for calculating the $\overline Q^{(t)}_{z,\ttheta}$ defined in (\ref{eq:barQ}).
Since all values of $Q^{(t)}_{\mu,\sigma'}(s,a)$ are identical for $s\in \SSS_z$, $Q^{(t)}_{\mu^{(t)}_{\ttheta},\sigma^{(t+1)}_{\ttheta}}(s,a)$ can be pulled out of the sum and the resulting sum of $\mu^{(t)}(s|\SSS_z)$ over $s\in\SSS_z$ is trivially 1.
  (Moreover, since any constant can be added to $\overline Q^{(t)}_{z,\ttheta}$ in the updating rule (\ref{eq:expupdatepi}), the constant terms $c^{(t)}_\mu$ do not play a role in the algorithm except through the $\sigma^{(t)}_{\ttheta}$. This does not result in substantial savings.)

\subsection{Sum-Product Structure}
We show how to exploit the fact that the policy gradient can be written as a sum of product terms, so that the sum-product algorithm can be used for efficient calculation. 
We implement this algorithm implicitly through the algebraic definitions and summations but omit the explicit message-passing formulation. The algebraic manipulations we obtain are equivalent to sum-product message passing.
Throughout, let $\vartheta$ be a pmf on $\AAA$ and $\xi$ be a pmf on $\LLL$.

There are four terms created by substituting the expression (\ref{eq:hatQdef}) for $\hat  Q^{(t)}_{\xi, \sigma'}$ in (\ref{eq:sigmaqplex}).
Calculating two of these terms, 
$\sum_{a} \vartheta(a) \times\hat r_{\xi}(z,a) $ and $\sum_{a} \vartheta(a) \times \nabla_\xi^{\ctrd} \hat r_{\xi}(z,a) $, are problem-specific,
and so we only discuss how to efficiently calculate
the remaining two terms
  \begin{equation}
    \label{eq:effcalc1}
    \sum_{a,z',\ell'} \vartheta(a)  \times
    \hat p^{(t)}_\xi(z',\ell'|z,a) \times\sigma'(z',\ell')
  \end{equation}
  and
  \begin{equation}
    \label{eq:effcalc2}
    \sum_{a,z',\ell'} \vartheta(a)  \times
    \nabla_\xi^{\ctrd} \hat p^{(t)}_\xi(z',\ell'|z,a) \times\sigma'(z',\ell')
    \end{equation}
by exploiting the specific form of 
\begin{equation*}
\hat p^{(t)}_{\xi}(z', \ell' | z, a) = \sum_{d} q_{\xi}(d | z) \times \rho^{(t)}(z'|z, d, a) \times \sum_{k'} q(k' | z, d, z') \times q_{\xi}(\ell' | z, k')
\end{equation*}
from the QPLEX calculus. 
If $\xi(1) = 1$, then $q(\old | z, d, z') = 0$ and $q_{\xi}(\ell' | z, \old)$ is undefined, in which case the product of these terms should be interpreted as zero.

We begin with (\ref{eq:effcalc1}). 
After defining
\begin{equation*}
q^{(t)}_{\vartheta}(z'|z,d) = \sum_a \vartheta(a) \times \rho^{(t)}(z'|z,d,a),
\end{equation*}
the forward quantity
\begin{equation*}
f^{(t)}_{\xi,\vartheta}(z,z',k') =\sum_d q_{\xi}(d|z) \times q^{(t)}_{\vartheta}(z'|z,d) \times q(k'|z,d,z'),
\end{equation*}
and the backward quantity
\begin{equation*}
b_{\xi,\sigma'} (z,z',k') = \sum_{\ell'} q_{\xi}(\ell'|z,k')\times \sigma'(z',\ell'),
\end{equation*}
the sum-product algorithm leads to the following efficient representation:
\begin{equation}
  \label{eq:effcalc3}
  \sum_{a,z',\ell'} \vartheta(a)  \times \hat p^{(t)}_\xi(z', \ell'|z,a) \times \sigma'(z',\ell')=
\sum_{z',k'}  f^{(t)}_{\xi,\vartheta}(z,z',k') \times b_{\xi,\sigma'} (z,z',k'),
\end{equation}
where it is understood that the sum over $k'$ only involves $k' = \new$ if $\xi(1) = 1$.

We now turn to (\ref{eq:effcalc2}), for which we apply $\nabla^{\ctrd}_\xi$ to (\ref{eq:effcalc3}).
Define
  \begin{equation*}
    \hat f^{(t)}_{\xi,\vartheta}(z,\ell,z',k') =
    \frac{\partial^{\ctrd}} {\partial \xi(\ell)} f^{(t)}_{\xi,\vartheta}(z,z',k')
    \end{equation*}
and
      \begin{equation*}
\hat b_{\xi,\sigma'} (z,\ell, z') = \frac{\partial^{\ctrd}}{\partial \xi(\ell)} b_{\xi,\sigma'} (z,z',\old).
\end{equation*}
We thus obtain from the chain rule that
\begin{multline*}
\sum_{a,z',\ell'} \vartheta(a)  \times\frac{\partial^{\ctrd}}{\partial\xi(\ell)} \hat p^{(t)}_\xi(z', \ell'|z,a) \times \sigma'(z',\ell') \\= \sum_{z',k'} \hat f^{(t)}_{\xi,\vartheta}(z,\ell,z',k') \times b_{\xi,\sigma'} (z,z',k') + \sum_{z'}  f^{(t)}_{\xi,\vartheta}(z,z',\old) \times \hat b_{\xi,\sigma'} (z,\ell,z'), 
\end{multline*}
since the gradient  $\nabla_\xi b_{\xi,\sigma'} (z,z',k')$ is only nonzero for $k'=\old$.

First consider $\hat f^{(t)}_{\xi,\vartheta}(z,\ell,z',k') $. 
It follows from (\ref{eq:qxidgivenz}) that $\frac{\partial}{\partial \xi(\ell)} q_{\xi}(d|z) = 0$ if $\ell > 1$,
and so
\begin{equation*}
\hat f^{(t)}_{\xi,\vartheta}(z,\ell,z',k') = (1(\ell=1) -\xi(1)) \times 
\sum_d \frac{\partial}{\partial \xi(1)} q_{\xi}(d|z) \times q^{(t)}_{\vartheta}(z'|z,d) \times q(k'|z,d,z').
\end{equation*}
If $x(z) = 0$, then $q_{\xi}(d|z) = 1$, and so $\hat f^{(t)}_{\xi,\vartheta}(z,\ell,z',k') = 0$. Assuming $x(z) \ge 1$,
\begin{equation*}
  \frac{\partial}{\partial \xi(1)} q_{\xi}(d|z) =
\begin{cases}
-x(z) \times (1  - \xi(1))^{x(z)-1} & \text{if } d = 0 \\
x(z) \times \xi(1)^{x(z)-1} & \text{if } d = x(z) \\
1(\xi(1) > 0) \times \left( \frac{d}{\xi(1)} - \frac{x(z)-d}{(1 - \xi(1))} \right) \times q_{\xi}(d|z) & \text{if } 0 < d < x(z).
\end{cases}
\end{equation*}

          We next turn to $\hat b_{\xi,\sigma'} (z,\ell, z')$, which only is calculated when $\xi(1) < 1$. It follows from (\ref{eq:qxiell'givenzk'}) that
          \begin{equation*}
            \frac\partial{\partial \xi(\ell)} q_\xi (\ell'|z,\old) =
            \frac\partial{\partial \xi(\ell)} \frac{\xi(\ell'+1)}{1-\xi(1)} =
            \frac{1(\ell=1)\times \xi(\ell'+1)}{(1-\xi(1))^2}+\frac{1(\ell=\ell'+1)}{1-\xi(1)} ,
          \end{equation*}
and so
          \begin{align*}
            \frac{\partial^{\ctrd}}{\partial \xi(\ell)} q_\xi (\ell'|z,\old) &=
                                                                               \frac{(1(\ell=1)-\xi(1))\times \xi(\ell'+1)}{(1-\xi(1))^2} + \frac{1(\ell=\ell'+1)-\xi(\ell'+1)}{1-\xi(1)} \\
                                                                             &=   \frac{1(\ell=\ell'+1)}{1-\xi(1)}-\frac{1(\ell>1) \times \xi(\ell'+1)}{(1-\xi(1))^2} \\
&=            \frac {1(\ell>1)}{1-\xi(1)} \times 
[1(\ell=\ell'+1) - q_{\xi}(\ell'|z,\old)],
          \end{align*}
          where the last equality uses that $\ell'+1>1$.
We therefore obtain that
          \begin{align*}
            \hat b_{\xi,\sigma'} (z,\ell, z')
            &=  \sum_{\ell'} \frac{\partial^{\ctrd}}{\partial \xi(\ell)}
              q_{\xi}(\ell'|z,\old)\times \sigma'(z',\ell')\\
            &= \frac {1(\ell>1)}{1-\xi(1)} \times \sum_{\ell'} 
[1(\ell=\ell'+1) - q_{\xi}(\ell'|z,\old)]\times \sigma'(z',\ell') \\
&= \frac {1(\ell>1)}{1-\xi(1)} \times 
\left[\sigma'(z',\ell-1) - b_{\xi,\sigma'} (z,z',\old)\right].
          \end{align*}

In conclusion, collecting terms, 
\begin{multline*}
  \sigma^{(t)}_{\ttheta}(z,\ell)  = c^{(t)}_{\mu^{(t)}_{\ttheta}}+\frac{\partial}{\partial \mu(z,\ell)}c^{(t)}_{\mu^{(t)}_{\ttheta}}+\sum_a \theta^{(t)}_z(a)\times
  \left(\hat r^{(t)}_{\mu^{(t)}_{\ttheta, |z}}(z,a) +\frac{\partial^{\ctrd}}{\partial \xi(\ell)} \hat r^{(t)}_{\mu^{(t)}_{\ttheta, |z}}(z,a)\right) \\
  + \sum_{z',k'} \left(f^{(t)}_{\mu^{(t)}_{\ttheta, |z},\theta^{(t)}_z}(z,z',k') +\hat f^{(t)}_{\mu^{(t)}_{\ttheta, |z},\theta^{(t)}_z}(z,\ell,z',k') \right)\times b_{\mu^{(t)}_{\ttheta, |z},\sigma^{(t+1)}_{\ttheta}}(z,z',k')\\
+ \sum_{z'} f^{(t)}_{\mu^{(t)}_{\ttheta, |z},\theta^{(t)}_z}(z,z',\old)\times \hat b_{\mu^{(t)}_{\ttheta, |z},\sigma^{(t+1)}_{\ttheta}}(z,\ell,z').
\end{multline*}

  \printbibliography

  \end{document}